%% file: MimickingODE.tex
\documentclass[11pt,a4paper]{article}
\usepackage[margin=1.05in]{geometry}
\usepackage{graphicx}
\usepackage{amsmath,amssymb,amsthm,mathtools}
\usepackage{bm,hyperref}
\usepackage{tikz} \usetikzlibrary{arrows.meta,positioning,fit}
\theoremstyle{plain}

\newtheorem{theorem}{Theorem}[section]
\newtheorem{proposition}[theorem]{Proposition}
\newtheorem{lemma}[theorem]{Lemma}
\newtheorem{corollary}[theorem]{Corollary}

\theoremstyle{definition}
\newtheorem{definition}[theorem]{Definition}
\newtheorem{assumption}{Assumption}

\theoremstyle{remark}
\newtheorem{remark}[theorem]{Remark}

\theoremstyle{example}

\newcommand{\R}{\mathbb{R}}
\newcommand{\E}{\mathbb{E}}

\newcommand{\Law}{\operatorname{Law}}
\newcommand{\dd}{\mathrm{d}}
\newcommand{\Div}{\nabla\!\cdot\!}
\newcommand{\Leb}{\mathcal{L}^d}
\newcommand{\Cov}{\operatorname{Cov}}

\newcommand{\loc}{\mathrm{loc}}
\newcommand{\Ent}{\operatorname{Ent}}
\newcommand{\Fish}{\mathcal{I}}
\newcommand{\opnorm}[1]{\lVert #1 \rVert_{\mathrm{op}}}
\newcommand{\BV}{\mathrm{BV}}
\DeclareMathOperator{\supp}{supp}

\title{Mimicking diffusion processes with differential equations}
\author{Rama Cont\footnote{The author thanks Luigi Ambrosio and Gabriel Peyr\'e, whose insightful lectures at the Spring School on Mathematics of Random Systems (Pisa, 2025)  inspired this work, and RenYuan Xu for enlightening discussions on generative models.}}
\date{Mathematical Institute, University of Oxford}

\begin{document}
\maketitle
\vspace{-2.2\baselineskip}

\begin{abstract}
\noindent
The probability-flow ordinary differential equation (PF-ODE) associated with a diffusion process is widely used in score-based generative modeling as a deterministic sampler that reproduces the marginal distributions of the diffusion. The validity of this marginal-matching property  depends on the well-posedness of an ordinary differential equation whose velocity field is constructed from the score function of the diffusion. 
We examine the precise mathematical relation between a diffusion process, the Fokker-Planck equation and the associated probability-flow ODE (PF-ODE) under weak regularity assumptions on the drift and score function.

We first establish existence and uniqueness of the marginal density flow as a solution of the Fokker--Planck equation under minimal assumptions on the diffusion coefficients. We then study the corresponding Lagrangian problem using the DiPerna--Lions--Ambrosio theory of regular Lagrangian flows. We prove existence, uniqueness and stability of the PF-ODE flow, and show that it transports the initial distribution onto the diffusion marginals, under Sobolev or bounded-variation regularity of the score together with one-sided bounds on the divergence of the probability-flow velocity.
We identify sufficient conditions for the required regularity in diffusion models relevant for applications.

Our analysis underlines a fundamental distinction between Eulerian and Lagrangian descriptions. We construct a counterexample in which the Fokker--Planck equation has a unique density flow while the associated PF-ODE fails to admit a regular Lagrangian flow from the initial time, demonstrating that uniqueness of the density evolution does not in general imply the existence of a deterministic probability-flow representation. Finally, we derive stability estimates for probability-flow trajectories under learned score approximations. We discuss the implications of our findings for the training and deployment of generative diffusion models.
\end{abstract}
{\bf Keywords}: Score-based diffusion models; Diffusion processes;  Fokker–Planck equations;\\
DiPerna–Lions theory;  Kolmogorov equations; Linear transport equations;  generative models.\\
\ \\
\noindent
{\bf Mathematics Subject Classification}: 35Dxx 35Q84 60H10  49J52
 35J60 35Kxx 
\newpage
\tableofcontents
\newpage
\section{Mimicking diffusions with differential equations}\label{sec.intro}

\subsection{Generative diffusion models and continuous probability flows}\label{sec.generative}
Score-based diffusion models
\cite{SohlDickstein2015,Song2021mle,Song2021,yang2023} have become state-of-the-art generative models for image and video generation \cite{kim2024,shen2026}.
Score-based diffusion models  progressively add noise to a data set through a forward diffusion process, learn the score function $s(x,t)=\nabla\log p_t(x)$ associated with marginals, and then generate samples by applying time-reversed dynamics : either via a time-reversed stochastic differential equation or via a deterministic probability-flow ODE which transports a Gaussian distribution back onto the data distribution \cite{peyre2026}.

Score-based diffusion models use two ingredients \cite{Song2021,yang2023}: a stochastic differential equation (SDE)--the {\it forward diffusion process}-- to transform a complex data distribution $p_0$ to a reference (Gaussian) distribution $p_T$ by slowly injecting noise, and a corresponding time-reversed SDE which generates the reverse probability flow  $q_t=p_{T-t}$, thus transforming the reference distribution back into the data distribution. Crucially, the time-reversed SDE depends only on the score function $\nabla \log p_t(x)$ of the forward diffusion \cite{anderson1982,haussmann1986}. By estimating the score function with a neural network, one can then  generate samples from the data distribution by simulating the time-reversed SDE.

One of the features underlying the computational efficiency of these models is the idea, used by Song et al. \cite{Song2021}, that one may sample from the  marginal densities $(p_t)_{t\in [0,T]}$ of a diffusion process by first sampling from the initial distribution $p_0$ then transporting the initial sample via an  ordinary differential equation (ODE), called the {\it probability-flow ODE} (PF-ODE).

Indeed, consider a  diffusion process in $\mathbb{R}^d$ driven by a $d-$dimensional Brownian motion $W$:
$$ dX_t= f(X_t,t) dt + \sigma(t) dW_t,\qquad X_0\sim \mu_0 $$
with (scalar) diffusion coefficient $\sigma:[0,T]\mapsto (0,\infty)$ and initial distribution $\mu_0\in {\cal P}(\mathbb{R}^d)$. Assume $X_t$  has a strictly positive density $p_t(x)$ on $\mathbb{R}^d$ for $t>0$. 
The density $p_t(x)$ solves  the Fokker--Planck / Kolmogorov forward equation
on $\R^d\times(0,T)$:
\begin{equation}\label{eq.fokkerplanck}
  \partial_t p_t(x)+\Div(f(.,t) p_t)(x)-\tfrac12\sigma(t)^2\Delta p_t(x)=0.
\end{equation}
Song et al \cite{Song2021} considered the  ordinary differential equation:
\begin{equation}\label{eq.pfode}
  \tfrac{\dd}{\dd t}Z(t,x)=v\big(Z(t,x),t\big),\quad Z(0,x)=x,\quad{\rm where}\quad v(x,t):=f(x,t)-\tfrac12\sigma(t)^2\nabla\log p_t(x).
\end{equation}
Assuming this ODE is well-posed, if we initialize it with a random initial condition $Z_0$ with density $p_0$ then the density $p_t$ of $Z(t,Z_0)$  evolves according to the {\it continuity equation}
\begin{eqnarray}\label{eq.cont}
 \partial_t p_t +\Div(p_t\ v)=0
\end{eqnarray}
which describes the 'probability flow' associated with the ODE \eqref{eq.pfode}.

Using $\Delta p_t=\Div\ (p_t\nabla\log p_t)$, we observe that the continuity equation \eqref{eq.cont} is formally identical to the Fokker-Planck equation \eqref{eq.fokkerplanck}. 
So, if the ODE \eqref{eq.pfode} is well-posed and the PDEs \eqref{eq.fokkerplanck} and \eqref{eq.cont} have unique solutions, then by  initializing the ODE  with a random initial condition $X_0\sim \mu_0$ we obtain a solution $Z_t=Z(t,X_0)$ which has the {\it same distribution} as $X_t$: $${\rm Law}(Z(t,X_0) )={\rm Law}(X_t).$$
Thus the ODE \ref{eq.pfode} with initial condition $X_0$ {\it mimicks} the flow of marginal distributions of the diffusion process $X$: \eqref{eq.pfode} is called the  'probability flow ODE' (\emph{PF-ODE}) associated with the diffusion $X$.

This  {\it probability-flow ODE} (PF-ODE) may thus be used to generate samples from the distribution $(p_t)$ of $X_t$ (or the reverse flow $q_t=p_{T-t}$) {\it without} additional random simulation,  by sampling $Z_{t_0}\sim p_{t_0}$ then transporting it along the PF-ODE \eqref{eq.pfode} either forwards or backwards.
The probability-flow ODE has become the  sampler of choice in diffusion models, as it easier to simulate and  leads to exact likelihood evaluation and deterministic decoding \cite{Song2021mle}. 

The idea of "mimicking" the marginal probability flow of  a stochastic process with a simpler, analytically tractable  process goes back to Gy\"ongy
\cite{gyongy1986}, who proposed a method for mimicking the marginal flow of an Ito process with a Markovian-type SDE. Here the idea is taken one step further, namely mimicking the marginal flow of a diffusion process with a {\it deterministic} differential equation.

The validity of  this marginal-matching property, often assumed in the literature on generative models, depends on the well-posedness of the nonlinear differential equation \eqref{eq.pfode}. Even when $f$ is Lipschitz-continuous and $\sigma$ bounded, the vector field $v$ may not always satisfy the Cauchy-Lipschitz assumptions for well-posedness of the ODE \eqref{eq.pfode}, as it involves regularity of the score function $s(x,t)=\nabla\log p_t$. 
The relations between the PDE, the probability flow and the ODE thus need to be qualified and only hold under certain conditions. 

\subsection{Contributions}

Inspired by the questions arising in the context of generative diffusion models, we examine the precise mathematical relation between the diffusion process $X$, the associated Fokker-Planck equation \eqref{eq.fokkerplanck}  and the probability-flow ODE (PF-ODE) \eqref{eq.pfode}, under weak regularity assumptions on the drift and the score function.
We distinguish  two complementary viewpoints:
\begin{itemize}
\item the \emph{Eulerian} representation: Is the density flow $(p_t)$ of $X_t$ the unique solution of
the Fokker-Planck equation \eqref{eq.fokkerplanck} in a natural class? This is a question about the linear parabolic PDE \eqref{eq.fokkerplanck} with possibly irregular coefficients, which we tackle in Section \ref{sec.euler}.
\item the  \emph{Lagrangian} representation: Does the ODE \eqref{eq.pfode} generate a well-defined flow
$Z$, and does this flow transport $p_0$ onto $p_T$? This is a question about the first-order ODE \eqref{eq.pfode} and the associated transport equation \eqref{eq.cont}. We tackle this question in Section \ref{sec.lagrange} using the DiPerna--Lions theory \cite{DL1989,Amb2004} of regular Lagrangian flows and its extension by Ambrosio \cite{Amb2004,Ambrosio2008}. 
\end{itemize}
In particular we examine whether, as implicitly assumed in many studies on diffusion models, these representations are interchangeable, or equivalent, ways of generating a desired probability flow.
This requires to understand   in detail the mathematical relation between the SDE, its marginal flow, the PF-ODE and the continuity equation. 

Our analysis combines two methodologies: the well-posedness of Fokker–Planck equations with irregular coefficients \cite{Fig2008,LBL2008,LBL2019} and the DiPerna–Lions–Ambrosio theory of regular Lagrangian flows \cite{DL1989,Amb2004,Ambrosio2008}. This allows us to formulate the probability-flow ODE under minimal regularity assumptions and to identify the role of each assumption.
The counterexample in Section 
\ref{sec:counterexample} provides further insight into the role of various assumptions.

Replacing the score function $\nabla\log p_t$ by a learned (neural network) approximation $s_\theta$ gives an estimator $v_\theta$.
A question of interest is to quantify  the impact $\|Z_\theta-Z \|$ of this estimation error
 for the flow. Such quantitative error estimates are the focus of Section~\ref{sec:stability}. Theorem~\ref{thm:cdl} studies 
the impact of the estimation error on the flow and Proposition~\ref{prop:transfer} examines the relation with  the
score-matching objective function. Importantly, our error estimates require conditions on the design of the estimator but not on the loss function or the training procedure. These results complement recent work by Han et al. \cite{hanxu2024} on error estimates for score-based diffusion models.

\subsection{Relation with previous research} 

Recent theoretical work \cite{BrigatiPedrotti2025,iske2026,mooney2025global,Stephanovitch2026Lipschitz,Stephanovitch2025ScoreRegularity} has established a rigorous understanding of probability-flow dynamics in the Cauchy-Lipschitz regime where the coefficients and score satisfy strong regularity conditions. 

Mooney, Wang, Xin, and Yu \cite{mooney2025global} derive  Lipschitz estimates for diffusion scores for a broad class of initial distribution and use local Lipschitz control to prove global well-posedness and convergence of score-based generative dynamics. Their analysis also identifies  deterioration of score-Lipschitz bounds for non-log-concave distributions and an $O(1/t)$ regularization rate for nonsmooth measures supported on compact smooth manifolds with boundary. St\'ephanovitch \cite{Stephanovitch2026Lipschitz} obtains  Lipschitz and one-sided Lipschitz estimates for diffusion and flow-matching vector fields under weak log-concavity assumptions, leading to  Lipschitz transports and quantitative sampling rates. Iske and Sch\"onlieb \cite{iske2026}  show that when the score has a uniformly bounded Hessian the probability-flow ODE generates a bi-Lipschitz diffeomorphism, and use this structure to study the expressivity of bi-Lipschitz normalizing flows. Brigati and Pedrotti \cite{BrigatiPedrotti2025} derive explicit log-Hessian bounds for log-Lipschitz perturbations of strongly log-concave measures, yielding  sufficient score-regularity  for diffusion dynamics, and show that arbitrarily strong tail decay alone does {\it not} guarantee uniform log-Hessian control. 

These results establish classical  well-posedness when quantitative score regularity is available. Our analysis is complementary:  we study probability-flow dynamics {\it beyond} the classical Lipschitz regime. Using regular Lagrangian-flow theory, we prove well-posedness under Sobolev and BV regularity, distinguish Eulerian density evolution from Lagrangian transport, and exhibit examples in which the density evolution is well defined although no regular Lagrangian probability flow exists from the initial time.

A complementary line of work \cite{hanxu2024,huang2025convergence} studies the statistical and computational aspects of score-based generative models, assuming that the score can be estimated with sufficient accuracy. Han, Razaviyayn and Xu \cite{hanxu2024} establish optimization and generalization guarantees for gradient-descent training of neural-network score estimators, proving $L^2$
 error bounds for learned scores. Huang, Huang, and Lin \cite{huang2025convergence} analyze the effect of score approximation and numerical discretization errors on probability-flow ODE sampling, deriving quantitative convergence guarantees in total variation under smoothness assumptions on the learned score. 
 Chen, Vanden-Eijnden and Xu \cite{chen2025} use the marginal-matching representation to optimize interpolation schedules by improving the Lipschitz conditioning of the resulting drift.
 
 These results quantify how statistical estimation and computational errors propagate through a  probability-flow dynamics, assumed to be well-posed. By contrast, our analysis addresses the underlying analytical question of when the probability-flow vector field itself generates a meaningful Lagrangian transport. Our stability results for learned score fields in Section \ref{sec:stability} are therefore complementary to these error analyses and go beyond the classical Cauchy-Lipschitz assumptions.


\subsection{Outline}

Section \ref{sec.setting} introduces the forward diffusion, the Fokker–Planck equation and the probability-flow ODE, together with the assumptions used in various results. Section \ref{sec.euler} treats the Eulerian problem: Theorem \ref{thm:eulerian} establishes existence and uniqueness of the density flow under weak assumptions on the drift — requiring only a one-sided bound on the scalar divergence $\nabla\cdot{}f$ and no regularity of the score — and identifies it with the marginal flow of the SDE. Section \ref{sec.lagrange} turns to the Lagrangian problem: using the Ambrosio–DiPerna–Lions theory of regular Lagrangian flows, Theorem \ref{thm:lagrangian} shows that under the additional assumption (S) on the score, the PF-ODE \eqref{eq:pfode} generates a unique regular Lagrangian flow transporting $p_0$ onto $p_t$, and Corollary \ref{cor:reverse} identifies the stronger two-sided divergence bounds required for time reversal and invertibility. Section \ref{sec:verify} provides verifiable sufficient conditions for the regularity of the score function: strongly log-concave initial data with linear drift (Proposition \ref{prop:linear}), compactly supported — possibly singular — initial laws after early stopping (Proposition \ref{prop:earlystop}), and nonlinear gradient drifts under a confinement condition (Proposition 5.4). 

Section \ref{sec:counterexample} presents a counterexample for which the density flow is uniquely determined by Theorem \ref{thm:eulerian} yet the PF-ODE admits no regular Lagrangian flow from time zero and the continuity equation fails to be solvable for a range of initial conditions. The mechanism is a $1/t$ blow-up of the score outside the initial support, which separates assumption (S4) from its Fisher-information relaxation (S4'). Section \ref{sec:stability} studies the deterministic sampler with a learned score: Theorem \ref{thm:cdl} gives a quantitative log-Lipschitz stability estimate for the flow in terms of the $L^1$ velocity error, Proposition  \ref{prop:transfer} relates this error to the score-matching loss through an inverse-density factor, and Proposition \ref{prop:gronwall} gives a linear-rate alternative for Lipschitz scores. Section \ref{sec.discussion} discusses implications  of our results for generative modelling. 

\input{PFODE}

\input{MimickingODE.bbl}
\newpage
\appendix
\section{Proof of Lemma \ref{lem:fisher}} \label{app.fisher}
\input{appendices}

\end{document}

%% file: PFODE.tex
\section{Definitions and assumptions} \label{sec.setting}
Let $T>0$, $d\ge1$, and $\Leb$ be the Lebesgue measure on $\R^d$. We consider a diffusion process
\begin{equation}\label{eq:sde}
  \dd X_t=f(X_t,t)\,\dd t+\sigma(t)\,\dd W_t,\qquad t\in[0,T],\qquad X_0\sim \mu_0\in {\cal P}(\mathbb{R}^d),
\end{equation}
with $\sigma:[0,T]\to(0,\infty)$ scalar and no spatial dependence. 
When the SDE \eqref{eq:sde} has a unique solution, we will denote $p_t(x)$  the density of $X_t$.
The density $p_t(x)$ solves  the Fokker--Planck / Kolmogorov forward equation
on $\R^d\times(0,T)$:
\begin{equation}\label{eq:fp}
  \partial_t p_t(x)+\Div(f p_t)(x)-\tfrac12\sigma(t)^2\Delta p_t(x)=0.
\end{equation}
We associate with this diffusion process a 'probability flow differential equation' (\emph{PFODE}) \cite{Song2021}
\begin{eqnarray}\label{eq:pfode}
  \tfrac{\dd}{\dd s}Z(t,x)=v\big(Z(t,x),t\big)&\qquad Z(0,x)=x,
  \\
{\rm where}&\qquad  \boxed{\;v(x,t):=f(x,t)-\tfrac12\sigma(t)^2\nabla\log p_s(x).\;}\label{eq:vectorfield}
\end{eqnarray}
The {\it continuity} equation associated with the ODE \eqref{eq:pfode} is the linear transport equation
\begin{eqnarray}\label{eq:cont}
&\partial_t \rho+\Div(\rho v)=0,
\end{eqnarray}
As noted in Section \ref{sec.intro}, the interest in the particular choice of the vector field \eqref{eq:vectorfield} stems from the observation that, under certain assumptions (to be specified) the continuity equation \eqref{eq:cont} formally coincides with \eqref{eq:fp}.

Our goal is to study the mathematical relation between the {\it Eulerian} description of the marginal probability flow $(p_t)_{t\in [0,T]}$ of the diffusion \eqref{eq:sde} through its Fokker–Planck equation \eqref{eq:fp} and the {\it Lagrangian} description of the probability flow provided by the  ODE \eqref{eq:pfode}.

For the Eulerian problem,   under uniform ellipticity of the second-order term, uniqueness follows from a direct energy estimate in the class where
$\nabla p\in L^2$. The essential point is that testing the equation against the difference 
of two solutions $p^1,p^2$ produces $\int f\cdot\nabla(|p^1-p^2|^2)$, which after one integration by parts
involves $\Div f$ \emph{alone}: the full gradient $\nabla f$ and the score never appear. This places the
Eulerian problem inside the classical variational framework for parabolic equations with
irregular data \cite{LBL2019}.

For the Lagrangian problem, since
the vector field $v$ in \eqref{eq:vectorfield},  contains  the score function $s(x,t)=\nabla\log p_t$,
assumptions on the regularity of the score function  are  unavoidable. We will use Ambrosio's BV extension \cite{Amb2004,Ambrosio2008} to the DiPerna--Lions theory \cite{DL1989}, which will allow us to impose weak regularity assumptions on the coefficients involved.

We will thus consider various sets of assumptions in the following sections.
\begin{assumption}[Diffusion coefficient--- (E1), (E2)]\label{ass:E}\  \\
\begin{enumerate}
\item[(E1)] $\sigma\in L^\infty(0,T)$ and $0<\epsilon\le\sigma(s)\le\epsilon^{-1}$ for a.e.\ $s$.
\item[(E2)]  $\sigma\in C^{\alpha/2}([0,T])$ for some $\alpha\in(0,1)$.
\end{enumerate}
\end{assumption}
Theorem~\ref{thm:eulerian} uses only \textup{(E1)}; \textup{(E2)} is used in
Section~\ref{sec:verify} for parabolic regularity.
\begin{assumption}[Drift  (D)]\label{ass:D}\ \\
\begin{enumerate}
\item[(D0)] $f\in L^2_\loc\big([0,T]\times\R^d;\R^d\big)$;
\item[(D1)] $f\in L^1\big([0,T];\BV_\loc(\R^d;\R^d)\big)$.\item[(D2)] $\Div f\in L^1\big([0,T];L^1_\loc(\R^d)\big)$
  and $[\Div f]^-\in L^1\big([0,T];L^\infty(\R^d)\big)$;
\item[(D3)] $\dfrac{f}{1+|x|}\in L^1\big([0,T];L^1+L^\infty(\R^d)\big)$;
\end{enumerate}
We shall also need, for Theorem~\ref{thm:eulerian} and again in
Section~\ref{sec:stability}, a pointwise bound on $f$:
\begin{enumerate}
\item[(D4)] $|f(x,t)|\le C_f(t)\,(1+|x|)$ for a.e.\ $(x,t)$, with $C_f\in L^1(0,T)$.
\end{enumerate}
\textup{(D4)} implies \textup{(D3)} but not conversely: in \textup{(D3)} the component
$f_1$ with $\tfrac{|f_1|}{1+|x|}\in L^1_x$ may be unbounded on every ball. The distinction is
consequential; see Remark~\ref{rem:whyD3star}.
\end{assumption}
\begin{remark}
\textup{(D2)}  requires the absolutely continuity of the
\emph{distributional} divergence:
\begin{equation}\label{eq:divac}
  \sum_i\partial_if_i \quad = (\Div f)\ \Leb ,
\end{equation}
$\Div f$ then denotes its density; $[\,\Div f\,]^-$ is the negative part. This is a standing hypothesis  in
Ambrosio's $\BV$ theory \cite{Amb2004,Ambrosio2008}.  Condition
\eqref{eq:divac} is \emph{not} implied by \textup{(D1)}, as a $\BV_\loc$ field may have a
divergence with a nonzero singular part, and it may not be weakened to a hypothesis on
the absolutely continuous part alone. 
   
\end{remark}
 We will use different combinations of these assumptions. We use the notations
 \begin{itemize}
     \item \textup{(D$_{\mathrm E}$)} for \textup{(D0)}, \textup{(D2)}, \textup{(D3)};
     \item \textup{(D$_{\mathrm E}^+$)} for  \textup{(D0)}, \textup{(D2)},
\textup{(D4)}; and
\item  \textup{(D$_{\mathrm L}$)} for \textup{(D0)}, \textup{(D1)}, \textup{(D2)}, \textup{(D3)}.
 \end{itemize}

\textup{(D$_{\mathrm E}$)} imposes \emph{no} derivative on $f$ beyond the scalar
distribution $\Div f$; in particular $f$ need not be weakly differentiable. This will be  enough for studying the Eulerian problem in Section \ref{sec.euler} (Theorem \ref{thm:eulerian}). By contrast, the Lagrangian result in Section \ref{sec.lagrange} needs the full matrix 
$\nabla f$ to be defined as a measure. 

\begin{assumption}[Initial distribution $\mu_0$\  (I)]\label{ass:I}\ \\
\begin{enumerate}
\item[(I1)] $\mu_0$ has a probability density  $p_0\in L^1\cap L^\infty(\R^d)$.
\item[(I2)]  $\int|x|^2p_0\,\dd x<\infty$ and $\Ent(p_0):=\int p_0\log p_0\,\dd x<\infty$.
\end{enumerate}
\end{assumption}

\begin{assumption}[Score regularity: (S)]\label{ass:S}
The marginal flow $(p_t, t\in [0,T])$ of \eqref{eq:sde} satisfies
\begin{enumerate}
\item[(S1)] $p_t>0$ $\Leb$-a.e.\ for each $t$, $p\in L^\infty\big((0,T);L^1\cap L^\infty\big)$,
$\sup_{0\leq t\le T}\int|x|^2p_t(x)dx <\infty$, and $t\mapsto p_t\Leb$ is narrowly continuous;
\item[(S2)] $\nabla\log p_\cdot\in L^1\big((0,T);W^{1,1}_\loc(\R^d;\R^d)\big)$;
\item[(S3)] $\big\|[\Delta\log p_t]^+\big\|_{L^\infty}\le\Lambda(t)$ with $\Lambda\in L^1(0,T)$;
\item[(S4)] $|\nabla\log p_s(x)|\le A(s)(1+|x|)$ for a.e.\ $x$, with $A\in L^1(0,T)$.
\end{enumerate}
We also record the weaker alternative to \textup{(S4)}:
\begin{enumerate}
\item[(S4$'$)] $\displaystyle\int_0^T\Fish(p_s)\,\dd s<\infty$, where
$\Fish(p):=\int_{\R^d}p\,|\nabla\log p|^2\,\dd x$ is the Fisher information.
\end{enumerate}
\end{assumption}

\section{Eulerian viewpoint: uniqueness of the density flow}\label{sec.euler}

Define the energy class
\[
  \mathcal{X}:=\Big\{p\in L^\infty\big([0,T];L^1\cap L^\infty(\R^d)\big)\ :\
  \nabla p\in L^2\big([0,T];L^2(\R^d)\big)\Big\}.
\]
A function $p\in\mathcal X$ is a \emph{weak solution} of the Fokker-Planck-Kolmogorov  equation \eqref{eq:fp} with initial condition  $p_0$ if
for all $\varphi\in C_c^\infty([0,T)\times\R^d)$
\[
  \int_0^T\!\!\!\int_{\mathbb{R}^d} p\,\partial_t\varphi+\int_{\mathbb{R}^d} p_0\varphi(0,\cdot)
  =-\int_0^T\!\!\!\int_{\mathbb{R}^d} p\,\langle f,\nabla\varphi\rangle
  +\tfrac12\int_0^Tdt\ \!\!\!\int_{\mathbb{R}^d}\sigma(t)^2\langle\nabla p_t,\nabla\varphi\rangle .
\]

\begin{theorem}[Existence and uniqueness of the marginal flow]\label{thm:eulerian} \ 
\begin{enumerate}
\item[\textup{(a)}] \emph{(Uniqueness)} Under assumptions  \textup{(E1)}, \textup{(D$_{\mathrm E}$)} and \textup{(I1)}, there is at most one weak solution of the Fokker-Planck-Kolmogorov equation \eqref{eq:fp}
in $\mathcal X$ with initial condition $p_0$.
\item[\textup{(b1)}] \emph{(Existence of solutions and $L^2$ estimate)}
Under assumptions  \textup{(E1)}, \textup{(D$_{\mathrm E}^+$)}, and \textup{(I1)} there exists a weak solution $p\in \mathcal X$ of \eqref{eq:fp}
 with initial condition $p_0$, and it  satisfies
\begin{align}
  \sup_{s\le T}\|p_s\|_{L^2}^2&\;\le\;\|p_0\|_{L^2}^2\,e^{\theta},
  \label{eq:energysup}\\[2pt]
  \epsilon^2\!\int_0^T\!\|\nabla p_s\|_{L^2}^2\,\dd s
&\;\le\;\|p_0\|_{L^2}^2\,\big(1+\theta\,e^{\theta}\big),
  \label{eq:energydiss}
\end{align}
where 
$\theta:=\int_0^T\|[\Div f(\cdot,s)]^-\|_{L^\infty}\,\dd s$.
In particular:
\begin{equation}\label{eq:energyest}
  \sup_{s\le T}\|p_s\|_{L^2}^2+\epsilon^2\!\int_0^T\!\|\nabla p_s\|_{L^2}^2\,\dd s
  \;\le\;2\,\|p_0\|_{L^2}^2\,e^{2\theta}.
\end{equation}
\item[\textup{(b2)}] \emph{(Moments and Fisher information)}
If, in addition,  \textup{(I2)} is satisfied then $$\sup_{t\leq T}\int_{\mathbb{R}^d}\|x\|^2 p_t(x) dx <\infty \qquad {\rm and}\qquad \int_0^T I(p_t)dt <\infty $$
\item[\textup{(c)}] \emph{(Identification)} If $X$ is any weak solution of the stochastic differential equation \eqref{eq:sde}
with $X_0\sim p_0\Leb$ whose   marginals are absolutely continuous with densities in
$\mathcal X$, then the density flow of $X$ is the (unique) weak solution of the Fokker-Planck-Kolmogorov equation \eqref{eq:fp} with initial condition $p_0$.
\end{enumerate}
\end{theorem}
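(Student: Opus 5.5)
We prove the four parts in the order (a), (b1), (b2), (c). Throughout, the basic tool is the energy identity obtained by testing \eqref{eq:fp} against the solution itself. The only drift term that survives after integration by parts is $\int f\cdot\nabla(u^2/2)=-\tfrac12\int(\Div f)\,u^2$, which is controlled by $\|[\Div f]^-\|_{L^\infty}\int u^2$.

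\textbf{Uniqueness (a).} Let $p^1,p^2\in\mathcal X$ be two weak solutions with the same initial datum, and set $u=p^1-p^2\in\mathcal X$ with $u_0=0$. We cannot test with $u$ directly, because $u$ is neither smooth nor compactly supported and $f$ is only in $L^2_\loc$. We therefore use the test function $\varphi=\chi_R\,(u\ast\eta_\delta)$, Steklov-averaged in time. Here $\chi_R(x)=\chi(x/R)$ is a cutoff, and the regularization in time and space is then removed, which is the standard Lions--Magenes argument for $\partial_t u\in L^2(H^{-1})+\Div(fu)$. One has to make sense of the pairing $\int u f\cdot\nabla(\chi_R u)$. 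For this we use $u\in L^\infty\cap L^2(H^1)$, so $fu\nabla u\in L^1_\loc$ by (D0), and we commute the mollifier using a DiPerna--Lions type commutator. Note that we have $\nabla u\in L^2$, so the commutator $\Div(f u)\ast\eta_\delta-\Div(f(u\ast\eta_\delta))$ only needs $f\in L^2_\loc$ and $\Div f\in L^1_\loc$, because $u$ itself is Sobolev.

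With this in place we obtain
\[
\tfrac12\|\chi_R^{1/2}u_t\|_2^2+\tfrac{\epsilon^2}{2}\int_0^t\!\!\int\chi_R|\nabla u|^2\le \tfrac12\int_0^t\|[\Div f]^-\|_\infty\!\int\chi_R u^2+\mathrm{Err}_R .
\]
The error term $\mathrm{Err}_R$ collects the terms in which a derivative falls on $\chi_R$. These are $\int |u||\nabla u||\nabla\chi_R|$, which tends to $0$ since $u\in L^2(H^1)$, and $\int u^2|f||\nabla\chi_R|\lesssim\int_{R<|x|<2R}u^2\frac{|f|}{1+|x|}$. The second term tends to $0$ by (D3), using $u\in L^\infty(L^1\cap L^\infty)$: the $L^1$ part of $f/(1+|x|)$ is paired with $u^2\le\|u\|_\infty^2$, the $L^\infty$ part with $u^2\in L^1$, and dominated convergence applies on the annuli. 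Letting $R\to\infty$ and applying Gronwall gives $u\equiv0$.

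\textbf{Existence (b1).} We regularize by taking $f^n=(f\ast\eta_n)\chi_n$, smooth and compactly supported. This keeps (D4) uniformly, and $[\Div f^n]^-$ is bounded in $L^1(L^\infty)$ up to an error $f\cdot\nabla\chi_n$ of order $C_f(t)$, which is why (D4) rather than (D3) is needed here. We also mollify $p_0$. The classical solutions $p^n$ are nonnegative, conserve mass, and satisfy the maximum-principle bound $\|p^n_t\|_\infty\le\|p_0\|_\infty e^{\theta_n}$. The energy identity gives
\[
\tfrac{d}{dt}\|p^n\|_2^2+\sigma^2\|\nabla p^n\|_2^2\le\|[\Div f^n]^-\|_\infty\|p^n\|_2^2 .
\]
Gronwall yields \eqref{eq:energysup}. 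Integrating the same inequality in time and inserting the sup bound gives \eqref{eq:energydiss}, since $\int\|[\Div f]^-\|_\infty e^{\theta(s)}ds\le\theta e^\theta$; and \eqref{eq:energyest} follows from the first two. Weak-$*$ compactness in $\mathcal X$ then lets us pass to the limit in the weak formulation, the product $p^n f^n\to pf$ being handled by weak-strong convergence on compacts, and the estimates pass to the limit by lower semicontinuity.

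\textbf{Moments and Fisher information (b2).} For the moments we test with $|x|^2$, truncated. By (D4), $\frac{d}{dt}\int|x|^2p\le 2C_f\int(|x|+|x|^2)p+d\sigma^2$, and Gronwall closes the bound. For the Fisher information we take the entropy identity
\[
\frac{d}{dt}\int p\log p=-\tfrac{\sigma^2}{2}\Fish(p)+\int p\,\Div f ,
\]
proved on the approximations. The term $\int p\,\Div f\le$ is the delicate one: only its negative part is controlled, so we must bound $\int p[\Div f]^+$. I would instead use $\int f\cdot\nabla p$ together with Cauchy--Schwarz,
\[
\int|f||\nabla p|\le\Big(\int|f|^2p\Big)^{1/2}\Fish(p)^{1/2},
\]
where $\int|f|^2p\le 2C_f^2\int(1+|x|^2)p$. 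This requires $C_f\in L^2$, or alternatively absorbing into $\sigma^2\Fish/4$ with a $C_f^2$ weight. I expect this to be the main obstacle: with only $C_f\in L^1$ the absorption needs care, for instance by splitting time where $C_f$ is large. The entropy is bounded below by the second moment, which gives $\int_0^T\Fish<\infty$.

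\textbf{Identification (c).} By Itô's formula, the marginals of any weak solution $X$ satisfy the weak form of \eqref{eq:fp} with test functions in $C_c^\infty$; the integrability $\E\int|f(X_t,t)|dt<\infty$ holds on compacts because $p\in\mathcal X$ and $f\in L^2_\loc$. Since these densities lie in $\mathcal X$ by hypothesis, they coincide with the solution by part (a).
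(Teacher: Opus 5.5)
The genuine gap is in the Fisher-information half of (b2), and it comes from a sign error. Parts (a) and (c) are sound. In (a) your mollification and commutator argument works: $u\in H^1\cap L^\infty$ gives $\Div(fu)=u\,\Div f+f\cdot\nabla u$ in $L^1_\loc$, so the commutator vanishes trivially. The paper avoids this machinery: $fw\in L^2_\loc$ already puts $\partial_t w$ in $L^2(H^{-1}(B_{2R}))$, so Lions--Magenes applies to $\phi_R w$ directly.

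On smooth positive approximations the entropy identity is
\[
\frac{\dd}{\dd t}\int p\log p=\int f\cdot\nabla p-\frac{\sigma^2}{2}\Fish(p)=-\int p\,\Div f-\frac{\sigma^2}{2}\Fish(p),
\]
not $+\int p\,\Div f$. With the correct sign, the drift term is bounded by $\int p\,[\Div f]^-\le\|[\Div f]^-\|_{L^\infty}$ using unit mass. That is exactly the one-sided hypothesis (D2), and $[\Div f]^+$ never appears.

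Your substitute, Cauchy--Schwarz against $\int|f|^2p\lesssim C_f^2(1+m_2)$, does not close under the hypotheses. It leaves you with $\tfrac{\epsilon^2}{2}\int_0^T\Fish\le A+C\int_0^T C_f\,\Fish^{1/2}\,\dd t$. That inequality bounds $\int_0^T\Fish$ only if $C_f\in L^2(0,T)$, and no splitting of $[0,T]$ removes this requirement. So with $C_f\in L^1$ only, (b2) is unproved as written.

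Once the sign is fixed, you still need the steps of Appendix~\ref{app.fisher}:
\begin{itemize}
\item a regularised entropy such as $\beta_\eta(r)=(r+\eta)\log(r+\eta)-\eta\log\eta$, because $\log p^n$ is unbounded near $p^n=0$;
\item a logarithmic cut-off to kill the boundary terms produced by the linearly growing drift;
\item an upper bound on the initial entropy in terms of $\|p_0\|_{L^\infty}$, and the lower bound $\Ent(q)\ge -C_d\big(1+\int|x|^2q\big)$;
\item passage of $\int_0^T\Fish(p^n)$ to the limit by lower semicontinuity of the jointly convex integrand $(\mu,\xi)\mapsto|\xi|^2/\mu$. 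You omit this step entirely.
\end{itemize}

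A secondary defect is in (b1). The cut-off $f^n=(f\ast\eta_n)\chi_n$ adds $(f\ast\eta_n)\cdot\nabla\chi_n$ to $\Div f^n$. This term is $O(C_f(t))$ but does not vanish as $n\to\infty$. For example, $f(x)=x$ has $\theta=0$. Yet $\|[\Div f^n]^-\|_{L^\infty}$ stays bounded below independently of $n$, because $r^d\chi(r)$ must decrease somewhere on $[1,2]$. Gronwall on $p^n$ therefore gives \eqref{eq:energysup}--\eqref{eq:energyest} only with $\theta+C\|C_f\|_{L^1}$ in place of $\theta$, and lower semicontinuity does not restore the sharp constant. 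Any of three fixes works:
\begin{itemize}
\item do not truncate: under (D4) the mollified problem with smooth linear-growth drift is classically solvable, and $\|[\Div(f\ast\eta_n)]^-\|_\infty\le\|[\Div f]^-\|_\infty$ exactly (the paper's Step~7a);
\item use a logarithmic cut-off, so that $(1+|x|)|\nabla\chi_n|\to0$ uniformly;
\item re-run the localised energy argument of (a) on the limit $p\in\mathcal X$ (the paper's Step~6).
\end{itemize}
Your weak-$*$/strong passage to the limit in the flux $f^np^n$, by contrast, is correct. It is lighter than the paper's Aubin--Lions step, since the equation is linear in $p$ and $f^n\to f$ strongly in $L^1_\loc$.
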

No positivity, smoothness, log-concavity  is required for $p_0$ or $p_t$
and no  differentiability of $f$ beyond \textup{(D2)} is needed.

\begin{proof}
Uniqueness is a self-contained energy estimate; we give it in full, since it is the reason
the hypotheses can be as weak as they are.

\emph{Step 1: the difference of two solutions.} Let $p^1,p^2\in\mathcal X$ solve
\eqref{eq:fp} with the same initial condition  and put $w:=p^1-p^2\in\mathcal X$, so that $w(0)=0$ and $w$ is a weak solution of
\begin{equation}\label{eq:wdiff}
  \partial_sw+\Div(fw)-\tfrac12\sigma(s)^2\Delta w=0 .
\end{equation}
By interpolation $w\in L^\infty([0,T];L^2)$, and $\nabla w\in L^2([0,T];L^2)$.

\emph{Step 2: localised energy identity.} Fix $\phi\in C^\infty_c(\R^d)$ with
$0\le\phi\le1$, $\phi\equiv1$ on $B_1$, $\supp\phi\subset B_2$, and set
$\phi_R(x):=\phi(x/R)$. 
As
$w\in L^2_sH^1_x$ and, by \textup{(D0)} and $w\in L^\infty$, $fw\in L^2_\loc$, 
$\partial_tw\in L^2([0,T], H^{-1}(B_{2R}))$ and Lions--Magenes duality applies.
Testing \eqref{eq:wdiff} against $w\phi_R$ gives
\[
  \frac{\dd}{\dd s}\,\frac12\!\int\! w^2\phi_R
  +\frac{\sigma^2}{2}\!\int\!\phi_R|\nabla w|^2
  =\underbrace{\int \phi_R\,f\cdot w\nabla w}_{=:\mathrm{I}}
  +\underbrace{\int (f\cdot\nabla\phi_R)\,w^2}_{=:\mathrm{II}}
  -\underbrace{\frac{\sigma^2}{2}\!\int\! w\,\nabla w\cdot\nabla\phi_R}_{=:\mathrm{III}} .
\]

\emph{Step 3: the drift term  only depends on $\Div f$.}  Since
$w\in H^1_\loc\cap L^\infty$ we have $w\nabla w=\tfrac12\nabla(w^2)$ \emph{classically}, the
chain rule being available because membership of $\mathcal X$ already grants one weak
derivative --- no renormalisation is needed to justify it. Hence
$$\psi:=\varphi_Rw^2\in H^1_c(\mathbb R^d)\cap L^\infty(\mathbb R^d)\subset  W^{1,1}_c(\mathbb R^d).$$
The distributional-divergence identity, initially defined for
test functions in $C_c^\infty$, extends to this $\psi$ by
approximation. Indeed, one may choose
$\psi_n\in C_c^\infty(\mathbb R^d)$, supported in a fixed compact
set and satisfying
\[
\sup_n\|\psi_n\|_{L^\infty}\le \|\psi\|_{L^\infty},
\qquad
\psi_n\to\psi\ \text{in }H^1,
\qquad
\psi_n\to\psi\ \text{a.e.}
\]
Since $f\in L^2_{\mathrm{loc}}$, one has
\[
\int f\cdot\nabla\psi_n\to\int f\cdot\nabla\psi,
\]
while $\nabla\!\cdot f\in L^1_{\mathrm{loc}}$ and dominated
convergence give
\[
\int(\nabla\!\cdot f)\psi_n
\to
\int(\nabla\!\cdot f)\psi.
\]
Consequently,
\[
\int f\cdot\nabla\psi
=
-\int(\nabla\!\cdot f)\psi.
\]
Applying this identity with $\psi=\varphi_Rw^2$ gives
\[
I
=\frac12\int \varphi_R f\cdot\nabla(w^2)
=\frac12\int f\cdot\nabla(\varphi_Rw^2)
   -\frac12\int(f\cdot\nabla\varphi_R)w^2
=-\frac12\int(\nabla\!\cdot f)\varphi_Rw^2
   -\frac12\int(f\cdot\nabla\varphi_R)w^2.
\]
Note that only the scalar $\Div f$ is used, not the gradient $\nabla f$. It  is
paired with the bounded compactly supported function $\phi_Rw^2$. Two things are being used
here, and only the first is about $\nabla f$. First, the definition of the distributional
divergence gives $\int f\cdot\nabla\psi=-\langle\sum_i\partial_if_i,\psi\rangle$ with
$\psi=\phi_Rw^2$. Second --- and this is where \eqref{eq:divac} enters --- absolute
continuity of $\sum_i\partial_if_i$ lets us write that pairing as the Lebesgue integral
$\int(\Div f)\phi_Rw^2$. Without \eqref{eq:divac} the identity above acquires the extra term
$-\tfrac12\langle D^s\!f,\phi_Rw^2\rangle$, which has no sign, need not be absorbable, and
in general is not even well defined: $w^2$ is an $H^1_\loc\cap L^\infty$ function, specified
only up to $\Leb$-null sets, while $D^s\!f$ is carried by a $\Leb$-null set. Consequently
\[
  \mathrm{I}+\mathrm{II}
  \le\tfrac12\big\|[\Div f(\cdot,s)]^-\big\|_{L^\infty}\!\int w^2\phi_R
  \;+\;\tfrac12\Big|\int (f\cdot\nabla\phi_R)w^2\Big| .
\]

\emph{Step 4.} Write $f=f_1+f_2$ as in \textup{(D3)} with
$\tfrac{|f_1|}{1+|x|}\in L^1_sL^1_x$ and $\tfrac{|f_2|}{1+|x|}\in L^1_sL^\infty_x$. Since
$|\nabla\phi_R|\le\|\nabla\phi\|_\infty/R$ and $\nabla\phi_R$ is supported in
$\{R\le|x|\le2R\}$, where $1+|x|\le 1+2R\le 3R$,
\[
  \Big|\int (f\cdot\nabla\phi_R)w^2\Big|
  \;\lesssim\;\|w\|^2_{L^\infty}\!\!\int_{|x|\ge R}\!\frac{|f_1|}{1+|x|}
  \;+\;\Big\|\tfrac{|f_2|}{1+|x|}\Big\|_{L^\infty}\!\!\int_{|x|\ge R}\!w^2
  \;\xrightarrow[R\to\infty]{}\;0,
\]
both terms by dominated convergence. Likewise
$|\mathrm{III}|\le\tfrac{\epsilon^{-2}\|\nabla\phi\|_\infty}{2R}\|w\|_{L^2}\|\nabla w\|_{L^2}\to0$.

\emph{Step 5: Gr\"onwall.} Letting $R\to\infty$ and using $\sigma\ge\epsilon$,
\[
  \frac{\dd}{\dd s}\,\frac12\|w_s\|_{L^2}^2+\frac{\epsilon^2}{2}\|\nabla w_s\|_{L^2}^2
  \;\le\;\frac12\,\Theta_0(s)\,\|w_s\|_{L^2}^2 ,
\]
with $\Theta_0(s) =\|[\nabla.f(.,s)]^-\|_{L^\infty} \in L^1(0,T)$ by \textup{(D2)}. Since $w_0=0$, Gr\"onwall gives $w\equiv0$,
proving \textup{(a)}. Only the negative part of $\Div f$ was used, which is why
\textup{(D2)} is one-sided.

\emph{Step 6:    energy bounds.} Running Steps 2--5 with
$w$ replaced by $p$ itself yields the differential inequality
\[
  \frac{\dd}{\dd s}\,\frac12\|p_s\|_{L^2}^2+\frac{\epsilon^2}{2}\|\nabla p_s\|_{L^2}^2
  \;\le\;\frac12\,\Theta_0(s)\,\|p_s\|_{L^2}^2 .
\]
Discarding the (nonnegative) dissipation term and applying Gr\"onwall gives
\eqref{eq:energysup}. Retaining it instead and integrating over $[0,T]$,
\[
  \frac{\epsilon^2}{2}\!\int_0^T\!\|\nabla p_s\|_{L^2}^2
  \;\le\;\frac12\|p_0\|_{L^2}^2-\frac12\|p_T\|_{L^2}^2
  +\frac12\!\int_0^T\!\Theta_0(s)\|p_s\|_{L^2}^2\,\dd s
  \;\le\;\frac12\|p_0\|_{L^2}^2\big(1+\theta e^{\theta}\big),
\]
where \eqref{eq:energysup} was inserted in the last step; this is \eqref{eq:energydiss}.
Adding the two and using $1+(1+\theta)e^{\theta}\le 2e^{2\theta}$ gives
\eqref{eq:energyest}.

\emph{Step 7: existence.} This is not a standard construction: under
\textup{(D$_{\mathrm E}$)} the drift carries no pointwise bound and no weak derivative
beyond the scalar $\Div f$. We
need to mollify in $x$ and note that\\
\emph{(7a) The approximation preserves the hypotheses.} Let $\rho_\varepsilon$ be a standard
mollifier and $f^\varepsilon:=f*\rho_\varepsilon$ (in $x$, for a.e.\ $s$). Then
$f^\varepsilon\to f$ in $L^2_\loc$ by \textup{(D0)}. Because \eqref{eq:divac} holds, the
divergence is a function and commutes with convolution,
$\Div f^\varepsilon=(\Div f)*\rho_\varepsilon$, whence for a.e.\ $s$
\[
  \Div f^\varepsilon(x)=\int\Div f(y)\,\rho_\varepsilon(x-y)\,\dd y
  \;\ge\;-\big\|[\Div f(\cdot,s)]^-\big\|_{L^\infty},
  \qquad\text{so}\qquad
  \big\|[\Div f^\varepsilon]^-\big\|_{L^\infty}\le\big\|[\Div f]^-\big\|_{L^\infty}.
\]
The one-sided bound is therefore preserved with the same constant, uniformly in
$\varepsilon$. Finally, since $1+|x|\le2(1+|y|)$ whenever
$|x-y|\le\varepsilon\le1$, \textup{(D3)} passes to $f^\varepsilon$ with its constants at most
doubled, and \textup{(D4)} passes to $f^\varepsilon$ with $C_f$ replaced by $2C_f$,
since $|f^\varepsilon(x)|\le(|f|*\rho_\varepsilon)(x)\le C_f(s)\big(1+|x|+\varepsilon\big)$.
Under condition (I1), there exists $p_0^\varepsilon\in C_c^\infty(\mathbb R^d)$,
$p_0^\varepsilon\ge0$, such that
\[
\int_{\mathbb R^d}p_0^\varepsilon(x)\,dx=1,\quad p_0^\varepsilon\to p_0 \quad\text{in }L^1(\mathbb R^d),
\qquad
\|p_0^\varepsilon\|_{L^\infty}
\le \|p_0\|_{L^\infty}.
\]
\emph{This is where \textup{(D4)} is needed.}
Under \textup{(D4)} the field $f^\varepsilon$ is smooth with globally linear growth,
so the regularised Cauchy problem on $\R^d$ --- uniformly parabolic, constant diffusion,
smooth drift of linear growth --- has a unique  solution $p^\varepsilon$, mass is
conserved, and the Phragm\'en--Lindel\"of form of the maximum principle applies, which is
what (7b) below uses.  See Remark~\ref{rem:whyD3star}.

To show (b2) we note that, under $(I2)$, the approximant $p^\varepsilon$ may be chosen to satisfy 
$$ \sup_{\varepsilon>0}
\int_{\mathbb R^d}|x|^2p_0^\varepsilon(x)\,dx<\infty $$
and step (7c') gives uniform bounds on the second moment.\\
\emph{(7b) Positivity and $L^\infty$ control.} $p^\varepsilon\geq 0$ by the
parabolic maximum principle. Write the equation in nondivergence form:
$$\partial_sp^\varepsilon+f^\varepsilon\!\cdot\!\nabla p^\varepsilon+(\Div f^\varepsilon)p^\varepsilon
=\tfrac{\sigma^2}{2}\Delta p^\varepsilon.$$ At a spatial maximum $\nabla p^\varepsilon=0$ and
$\Delta p^\varepsilon\le0$, so $\partial_sp^\varepsilon\le-(\Div f^\varepsilon)p^\varepsilon
\le\|[\Div f^\varepsilon]^-\|_{L^\infty}p^\varepsilon$, and Gr\"onwall with (7a) gives
\begin{equation}\label{eq:linfty}
  \big\|p^\varepsilon_s\big\|_{L^\infty}\;\le\;\|p_0\|_{L^\infty}\,e^{\theta},
  \qquad\text{uniformly in }\varepsilon .
\end{equation}
Only the one-sided bound is used, consistently with \textup{(D2)}.

\emph{(7c) Mass conservation and tightness.} Mass is conserved for the regularised problem.
For tightness the linear cut-off $\phi_R$ of Step 4 is \emph{not} adequate: the drift term
produces the annular mass $\int_{\{R\le|x|\le2R\}}p^\varepsilon$, which is not controlled by
$\int p^\varepsilon(1-\phi_R)$, since $1-\phi_R$ vanishes at the inner edge $|x|=R$ where that
mass may be concentrated. We therefore use a \emph{logarithmic} cut-off, for which the
drift term is small outright and no Gr\"onwall argument is used.
Let $\chi\in C^\infty\big([0,\infty);[0,1]\big)$ with $\chi\equiv1$ on $[0,\tfrac12]$ and
$\chi\equiv0$ on $[1,\infty)$, and for $R>e$ set
\begin{equation}\label{eq:logcut}
  \psi_R(x):=\chi\Big(\frac{\log(1+|x|)}{\log R}\Big),
\end{equation}
so that $\psi_R\equiv1$ on $\{1+|x|\le\sqrt R\}$ and $\psi_R\equiv0$ on $\{1+|x|\ge R\}$.
Writing $\hat x=x/|x|$, one has
$\nabla\psi_R=\tfrac{\chi'}{\log R}\tfrac{\hat x}{1+|x|}$ and
$\Delta\psi_R=\tfrac{\chi''}{(\log R)^2(1+|x|)^2}+\tfrac{\chi'}{\log R}\Delta\log(1+|x|)$
with $\Delta\log(1+|x|)=\tfrac{d-1}{|x|(1+|x|)}-\tfrac{1}{(1+|x|)^2}$; since
$\nabla\psi_R$ is supported in $\{|x|\ge\sqrt R-1\}$, this gives for $R$ large
\begin{equation}\label{eq:logcutbound}
  \big(1+|x|\big)\,\big|\nabla\psi_R(x)\big|\;\le\;\frac{\|\chi'\|_\infty}{\log R},
  \qquad
  \big(1+|x|\big)^2\big|\Delta\psi_R(x)\big|
  \;\le\;\frac{C_d\big(\|\chi'\|_\infty+\|\chi''\|_\infty\big)}{\log R}.
\end{equation}
$(1+|x|)|\nabla\psi_R|\to0$ uniformly, whereas
for the linear cut-off $(1+|x|)|\nabla\phi_R|$ is merely bounded.

Now test the regularised equation against $\psi_R$. Splitting $f^\varepsilon=f_1^\varepsilon+f_2^\varepsilon$
with $f_i^\varepsilon:=f_i*\rho_\varepsilon$, which by (7a) satisfies
$\big\|\tfrac{|f_2^\varepsilon(\cdot,s)|}{1+|x|}\big\|_{L^\infty}\le2\kappa_2(s)$ and
$\big\|\tfrac{|f_1^\varepsilon(\cdot,s)|}{1+|x|}\big\|_{L^1}\le2\kappa_1(s)$ with
$\kappa_1,\kappa_2\in L^1(0,T)$ furnished by \textup{(D3)},
\[
  \Big|\frac{\dd}{\dd s}\!\int p^\varepsilon_s\psi_R\Big|
  \;\le\;\underbrace{\int p^\varepsilon_s\,\big|f_2^\varepsilon\big|\,|\nabla\psi_R|}_{\le\,2\kappa_2(s)\|\chi'\|_\infty/\log R}
  \;+\;\underbrace{\int p^\varepsilon_s\,\big|f_1^\varepsilon\big|\,|\nabla\psi_R|}_{\le\,2\kappa_1(s)\|p^\varepsilon\|_{L^\infty}\|\chi'\|_\infty/\log R}
  \;+\;\underbrace{\tfrac{\sigma^2}{2}\!\int p^\varepsilon_s|\Delta\psi_R|}_{\le\,C_{d,\epsilon}/\log R},
\]
where the first and third bounds use $\int p^\varepsilon_s=1$ together with
\eqref{eq:logcutbound}, and the second uses $\|p^\varepsilon\|_{L^\infty}$ from
\eqref{eq:linfty}. Every term carries the factor $(\log R)^{-1}$ and \emph{none} involves the
exterior mass, so integrating in $s$,
\[
  \sup_{s\le T}\Big|\int p^\varepsilon_s\psi_R-\int p^\varepsilon_0\psi_R\Big|
  \;\le\;\frac{C}{\log R},
\]
with $C$ depending only on $d,\epsilon,\chi$, $\|\kappa_i\|_{L^1(0,T)}$, $\|p_0\|_{L^\infty}$
and $\theta$ --- in particular not on $\varepsilon$ or $R$. Since
$\int p_0^\varepsilon\psi_R\to1$ as $R\to\infty$ uniformly in $\varepsilon$ (as
$p_0^\varepsilon\to p_0$ in $L^1$ and $\psi_R\uparrow1$), and $\psi_R$ vanishes off
$\{1+|x|<R\}$,
\[
  \sup_\varepsilon\ \sup_{s\le T}\int_{\{1+|x|\ge R\}}p^\varepsilon_s\,\dd x
  \;\le\;1-\inf_\varepsilon\int p_0^\varepsilon\psi_R+\frac{C}{\log R}
  \;\xrightarrow[R\to\infty]{}\;0 .
\]
The family is therefore uniformly tight, no mass escapes in the limit, and
$\|p_s\|_{L^1}=1$. Note that the $L^1$ component of \textup{(D3)} is again handled with no
pointwise bound on $f$: it is paired with $(1+|x|)|\nabla\psi_R|$, which
\eqref{eq:logcutbound} makes uniformly small.\\
\medskip
\noindent

\emph{(7c$'$) Uniform second moments.}
By It\^o's formula for the regularised SDE, or equivalently by testing
against bounded $C^2$ truncations of $|x|^2$ and then applying
monotone convergence,
\[
\frac{d}{ds}
\int_{\mathbb R^d}|x|^2p_s^\varepsilon(x)\,dx
=
2\int_{\mathbb R^d}
x\cdot f^\varepsilon(x,s)p_s^\varepsilon(x)\,dx
+d\,\sigma(s)^2.
\]
By $(D4)$ and (7a), $
|f^\varepsilon(x,s)|
\le 2C_f(s)(1+|x|).$
Since $|x|(1+|x|)\le 1+2|x|^2$, writing
\[
m_2^\varepsilon(s)
:=
\int_{\mathbb R^d}|x|^2p_s^\varepsilon(x)\,dx,
\]
we obtain
\[
\frac{d}{ds}m_2^\varepsilon(s)
\le
C_d\bigl(1+C_f(s)\bigr)
\bigl(1+m_2^\varepsilon(s)\bigr).
\]
Hence, by Gr\"onwall's inequality,
\[
M:=
\sup_{\varepsilon>0}\sup_{s\in[0,T]}
m_2^\varepsilon(s)<\infty.
\]
By lower semicontinuity,
\[
\sup_{s\in[0,T]}
\int_{\mathbb R^d}|x|^2p_s(x)\,dx
\le M.
\]
\emph{(7d) Uniform energy bounds.} Steps 2--5 applied to $p^\varepsilon$ are legitimate
classically and, by (7a), yield \eqref{eq:energysup}--\eqref{eq:energydiss} with constants
independent of $\varepsilon$. In particular $\nabla p^\varepsilon$ is bounded in $L^2L^2$.

\emph{(7e) Compactness of the flux $f^\varepsilon p^\varepsilon$.} This is the only step
where a product of two weakly convergent sequences must be passed to the limit, and it needs
strong compactness of $p^\varepsilon$. For $\psi\in C_c^\infty(K)$,
\[
  \big|\langle\partial_sp^\varepsilon,\psi\rangle\big|
  \le\big\|f^\varepsilon p^\varepsilon\big\|_{L^2(K)}\|\nabla\psi\|_{L^2}
  +\tfrac{\epsilon^{-2}}{2}\|\nabla p^\varepsilon\|_{L^2}\|\nabla\psi\|_{L^2},
  \qquad
  \big\|f^\varepsilon p^\varepsilon\big\|_{L^2(K)}\le\|f^\varepsilon\|_{L^2(K)}\|p^\varepsilon\|_{L^\infty},
\]
which is bounded by \textup{(D0)} and \eqref{eq:linfty}. Hence $\partial_sp^\varepsilon$ is
bounded in $L^2\big([0,T];H^{-1}(K)\big)$ while $p^\varepsilon$ is bounded in
$L^2\big([0,T];H^1(K)\big)$, and Aubin--Lions gives, along a subsequence,
$p^\varepsilon\to p$ strongly in $L^2\big([0,T];L^2(K)\big)$ for every compact $K$.
Combined with $f^\varepsilon\to f$ strongly in $L^2_\loc$, this gives
$f^\varepsilon p^\varepsilon\to fp$ in $L^1_\loc$, and every term of the weak formulation
passes to the limit. \textup{(D0)} is used here.

\emph{(7f) Initial trace.} The bound on $\partial_sp^\varepsilon$ in $L^2(H^{-1}(K))$ makes
$\{p^\varepsilon\}$ equicontinuous in $C\big([0,T];H^{-1}(K)\big)$; hence
$p\in C\big([0,T];H^{-1}_\loc\big)$ and $p_s\to p_0$ in $H^{-1}_\loc$ as $s\downarrow0$, so
the initial condition is attained. Together with
\eqref{eq:linfty}, (7c) and (7d), the limit $p$ lies in $\mathcal X$ and solves \eqref{eq:fp}.

\emph{Step 8: identification.} For \textup{(c)}, let
$X$ be a weak solution of \eqref{eq:sde} with marginal densities $q_s\in\mathcal X$. It\^o's
formula applied to $\varphi\in C^\infty_c$ and integration in $x$ show that $(q_s)$ is a
weak solution of \eqref{eq:fp} with initial condition  $p_0$.
By part~(a), $(q_s)$ is the unique weak solution of \eqref{eq:fp}
with initial condition $p_0$. If $(D_E^+)$ also holds, it coincides
with the solution furnished by part~(b1).
\end{proof}

\begin{remark}[Existence needs \textup{(D4)} while uniqueness does not]
\label{rem:whyD3star}
Uniqueness in Theorem~\ref{thm:eulerian}\textup{(a)} sees the drift only through the two
integrals $\int\phi_R\,f\cdot\nabla(w^2)$ and $\int(f\cdot\nabla\phi_R)w^2$ of Steps 3--4,
in both of which $f$ is paired with a bounded, compactly supported weight. The decomposition
\textup{(D3)} is tailored to this: the $L^1_x$ component is integrated, never
evaluated, and the factor $3R$ from the annulus cancels the $R^{-1}$ from $\nabla\phi_R$.

Existence is different because it requires the \emph{approximating} problems to be
solvable, mass-conserving, and to obey a maximum principle on  $\R^d$, which are
pointwise statements about $f^\varepsilon$. under (D3) alone, $f^{\varepsilon}$
 carries no pointwise bound and  truncation  would  destroy the divergence estimate. \textup{(D4)} provides such a pointwise bound.
 \end{remark}

\begin{remark}[Absolute continuity of the divergence is necessary]
\label{rem:divsing}
If \eqref{eq:divac}  
is dropped and (D2) is reinterpreted as a hypothesis on the absolutely continuous part alone
then the conclusion of Theorem~\ref{thm:eulerian}\textup{(b)}
does not hold. Take $d=1$ and
\[
  \sigma\equiv1,\qquad f(x)=-\operatorname{sign}(x),\qquad p_0(x)=e^{-2|x|} .
\]
Then $\int p_0=1$, and $f$ satisfies \textup{(D0)} ($f\in L^\infty$), \textup{(D1)}
($\operatorname{sign}\in\BV_\loc$) and \textup{(D3)} ($|f|\le1$). Its distributional
divergence is $\partial_xf=-2\delta_0$, purely singular, so its absolutely continuous part is zero
$\Div f\equiv0$, giving $\Theta_0\equiv0$ and $\textup{(D2)}$.
The flux vanishes identically, since
$\partial_xp_0=-2\operatorname{sign}(x)e^{-2|x|}$ gives
\[
  f\,p_0-\tfrac12\partial_xp_0=-\operatorname{sign}(x)e^{-2|x|}+\operatorname{sign}(x)e^{-2|x|}=0 ,
\]
so $p_s\equiv p_0$ solves \eqref{eq:fp} and lies in $\mathcal X$. Here
$\|p_0\|_{L^2}^2=\tfrac12$, $\|\partial_xp_0\|_{L^2}^2=2$, $\epsilon=1$ and $\theta=0$, so
the dissipation bound \eqref{eq:energydiss} would assert
\[
2T\;=\;\epsilon^2\!\int_0^T\!\|\partial_xp_s\|_{L^2}^2\,\dd s
  \;\le\;\|p_0\|_{L^2}^2\big(1+\theta e^{\theta}\big)=\tfrac12 .
\]
The left side grows linearly in $T$ while the right side does not depend on $T$ at all, so
this is false for every $T>\tfrac14$, with an unbounded deficit as $T\to\infty$.  The discrepancy is exactly the term dropped in Step 3: with
$D^s\!f=-2\delta_0$,
\[
  -\tfrac12\big\langle D^s\!f,\,p^2\big\rangle=p_0(0)^2=1
  =\int_\R f\,p_0\,\partial_xp_0\,\dd x=\tfrac12\|\partial_xp_0\|_{L^2}^2 ,
\]
so restoring it turns the false inequality into the exact stationary energy identity. Note
that the singular divergence here is \emph{negative}, i.e.\ compressing; the drift
$-\operatorname{sign}(x)$ funnels mass into the origin at a rate the a.c.\ part cannot see.
\end{remark}

\begin{remark}\label{rem:buys}
Theorem~\ref{thm:eulerian} holds for $p_0$  bounded and integrable. No smoothness  or other property is required. In particular
data may be concentrated near a lower-dimensional set: the density flow is
still uniquely determined. As we will see in the next section, the same is not true  for the Lagrangian flow.
\end{remark}

\section{ Lagrangian viewpoint: the probability flow ODE}\label{sec.lagrange}
We now turn to the probability flow ODE \eqref{eq:pfode}.

We recall the concept of regular Lagrangian flow, following DiPerna and Lions \cite{DL1989}:
\begin{definition}[Regular Lagrangian flow]\label{def:rlf}
$Z:[0,T]\times\R^d\to\R^d$ is a \emph{regular Lagrangian flow} (RLF) for a velocity field $v$ if
\begin{enumerate}
    \item[(i)] for $\Leb$-a.e.\ $x$, $t\mapsto Z(t,x)$ is absolutely continuous with
$Z(t,x)=x+\int_0^tv(Z(s,x),s)\dd s$;
\item[(ii)] $Z(t,\cdot)_\#\Leb\le L\,\Leb$ for some $L\ge1$.
\end{enumerate} 
\end{definition}

\begin{definition}[Bounded distributional solution]\label{def:class}
For $u_0\in L^1\cap L^\infty(\R^d)$, a \emph{bounded distributional solution} of
\eqref{eq:cont} with initial condition  $u_0$ is a function
$u\in L^\infty\big((0,T);L^1\cap L^\infty(\R^d)\big)$ with $uv\in L^1_\loc$ such that
\[
  \int_0^T\!\!\int_{\R^d}\big(\partial_s\varphi+v\cdot\nabla\varphi\big)\,u\,\dd x\,\dd s
  \;+\;\int_{\R^d}\varphi(\cdot,0)\,u_0\,\dd x\;=\;0
  \qquad\forall\,\varphi\in C_c^\infty\big(\R^d\times[0,T)\big).
\]
This formulation presupposes $v\in L^1_\loc([0,T]\times\R^d)$, without which the second
term in the integrand is undefined for general $u\in L^\infty$; see
Theorem~\ref{thm:ce-fail}\textup{(i)}.
\end{definition}

The following result summarizes Ambrosio's BV renormalisation theorem,
comparison principle, and existence and uniqueness theory for regular
Lagrangian flows; see
\cite[Theorems~3.6, 4.3, 5.1 and~6.1]{Ambrosio2008},
which extend previous results by DiPerna and Lions
\cite[Theorem~III.2]{DL1989} derived for
$\nabla\!\cdot v\in L^1((0,T);L^\infty)$.
\begin{theorem}[Ambrosio/DiPerna-Lions]
\label{thm:DL}
Let $v:\mathbb R^d\times(0,T)\to\mathbb R^d$ be a vector
field satisfying
\begin{align*}
\textnormal{(R1)}\quad&
v\in L^1\bigl((0,T);BV_{\mathrm{loc}}
(\mathbb R^d;\mathbb R^d)\bigr),\\
\textnormal{(R2)}\quad&
D\!\cdot v_t=(\nabla\!\cdot v_t)\mathcal L^d
\quad\text{for a.e. }s,
\qquad
[\nabla\!\cdot v]^-
\in L^1\bigl((0,T);L^\infty(\mathbb R^d)\bigr),\\
\textnormal{(R3)}\quad&
\frac{|v|}{1+|x|}
\in
L^1\bigl((0,T);L^1(\mathbb R^d)\bigr)
+
L^1\bigl((0,T);L^\infty(\mathbb R^d)\bigr).
\end{align*}
Then:
\begin{enumerate}
\item
For every $u_0\in L^1\cap L^\infty(\mathbb R^d)$, the continuity
equation
\[
\partial_tu+\nabla\!\cdot(vu)=0,
\qquad
u(.,0)=u_0,
\]
has at most one bounded distributional solution 
$
u\in L^\infty\bigl((0,T);L^1\cap L^\infty(\mathbb R^d)\bigr).
$
\item
There exists a regular Lagrangian flow $Z$ associated with $v$,
unique up to $\mathcal L^d$-null sets, and
\[
Z(t,\cdot)_\#\mathcal L^d
\le
\exp\left(
\int_0^t
\bigl\|[\nabla\!\cdot v(\cdot,s)]^-\bigr\|_{L^\infty}\,ds
\right)\mathcal L^d.
\]
\item
For every nonnegative
$u_0\in L^1\cap L^\infty(\mathbb R^d)$, the unique solution is
$
u_t\mathcal L^d
= Z(t,\cdot)_\#(u_0\mathcal L^d).$
\end{enumerate}
If, in addition,
\[
[\nabla\!\cdot v]^+
\in L^1\bigl((0,T);L^\infty(\mathbb R^d)\bigr),
\]
then the reverse flow is well posed and satisfies two-sided compression bounds;
in particular $Z(t,\cdot)$ is
essentially invertible.
\end{theorem}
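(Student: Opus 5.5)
The statement is a summary of known results: Ambrosio's BV theory for transport and continuity equations, together with the DiPerna--Lions theory. So the proof is a reduction to the cited theorems \cite[Theorems~3.6, 4.3, 5.1, 6.1]{Ambrosio2008}, with care on three points: the hypotheses match, the one-sided divergence bound is what drives the compression estimate, and the growth condition (R3) replaces boundedness of $v$.

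\textbf{Step 1: renormalisation.} Under (R1)--(R2), Ambrosio's theorem shows that every bounded distributional solution $u$ is renormalised: $\partial_t\beta(u)+\Div(v\beta(u))=(\beta(u)-u\beta'(u))\Div v$ for $\beta\in C^1$. The proof regularises by convolution and controls the commutator $r_\varepsilon=\Div(v u)*\rho_\varepsilon-\Div(v(u*\rho_\varepsilon))$. For BV fields one only gets boundedness of $r_\varepsilon$ in $L^1_\loc$, not convergence to zero. The limit defect is a measure absolutely continuous with respect to $|Dv|$. Optimising over anisotropic mollifiers, via Alberti's rank-one theorem on the jump/Cantor part, shows it vanishes. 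This is where (R2), absolute continuity of the divergence, is essential.

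\textbf{Step 2: uniqueness (item 1).} Apply renormalisation to $w=u^1-u^2$ with $\beta(w)=|w|$ (after approximation), so that $\partial_t|w|+\Div(v|w|)=0$. Test with the logarithmic cut-off $\psi_R$ of \eqref{eq:logcut}. By (R3) and \eqref{eq:logcutbound}, the flux term $\int|w||v||\nabla\psi_R|$ is bounded by $C/\log R$ times $\|w\|_{L^1\cap L^\infty}$, exactly as in Step 7c of Theorem~\ref{thm:eulerian}. Letting $R\to\infty$ gives $\int|w_t|=0$.

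\textbf{Step 3: existence of the flow and the compression bound (item 2).} Mollify $v^\varepsilon=v*\rho_\varepsilon$, which preserves (R2)--(R3) with the same lower divergence bound, as in (7a). The classical flows $Z^\varepsilon$ satisfy $\det\nabla Z^\varepsilon(t,x)=\exp\int_0^t\Div v^\varepsilon(Z^\varepsilon,s)\,\dd s$. Hence the push-forward density of $Z^\varepsilon(t,\cdot)_\#\Leb$ is at most $\exp\int_0^t\|[\Div v]^-\|_{L^\infty}$. Next, lift to generalised flows: measures on paths concentrated on integral curves. Tightness follows from (R3), and existence of a limit follows by compactness. Uniqueness of bounded solutions (Step 2) then forces the limit measure to be induced by a map $Z$, via the superposition principle: a non-deterministic superposition would produce two distinct bounded solutions from a common initial datum after restricting to subsets. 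Uniqueness of the RLF follows by the same argument. The compression bound passes to the limit by lower semicontinuity.

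\textbf{Step 4: item 3 and the reverse flow.} Item 3 holds because $Z(t,\cdot)_\#(u_0\Leb)$ is a bounded solution, thanks to the compression bound, so it coincides with $u$ by Step 2. For the final assertion, apply items 1--3 to the time-reversed field $\tilde v(x,s)=-v(x,t-s)$. Its negative divergence part is $[\Div v]^+$, so the backward RLF exists with the symmetric compression bound. Composing the forward and backward flows and using uniqueness gives the identity a.e., hence essential invertibility.

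\textbf{Main obstacle.} The hard part is Step 1, the BV commutator estimate, which relies on Alberti's rank-one theorem. I would cite it directly from \cite{Ambrosio2008} rather than reprove it.
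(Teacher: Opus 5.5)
Your proposal is correct and follows essentially the same route as the paper. The paper also treats the statement as a summary of Ambrosio's results, obtaining uniqueness from BV renormalisation \cite[Theorem~5.1]{Ambrosio2008} combined with the comparison principle \cite[Theorem~4.3]{Ambrosio2008} applied in both directions; your renormalisation with $\beta(w)=|w|$ followed by the logarithmic cut-off is that comparison argument written out, and your Steps 3--4 (mollified flows with the one-sided Jacobian bound, regular generalised flows forced to be deterministic by uniqueness, and time reversal under $[\Div v]^+\in L^1(L^\infty)$) are the standard arguments behind the remaining cited theorems.
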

The uniqueness assertion follows from
\cite[Theorems~4.3 and~5.1]{Ambrosio2008}:
Theorem~5.1 yields renormalisation, and Theorem~4.3 yields the
comparison principle; applying comparison in both directions gives
uniqueness.

\begin{theorem}[Superposition \cite{Amb2004,AGS,AT2014}]\label{thm:super}
If $t\mapsto \mu_t$ is narrowly continuous and satisfies $$\partial_t\mu_t+\Div(v\mu_t)=0,\quad {\rm and}\qquad 
\int_0^T\!\int\frac{|v|}{1+|x|}\ \dd\mu_t\dd t<\infty, $$ then there is
$\eta\in\mathcal P(C([0,T];\R^d))$ concentrated on integral curves of $v$ with $(e_t)_\#\eta=\mu_t$, where $e_t(\gamma)=\gamma(t)$ is the evaluation map.
\end{theorem}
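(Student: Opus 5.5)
The statement is the superposition principle, and I would prove it in the standard Ambrosio--Gigli--Savar\'e way: regularise the velocity, build measures on paths from the smooth flows, and pass to the limit by tightness.

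\emph{Step 1: regularise.} Take a strictly positive, smooth convolution kernel $\rho_\varepsilon$ with good moment properties (for instance a Gaussian). Set
\[
\mu^\varepsilon_t:=\mu_t*\rho_\varepsilon,\qquad v^\varepsilon:=\frac{(v\mu_t)*\rho_\varepsilon}{\mu_t*\rho_\varepsilon}.
\]
Then $\partial_t\mu^\varepsilon_t+\Div(v^\varepsilon\mu^\varepsilon_t)=0$ still holds, and $v^\varepsilon$ is smooth and locally Lipschitz in $x$. By Jensen's inequality, using the convexity of $(a,m)\mapsto |a|/m$ together with the weight $1+|x|$ (which is comparable under convolution up to a constant), the integrability bound passes to the regularised problem uniformly in $\varepsilon$:
\[
\int_0^T\!\!\int\frac{|v^\varepsilon|}{1+|x|}\,\dd\mu^\varepsilon_t\,\dd t\le C.
\]
This bound should also prevent blow-up of the characteristics. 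The Cauchy--Lipschitz flow $X^\varepsilon$ of $v^\varepsilon$ is then defined $\mu^\varepsilon_0$-a.e. for all times, since the bound in $L^1$ along the flow rules out escape to infinity. Uniqueness for the smooth continuity equation then gives $X^\varepsilon(t,\cdot)_\#\mu^\varepsilon_0=\mu^\varepsilon_t$. Define
\[
\eta^\varepsilon:=(X^\varepsilon(\cdot,x))_\#\mu^\varepsilon_0(\dd x).
\]

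\emph{Step 2: tightness in $\mathcal P(C([0,T];\R^d))$.} I would use a coercive functional on paths,
\[
\Psi(\gamma)=\varphi(|\gamma(0)|)+\int_0^T\theta\!\left(\frac{|\dot\gamma|}{1+|\gamma|}\right)\dd t,
\]
where $\theta$ is superlinear, obtained from de la Vall\'ee-Poussin for the (uniformly integrable) family of time-dependent integrands, and $\varphi$ handles the initial tightness of $\mu_0$. The sublevel sets of $\Psi$ are compact by Arzel\`a--Ascoli applied to $\log(1+|\gamma|)$, whose derivative is controlled by $|\dot\gamma|/(1+|\gamma|)$. The uniform integrability of the convolved quantities is the main technical obstacle; I would handle it by extracting a single convex superlinear $\theta$ for the limit measure $|v|/(1+|x|)\,\mu_t\,\dd t$ and using Jensen again to transfer it to the $\varepsilon$-level. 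Prokhorov then gives $\eta^\varepsilon\to\eta$ narrowly along a subsequence, and $(e_t)_\#\eta=\lim\mu^\varepsilon_t=\mu_t$.

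\emph{Step 3: concentration on integral curves.} This is the delicate point, because $v$ is not continuous. For a bounded continuous $w$ with compact support, approximating $v$ in $L^1(\mu_t\,\dd t)$, I would show
\[
\int\left|\gamma(t)-\gamma(0)-\int_0^t v(\gamma(s),s)\,\dd s\right|\wedge 1\ \dd\eta=0.
\]
The argument is to replace $v$ by $w$, which is continuous so the functional is lower semicontinuous and passes to the limit in $\varepsilon$. The error is bounded by $\int\!\int|v^\varepsilon-w^\varepsilon|\,\dd\mu^\varepsilon_t\,\dd t\le\int\!\int|v-w|\,\dd\mu_t\,\dd t$, again by Jensen, plus the analogous term for the limit measure $\eta$. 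That last term uses $(e_t)_\#\eta=\mu_t$. Letting $w\to v$ finishes the proof, with localisation via the $(1+|x|)$ weight to avoid a global $L^1$ requirement.
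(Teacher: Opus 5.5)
Your outline is the standard regularise--compactness--identification proof of the superposition principle due to Ambrosio and Ambrosio--Gigli--Savar\'e. The paper states this result with citations only and gives no proof of its own, so your approach agrees with what it relies on.

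Two details need tightening. First, the map $(a,m)\mapsto |a|/m$ is not jointly convex; what Step~1 actually needs is $|(v\mu_t)*\rho_\varepsilon|\le(|v|\mu_t)*\rho_\varepsilon$, together with $\frac{1+|y|}{1+|x|}\le 1+|x-y|$ and a first-moment bound on $\rho$. Second, that weight factor is unbounded for a Gaussian kernel, so the Jensen transfer of $\theta$ in Step~2 only works if you choose the de la Vall\'ee-Poussin function with moderate growth, e.g.\ $\theta(\lambda r)\le\lambda^{2}\theta(r)$ for $\lambda\ge1$ (always possible), and use a second moment of $\rho$.
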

The following result uses the Ambrosio-Diperna-Lions theory  to study the PF-ODE \eqref{eq:pfode} and the associated continuity equation \eqref{eq:cont}.
Theorem~\ref{thm:eulerian} did not require any regularity assumption on  the score function $\nabla\log p_t$; here we
cannot avoid such assumptions. The  PF-ODE \eqref{eq:pfode} is associated with the  velocity field  $v=f-\tfrac12\sigma^2\nabla\log p$ so one does require regularity of $\nabla\log p_t$.
\begin{theorem}[Lagrangian well-posedness and transport for the PF-ODE]\label{thm:lagrangian}
Assume \textup{(E1)}, \textup{(D$_{\mathrm L}$)}, \textup{(I)} and \textup{(S)}, and let
$v(x,t)=f(x,t)-\tfrac12\sigma^2(t)\nabla\log p_t(x)$. Then:
\begin{enumerate}
\item[\textup{(i)}] $v$ satisfies conditions \textup{(R1)--(R3)} of Theorem \ref{thm:DL}, with
\[
  \big\|[\Div v(\cdot,t)]^-\big\|_\infty\le\big\|[\Div f(\cdot,t)]^-\big\|_\infty
  +\tfrac12\|\sigma\|^2_{L^\infty}\Lambda(t)=:\Theta_-(t)\in L^1(0,T).
\]
\item[\textup{(ii)}] The PF-ODE \eqref{eq:pfode} admits a regular Lagrangian flow $Z$, unique up to $\Leb$-null
sets, with compressibility constant $L=e^{\Theta_-}$, where 
$\Theta_-:=\int_0^T\Theta_-(t)\,\dd t$; 
\item[\textup{(iii)}] $Z(t,\cdot)_\#(p_0\Leb)=p_t\Leb$ for every $t\in[0,T]$; hence for
$Z_0\sim p_0$ and $Z_t:=Z(t,Z_0)$,
\[
  \Law(Z_t)=\Law(X_t),\qquad t\in[0,T].
\] 
\end{enumerate}
\end{theorem}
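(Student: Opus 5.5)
The proof reduces everything to Theorem~\ref{thm:DL}: part (i) checks that $v$ meets its hypotheses, part (ii) is then read off from it, and part (iii) needs one extra identification argument.

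\textbf{Part (i): checking (R1)--(R3).} Write $v=f-\tfrac12\sigma^2 s$ with $s=\nabla\log p$.

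For (R1), (D1) gives $f\in L^1_tBV_\loc$ and (S2) gives $s\in L^1_tW^{1,1}_\loc\subset L^1_tBV_\loc$. Since $\sigma^2\le\epsilon^{-2}$ by (E1), the sum lies in $L^1_tBV_\loc$.

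For (R2), (D2) gives $D\cdot f=(\Div f)\Leb$. The divergence of a $W^{1,1}_\loc$ field is automatically absolutely continuous, with density $\Delta\log p_t$. Hence
\[
\Div v=\Div f-\tfrac12\sigma^2\Delta\log p_t .
\]
Using $[a-b]^-\le a^-+b^+$, we get
\[
[\Div v]^-\le[\Div f]^-+\tfrac12\sigma^2[\Delta\log p_t]^+ .
\]
Taking $L^\infty$ norms and applying (S3) gives exactly the bound $\Theta_-(t)$ in (i).

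For (R3), (D3) handles $f$. For the score term, (S4) gives $|s|/(1+|x|)\le A(t)\in L^1(0,T)$, so it belongs to the $L^1_tL^\infty_x$ component.

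\textbf{Part (ii).} This is now item 2 of Theorem~\ref{thm:DL}. It gives the regular Lagrangian flow, its uniqueness up to $\Leb$-null sets, and the compression bound $e^{\Theta_-}$.

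\textbf{Part (iii): identification.} The plan is to show that $t\mapsto p_t$ is itself a bounded distributional solution of \eqref{eq:cont} with datum $p_0$. The uniqueness in item 1 of Theorem~\ref{thm:DL} then forces $p_t\Leb=Z(t,\cdot)_\#(p_0\Leb)$, by item 3 with $u_0=p_0\ge0$.

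\emph{Membership in the right class.} The bound $p\in L^\infty_t(L^1\cap L^\infty)$ is (S1). For $pv\in L^1_\loc$:
\begin{itemize}
\item $pf\in L^1_\loc$ follows from (D0) and $p\in L^\infty$;
\item for the score part, $p\,|s|=|\nabla p|$, and on a ball $B_R$ (S4) gives $|\nabla p_t|\le A(t)(1+R)p_t$. This is integrable in $(x,t)$ over $B_R\times(0,T)$.
\end{itemize}

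\emph{The equation.} We need to rewrite the diffusion term of \eqref{eq:fp}:
\[
\tfrac12\sigma^2\Delta p=\tfrac12\sigma^2\Div(p\nabla\log p)=\Div\big(\tfrac12\sigma^2\nabla p\big).
\]
Then \eqref{eq:fp} becomes $\partial_tp+\Div(pv)=0$ in distributions, tested against $\varphi\in C_c^\infty(\R^d\times[0,T))$. This requires that $\nabla p$ is a locally integrable function and that $p$ is the Fokker--Planck weak solution.

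\begin{itemize}
\item I would obtain the Fokker--Planck weak formulation for the law of $X$ directly from Itô's formula applied to $\varphi(X_t,t)$, which gives $\partial_tp+\Div(fp)-\tfrac12\sigma^2\Delta p=0$ in $\mathcal D'$.
\item Then $\nabla p=p\,s$ with $p\,s\in L^1_\loc$, as shown above. The identity $\nabla p=p\nabla\log p$ in the weak sense follows from (S2): $\log p_t\in W^{1,1}_\loc$ and $p_t>0$ a.e. I would justify it by the chain rule applied to the truncations $e^{\max(\log p,-k)}$, letting $k\to\infty$ and using $p\in L^\infty$.
\item Substituting $\Delta p=\Div(p\,s)$ moves the diffusion term into the flux and gives the weak formulation of Definition~\ref{def:class}.
\item The initial datum is attained by the narrow continuity in (S1).
\end{itemize}

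An alternative route is the superposition principle, Theorem~\ref{thm:super}. Its integrability hypothesis holds by (S1), (S4) and (D3), and it represents $p_t$ through integral curves of $v$. Combined with uniqueness of the flow, this gives the same conclusion. Finally, $\Law(Z_t)=\Law(X_t)$ follows because $p_t$ is the density of $X_t$ by definition.

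\textbf{Main obstacle.} I expect the hard step to be the rigorous justification that $p$ solves the continuity equation in the class where Ambrosio's uniqueness applies. This means proving the chain rule $\nabla p=p\nabla\log p$ under only $W^{1,1}_\loc$ regularity of the score and a.e.\ positivity, and making sure the flux $pv$ is locally integrable. Everything else is a direct check of hypotheses.
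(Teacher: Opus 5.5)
Your proposal is correct and follows the paper's proof essentially step for step. It uses the same verification of (R1)--(R3), with the score term placed in the $L^1_tL^\infty_x$ component of (R3), reads part (ii) off Theorem~\ref{thm:DL}, and proves part (iii) by showing via It\^o's formula and $\nabla p_t=p_t\nabla\log p_t$ that $p$ is a bounded distributional solution of the continuity equation and then invoking uniqueness. Your truncation argument for the chain rule is more careful than the paper's one-line assertion.

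The one step to make explicit is that uniqueness identifies $p_t\mathcal{L}^d$ with $Z(t,\cdot)_\#(p_0\mathcal{L}^d)$ only for a.e.\ $t$. The paper upgrades this to every $t\in[0,T]$ using narrow continuity of both curves: for $p_t$ this comes from (S1), and for the pushforward from continuity of the trajectories $t\mapsto Z(t,x)$ and dominated convergence.
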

The asymmetry between 
Theorems~\ref{thm:eulerian} and~\ref{thm:lagrangian} is due to the fact that the diffusion model benefits from 'regularisation by noise' \cite{Ver1981,KR2005,FGP2010}  which leads to well-posedness of the SDE even in absence of regularity of the velocity field.
The PF-ODE \eqref{eq:pfode} does not benefit from this effect, so well-posedness of its associated probability flow requires many more assumptions.

\begin{remark}\label{rem:overkill} As we will see in Section~\ref{sec:verify}, under certain additional assumptions ensuring that the score is globally
Lipschitz and $f$ is smooth,  the probability-flow velocity $v$ is  Lipschitz, so classical Cauchy–Lipschitz theory already yields a unique flow. The significance of Theorem~\ref{thm:lagrangian}  is  not to recover this classical setting, but to extend well-posedness to Sobolev and bounded-variation score functions, where classical ODE theory no longer applies. This extension is essential for treating diffusion models beyond the uniformly Lipschitz regime.  $f$ need only be locally of bounded variation with one-sided divergence
bound, while $(S3)$ requires control only of
$[\Delta\log p_s]^+$, rather than of the full log-Hessian. The price to pay is that $Z$ is defined only $\Leb$-a.e.
\end{remark}

\begin{remark}[The PF-ODE only replicates the marginal flow]
Theorem~\ref{thm:lagrangian}(iii) states that the PF-ODE replicates the  flow of marginal distributions of $X$.
Conditionally on
$Z_0$, the law of $(Z_s)_{s\le T}$ is a concentrated  on a single curve, so is  singular with
respect to the law of $X=(X_t)_{t\in[0, T]}$; all conditional distributions for $X$ and $Z$ are in fact mutually singular! What is preserved
is exactly $\{\Law(X_t)\}_{t\in[0,T]}$, which is what a sampler requires.
\end{remark}
We now turn to the proof of Theorem~\ref{thm:lagrangian}.
\begin{proof}
By It\^o's formula and integration by parts,
$p$ is a weak solution of \eqref{eq:fp}. By (S1)--(S2), $p_s>0$ a.e.\ and
$\nabla\log p_s\in W^{1,1}_\loc$, so $\nabla p_s=p_s\nabla\log p_s$ in $L^1_\loc$ and
$\tfrac12\sigma^2\Delta p_s=\tfrac12\sigma^2\Div(p_s\nabla\log p_s)$ in $\mathcal D'$.
Substituting gives $\partial_sp+\Div(pv)=0$, with $pv\in L^1_\loc$ by (S1), (S4) and the
uniform second-moment bound.

(R1): (D1) gives $f\in L^1(\BV_\loc)$ and (S2) gives
$\tfrac12\sigma^2\nabla\log p\in L^1(W^{1,1}_\loc)\subset L^1(\BV_\loc)$, using
$\sigma^2\le\epsilon^{-2}$. (R2): $\Div v=\Div f-\tfrac12\sigma^2\Delta\log p$, so
$[\Div v]^-\le[\Div f]^-+\tfrac12\sigma^2[\Delta\log p]^+$, and (D2), (E1), (S3) give the
stated $\Theta_-$. \\
For \emph{(R3)}: by (D3), $f=f_1+f_2$ with
\[  
\frac{f_1}{1+|x|}
\in L^1\bigl((0,T);L^1(\mathbb R^d)\bigr),
\qquad
\frac{f_2}{1+|x|}
\in L^1\bigl((0,T);L^\infty(\mathbb R^d)\bigr).
\] 
Let $v_1=f_1,$ and $
v_2=f_2-\frac12\sigma^2\nabla\log p.$
Then
\[
\frac{|v_1|}{1+|x|}
=
\frac{|f_1|}{1+|x|}
\in L^1\bigl((0,T);L^1(\mathbb R^d)\bigr),
\]
while \emph{(S4)} and $\sigma^2\le\epsilon^{-2}$ give
\[
\frac{|v_2(x,s)|}{1+|x|}
\le
\left\|
\frac{f_2(\cdot,s)}{1+|\cdot|}
\right\|_{L^\infty}
+\frac12\epsilon^{-2}A(s)
=G(s).
\]
Thus \emph{(R3)} holds.

\emph{(ii).} We apply Theorem~\ref{thm:DL}. If in addition
$[\Div v]^+\in L^1(L^\infty)$ one obtains the two-sided Jacobian bound
\begin{equation}\label{eq:jac}
  e^{-\Theta_+}\,\Leb\;\le\;Z(s,\cdot)_\#\Leb\;\le\;e^{\Theta_-}\,\Leb,
  \qquad
  \Theta_\pm:=\int_0^T\big\|[\Div v(\cdot,s)]^{\pm}\big\|_{L^\infty}\,\dd s,
\end{equation}
and $Z(s,\cdot)$ is essentially injective. In the smooth case the Jacobian
$J(s,x)=\det\nabla_xZ(s,x)$ solves $\partial_sJ=(\Div v)(Z,s)J$, so
$e^{-\Theta_-}\le J\le e^{\Theta_+}$; since the pushforward density at $Z(s,x)$ is $1/J(s,x)$,
the \emph{upper} bound on $Z(s,\cdot)_\#\Leb$ is governed by $\Theta_-$ and the \emph{lower}
bound by $\Theta_+$, as displayed. 
Under \textup{(D2)} and \textup{(S3)}, which are one-sided bounds, only
$\Theta_-<\infty$ is obtained. The complementary estimate
$\Theta_+<\infty$ does not follow from these assumptions; it requires,
for example, control of $[\nabla\!\cdot f]^+$ and
$[\Delta\log p]^-$. Under the additional hypothesis
$[\nabla\cdot{}v]^+\in L^1(L^\infty)$ the lower compression bound holds and the reverse
flow is well posed; in particular, $Z(s,\cdot)$ is essentially
invertible.

\emph{(iii).} Let $
\mu_s:=Z(s,\cdot)_\#(p_0\mathcal L^d).$
By Theorem~\ref{thm:DL}, $(\mu_s)$ is a weak solution of
\[
\partial_s\mu_s+\nabla\!\cdot(v\mu_s)=0,
\qquad
\mu_0=p_0\mathcal L^d.
\]
Moreover, by the compressibility estimate,
\[
\mu_s
\le
\|p_0\|_{L^\infty}
\,Z(s,\cdot)_\#\mathcal L^d
\le
\|p_0\|_{L^\infty}
\exp\left(
\int_0^s
\bigl\|[\nabla\!\cdot v(\cdot,r)]^-\bigr\|_{L^\infty}\,dr
\right)\mathcal L^d.
\]
Thus $\mu_s=u_s\mathcal L^d$, where
\[
u\in
L^\infty\bigl((0,T);L^1\cap L^\infty(\mathbb R^d)\bigr),
\qquad
\|u_s\|_{L^1}=1,
\qquad
\|u_s\|_{L^\infty}
\le e^{\Theta^-}\|p_0\|_{L^\infty}.
\]
Hence $u$ belongs to the uniqueness class of Theorem~\ref{thm:DL}.
By the first paragraph of the proof, $p$ is also a bounded
weak solution of the same continuity equation with initial
condition $p_0$. Uniqueness therefore gives
$
u=p
\quad\text{a.e. on }(0,T)\times\mathbb R^d.$
The curves
\[
s\longmapsto \mu_s,
\qquad
s\longmapsto p_s\mathcal L^d
\]
are narrowly continuous; therefore their equality for a.e. $s$
extends to every $s\in[0,T]$. Thus
\[
Z(s,\cdot)_\#(p_0\mathcal L^d)=p_s\mathcal L^d
\qquad\text{for every }s\in[0,T].
\]
If $Z_0\sim p_0\mathcal L^d$ and $Z_s:=Z(s,Z_0)$, then
$
\operatorname{Law}(Z_s)
=
p_s\mathcal L^d
=
\operatorname{Law}(X_s).$
\end{proof}

\begin{remark}[Replacing \textup{(S4)} by \textup{(S4$'$)}]\label{rem.iv}
    If assumption \textup{(S4)} is replaced by the weaker assumption  \textup{(S4$'$)},     then
conclusions \textup{(ii)} and \textup{(iii)} may fail but  $(p_t)$ admits a
probabilistic representation as a superposition of integral curves of $v$
\textup{(}Theorem~\ref{thm:super}\textup{)}.
    Indeed, an application of the Cauchy--Schwarz inequality yields
\[
  \int_0^T\!\!\!\int\frac{|v|}{1+|x|}\,p_s\,\dd x\,\dd s
  \le\int_0^T\!\!\!\int\frac{|f|}{1+|x|}p_s
+\tfrac12\epsilon^{-2}\!\int_0^T\!\!\!\int|\nabla\log p_s|\,p_s
  \le C+\tfrac12\epsilon^{-2}\sqrt T\Big(\int_0^T\!\Fish(p_s)\dd s\Big)^{1/2}<\infty,
\]
the first term being finite by (D3) and (S1). So the integrability assumption of
Theorem~\ref{thm:super} holds and it yields the representation. It does \emph{not} yield
(R3), which is a pointwise statement about $v$ and not implied by a
$p_t$-weighted bound; hence neither RLF nor uniqueness of $\eta$ can be asserted.
\end{remark}
The following lemma shows that finite initial entropy and second-moment   plus the
one-sided divergence bound imply (S4$'$):\begin{lemma}[Entropy dissipation and $(S4')$]
Assume \emph{(E1)}, \emph{$(D_E^+)$} and \emph{(I1)--(I2)}, and let
$p$ be the solution constructed in Theorem~\ref{thm:eulerian}(b), i.e.\ the limit of
the regularized scheme of Step~7. Then
\begin{equation}
\frac{\epsilon^2}{2}\int_0^T I(p_s)\,ds
\le
\log\!\bigl(1+\|p_0\|_{L^\infty}\bigr)
+1
+C_d(1+M)
+
\bigl\|[\nabla\!\cdot f]^-\bigr\|_
{L^1([0,T];L^\infty)}
<\infty,
\label{eq.14}
\end{equation}
where
\[
M:=
\sup_{\varepsilon>0}\sup_{s\in[0,T]}
\int_{\mathbb R^d}|x|^2p_s^\varepsilon(x)\,dx
<\infty
\]
is the uniform second-moment bound obtained in Step~$7(c')$.
In particular, $(S4')$ holds.\label{lem:fisher}
\end{lemma}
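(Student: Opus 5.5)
The plan is to derive the entropy-dissipation identity for the regularised problems $p^\varepsilon$ of Step~7, where all manipulations are classical. The bound will then be passed to the limit $\varepsilon\to0$ by lower semicontinuity of the Fisher information.

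For fixed $\varepsilon$, $p^\varepsilon$ is smooth and positive (strong maximum principle for the uniformly parabolic regularised equation). We test the equation against $1+\log p^\varepsilon$. To avoid boundary and positivity issues, we do this with the logarithmic cut-off $\psi_R$ of \eqref{eq:logcut} and with $\log(p^\varepsilon+\delta)$, letting $R\to\infty$ and $\delta\to0$ at the end. Formally this gives
\[
\frac{\dd}{\dd s}\int p^\varepsilon_s\log p^\varepsilon_s+\frac{\sigma(s)^2}{2}\,\Fish(p^\varepsilon_s)
=\int f^\varepsilon\cdot\nabla p^\varepsilon_s=-\int(\Div f^\varepsilon)\,p^\varepsilon_s
\le\big\|[\Div f^\varepsilon(\cdot,s)]^-\big\|_{L^\infty}.
\]
The last step uses $\int p^\varepsilon_s=1$, and the middle step uses the integration by parts already justified in Step~3. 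As in Step~3, the drift enters only through $\Div f$, and the one-sided bound is preserved by mollification (7a). Integrating over $[0,T]$ and using $\sigma\ge\epsilon$ gives
\[
\frac{\epsilon^2}{2}\int_0^T\Fish(p^\varepsilon_s)\,\dd s\le \Ent(p^\varepsilon_0)-\Ent(p^\varepsilon_T)+\big\|[\Div f]^-\big\|_{L^1(L^\infty)}.
\]

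The upper bound on $\Ent(p^\varepsilon_0)$ comes from $p_0^\varepsilon\le\|p_0\|_{L^\infty}$ and $\int p_0^\varepsilon=1$, which give $\Ent(p_0^\varepsilon)\le\log(1+\|p_0\|_{L^\infty})$. For the lower bound on $\Ent(p^\varepsilon_T)$ we use the Gaussian comparison. Relative entropy with respect to $\gamma(x)=(2\pi)^{-d/2}e^{-|x|^2/2}$ is nonnegative, so
\[
\int p\log p\ge\int p\log\gamma=-\tfrac d2\log(2\pi)-\tfrac12\int|x|^2p\ge -C_d(1+M).
\]
This uses the uniform second-moment bound $M$ of Step~7(c$'$). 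Together these produce \eqref{eq.14} for $p^\varepsilon$ with constants independent of $\varepsilon$; the extra ``$+1$'' absorbs the $\delta$-regularisation and the difference $p\log p$ versus $p\log(p+\delta)$.

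The main obstacle is justifying the tail terms and passing to the limit. For the cut-off terms, the contributions $\int p^\varepsilon\log p^\varepsilon\,f^\varepsilon\cdot\nabla\psi_R$ and $\int p^\varepsilon\log p^\varepsilon\,\Delta\psi_R$ must vanish as $R\to\infty$. We would control them using (D4), \eqref{eq:logcutbound}, the $L^\infty$ bound \eqref{eq:linfty}, and the second moment. The second moment dominates $p|\log p|$ on the region $p<e^{-|x|^2}$ via the standard splitting, and $p\log p$ is bounded above there via \eqref{eq:linfty}.

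For $\varepsilon\to0$, we write $\Fish(p)=4\int|\nabla\sqrt p|^2$, which is convex and lower semicontinuous. Since $p^\varepsilon\to p$ in $L^2_{\loc}$ (Step~7e), $\sqrt{p^\varepsilon}\to\sqrt p$ in $L^2_\loc$, and the uniform bound on $\nabla\sqrt{p^\varepsilon}$ in $L^2L^2$ gives weak convergence of a subsequence. Weak lower semicontinuity of the $L^2([0,T]\times\R^d)$ norm then yields $\int_0^T\Fish(p_s)\,\dd s\le\liminf_\varepsilon\int_0^T\Fish(p^\varepsilon_s)\,\dd s$, which proves \eqref{eq.14} and hence (S4$'$).
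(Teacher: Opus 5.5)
Your proposal is correct and is essentially the paper's proof. Testing against $(1+\log(p^\varepsilon+\delta))\psi_R$ is exactly the renormalisation by $\beta_\eta'(p^\varepsilon)\psi_R$ in Appendix~\ref{app.fisher} (your $\delta$ is the paper's $\eta$); the drift term then becomes $-\int(\nabla\!\cdot f^\varepsilon)B_\eta(p^\varepsilon)\psi_R$ with $0\le B_\eta(r)=r-\eta\log(1+r/\eta)\le r$, which is what lets the one-sided bound apply, and the same Gaussian entropy comparison with $M$ and monotone convergence in the regularisation parameter follow. The one real variation is the limit $\varepsilon\to0$: you use weak lower semicontinuity of $\|\nabla\sqrt{p^\varepsilon}\|_{L^2}$, which needs only the local strong convergence of Step~7(e), whereas the paper uses the jointly convex integrand $|\xi|^2/\mu$ with global $L^1$ convergence; both work.
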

The proof of this lemma is given in Appendix \ref{app.fisher}.

The above results apply to score-based diffusion models \cite{Song2021,hanxu2024}:
\begin{corollary}[Score-based generative models]\label{cor:diffusion}
Consider the variance-preserving SDE  
\begin{equation}\label{eq:vpbeta}
f(x,s)=-\beta(s)x,\quad  \sigma=\sqrt{2\beta},\qquad 0<\beta_{\min}\le\beta(s)\le\beta_{\max}<\infty\qquad\text{for a.e.\ }s\in[0,T],
\end{equation}
or the variance-exploding SDE ($f\equiv0$) with $\sigma$ satisfying \textup{(E1)}. Let
$p_0\in L^1\cap L^\infty(\R^d)$ satisfy (I2) i.e. have finite entropy and finite second moment. Then:
\begin{enumerate}
\item[\textup{(i)}] \textup{(E1)}, \textup{(D$_{\mathrm L}$)} and \textup{(I)} hold. For the
variance-preserving model \eqref{eq:vpbeta} is exactly what \textup{(E1)} requires, since
$\sigma^2=2\beta$; the upper bound alone does not suffice.
\item[\textup{(ii)}] If, in addition, $p_0$ satisfies the two-sided log-Hessian condition
\eqref{eq:p0hess} of Proposition~\ref{prop:linear}, namely
$-\beta_0I\preceq\nabla^2\log p_0\preceq-\alpha_0I$ with $0<\alpha_0\le\beta_0<\infty$, then
Proposition~\ref{prop:linear} supplies \textup{(S)} on $[0,T]$ and
Theorem~\ref{thm:lagrangian} gives a unique RLF for
$\dot Z_s=-\beta(s)Z_s-\tfrac12\sigma(s)^2\nabla\log p_s(Z_s)$ transporting $p_0$ onto $p_s$.
\item[\textup{(iii)}] Condition \eqref{eq:p0hess} may \emph{not} be omitted: it is not
implied by the hypotheses of \textup{(i)}, and without it \textup{(S)} can fail ; see
Remark~\ref{rem:corfix}. But if $\supp p_0\subset B_R$ then for every $\delta\in(0,T)$, \textup{(S)}
holds on $[\delta,T]$ by Proposition~\ref{prop:earlystop} 
and Theorem~\ref{thm:lagrangian} applies there. The constant in $(S3)$ is
$O(R^2\varsigma_{\min}^{-4})$, the constant in $(S4)$ is
$O(\varsigma_{\min}^{-2})$, and the log-Hessian bound governing
$(S2)$ is
$O(\varsigma_{\min}^{-2}+R^2\varsigma_{\min}^{-4})$.
\end{enumerate}
\end{corollary}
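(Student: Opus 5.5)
Part (i) amounts to checking each hypothesis in turn. (E1) for the variance-exploding model is assumed outright. For the variance-preserving model, $\sigma=\sqrt{2\beta}$, and the two-sided bound $\beta_{\min}\le\beta\le\beta_{\max}$ gives $\epsilon\le\sigma\le\epsilon^{-1}$ with $\epsilon:=\min\{\sqrt{2\beta_{\min}},(2\beta_{\max})^{-1/2},1\}$. The upper bound on $\beta$ alone would permit $\sigma\to0$ and destroy uniform ellipticity, which is why the lower bound is needed.

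For (D$_{\mathrm L}$) we take $f(x,s)=-\beta(s)x$. It is smooth in $x$ with $\nabla f=-\beta I$, so it is locally bounded, giving (D0), and $f\in L^1(\BV_\loc)$, giving (D1), since $\beta\in L^\infty$. Its divergence $\Div f=-d\beta(s)$ is absolutely continuous and spatially constant. Hence $[\Div f]^-=d\beta\le d\beta_{\max}$ lies in $L^1(L^\infty)$, which is (D2). For (D3), $|f|/(1+|x|)\le\beta_{\max}$ belongs to $L^1(L^\infty)$, and in fact (D4) holds with $C_f=\beta_{\max}$. The case $f\equiv0$ is trivial.

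(I1) and (I2) are exactly the stated hypotheses on $p_0$, so (I) holds and (i) is done.

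For (ii), we invoke Proposition~\ref{prop:linear} under \eqref{eq:p0hess}. It yields the log-Hessian bounds along the flow, and from these (S1)--(S4) follow on $[0,T]$. Concretely, the log-concavity and smoothness constants are propagated through the Gaussian convolution and scaling, which gives (S2) and (S3) from a bounded Hessian, and (S4) from a globally Lipschitz score together with a bound on $\nabla\log p_s(0)$. Combined with (i), all hypotheses of Theorem~\ref{thm:lagrangian} hold, so items (ii) and (iii) of that theorem give the unique RLF and the marginal transport.

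For (iii), the negative assertion is handled by pointing to the example of Remark~\ref{rem:corfix}. It is a density with finite entropy and second moment but no Hessian control, for instance one with compact support, where (S4) fails near $s=0$ because of the $1/s$ blow-up outside the support (cf.\ Section~\ref{sec:counterexample}). For the positive assertion we apply Proposition~\ref{prop:earlystop} on $[\delta,T]$. The time-shifted problem starts from $p_\delta$, and $p_\delta$ is bounded and integrable by Theorem~\ref{thm:eulerian}. So Theorem~\ref{thm:lagrangian} applies on $[\delta,T]$ after reparametrising time. The constants follow from the Tweedie representation $\nabla\log p_s(x)=\varsigma_s^{-2}(m_s\E[X_0\mid X_s=x]-x)$, where $m_s=e^{-\int_0^s\beta}$ and $\varsigma_s^2=1-m_s^2$ (for VE, $m_s\equiv1$ and $\varsigma_s^2=\int_0^s\sigma^2$), and $\varsigma_{\min}:=\varsigma_\delta$. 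This representation gives $|\nabla\log p_s|\lesssim\varsigma^{-2}(R+|x|)$. Likewise
\[
\nabla^2\log p_s=-\varsigma^{-2}I+m_s^2\varsigma^{-4}\Cov(X_0\mid X_s),
\]
and since $\Cov(X_0\mid X_s)\preceq R^2I$ this bounds $[\Delta\log p_s]^+$ by $O(R^2\varsigma^{-4})$ and the full Hessian by $O(\varsigma^{-2}+R^2\varsigma^{-4})$.

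The main obstacle is the bookkeeping for (S4), which has the form $A(s)(1+|x|)$: absorbing $R$ into the constant requires $R$-dependence to be hidden in the $O(\cdot)$, and we would state this explicitly.
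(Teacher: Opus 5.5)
Your proposal is correct and follows essentially the same route as the paper. You verify \textup{(E1)}, \textup{(D$_{\mathrm L}$)} and \textup{(I)} directly for (i); you feed Proposition~\ref{prop:linear} into Theorem~\ref{thm:lagrangian} for (ii); and for (iii) you use the compactly supported example of Section~\ref{sec:counterexample} together with the Tweedie bounds behind Proposition~\ref{prop:earlystop}, correctly noting that a $\max\{1,R\}$ factor is hidden in the $O(\cdot)$ of the \textup{(S4)} constant. One line is worth adding: the time-shifted problem also needs \textup{(I2)} for $p_\delta$. This follows at once from $\|p_\delta\|_{L^\infty}\le(2\pi\varsigma_\delta^2)^{-d/2}$ and the explicit second moment $m_\delta^2\int|x|^2p_0+d\varsigma_\delta^2$, and these two facts also give \textup{(I1)} more directly than appealing to Theorem~\ref{thm:eulerian}.
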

\begin{figure}[ht]
\centering
\begin{tikzpicture}[
    node distance=2.2cm,
    >=Latex,
    box/.style={
        draw,
        rounded corners,
        align=center,
        minimum height=1.25cm,
        minimum width=3.5cm,
        inner sep=8pt
    },
    note/.style={
        align=center,
        font=\small,
        text width=3.6cm
    },
    arrow/.style={
        ->,
        thick
    }
]

\node[box] (mu0) {
    $X_0\sim \mu_0$\\[2pt]
     $\operatorname{supp}\mu_0\subset B_R$\\
    Possibly singular
};

\node[box, right=of mu0] (pdelta) {
    $t=\delta  $\\[2pt]
    $\operatorname{Law}(X_\delta)
      =p_\delta\,\mathcal L^d$\\
    $p_\delta>0$ and smooth
};

\node[box, right=of pdelta] (ps) {
    $t>\delta  $\\[2pt]
    $\operatorname{Law}(X_t)
      =p_t\,\mathcal L^d$\\
    $t\in[\delta,T]$
};

\draw[arrow] (mu0) -- node[above, align=center] {
    Diffusion\\
     $0\to \delta$
} (pdelta);

\draw[arrow] (pdelta) -- node[above, align=center] {
    PF-ODE\\
    $Z_{\delta,s}$
} (ps);

\node[note, below=1.15cm of pdelta] (score) {
    Proposition~5.3 gives
    \[
      \nabla\log p_t,
      \qquad t\in[\delta,T],
    \]
    together with \textup{(S1)}--\textup{(S4)}.
};

\node[note, below=1.15cm of ps] (transport) {
    Theorem~4.5, applied to the
    time-shifted problem, yields
    \[
      (Z_{\delta,s})_\#
      (p_\delta\mathcal L^d)
      =p_s\mathcal L^d .
    \]
};

\node[note, below=1.15cm of mu0] (noscore) {
    No density or score
    $\nabla\log\mu_0$ is required.
};

\draw[dashed, ->] (mu0) -- (noscore);
\draw[dashed, ->] (pdelta) -- (score);
\draw[dashed, ->] (ps) -- (transport);

\node[
    draw,
    dashed,
    rounded corners,
    fit=(pdelta)(ps)(score)(transport),
    inner sep=10pt,
    label={[font=\small]above:
        Early-stopped problem on $[\delta,T]$}
] {};

\end{tikzpicture}
\caption{
Diffusion regularizes
the possibly singular initial law by time $\delta$; the probability-flow
ODE is then applied only to the smooth density flow starting from
$p_\delta$.
}
\label{fig:early-stopping}
\end{figure}
\begin{remark}
\label{rem:corfix}Corollary~\ref{cor:diffusion}(ii) requires \eqref{eq:p0hess}, which is \emph{strong
log-concavity} together with a bounded log-Hessian; it is a far stronger requirement than
belonging to $L^1\cap L^\infty$ with finite entropy and second moment. Condition
\eqref{eq:p0hess} constrains the second derivative of $\log p_0$ pointwise  and  forces $p_0$ to be strictly positive with sub-Gaussian tails.
A counterexample will be given in Section~\ref{sec:counterexample}.

\textup{(iii)} states that for $\delta>0$ the situation is  good but
the reason is \emph{not} that smoothing restores \eqref{eq:p0hess}. It does not: Gaussian smoothing of compactly supported multimodal data
need not be log-concave at moderate noise levels, so $p_\delta$ cannot in general be fed back
into Proposition~\ref{prop:linear}. Part \textup{(iii)} relies instead on the direct
verification of Proposition~\ref{prop:earlystop}, which uses only semiconvexity 
 together with compact support to control $\Cov(Y\mid X=x)$ from above. For the variance-exploding model, $\varsigma_{\min}^2\asymp\delta$,
so Proposition~\ref{prop:earlystop} gives the worst-case estimate
\[
\Lambda_\delta
=O(R^2\delta^{-2})
\]
for $(S3)$, while the $(S4)$ constant is $O(\delta^{-1})$.
This worst-case positive-part estimate need not be attained. For
example, the uniform density of Section~6 is log-concave, and Gaussian
convolution preserves log-concavity, so
$[\partial_x^2\log p_t]^+=0$ for every $t>0$ while the negative part grows like $t^{-1}$.
\end{remark}

\begin{corollary}[Time reversal]\label{cor:reverse}
Assume the hypotheses of Theorem~\ref{thm:lagrangian} \emph{and, in addition}, the
 one-sided bound
\begin{equation}\label{eq:divplus}
  \big[\Div v\big]^+\in L^1\big(0,T;L^\infty(\R^d)\big).
\end{equation}
Let
$Y_\tau:=X_{T-\tau}$, so that the reverse PF-ODE has velocity
$\tilde v(y,\tau):=-v(y,T-\tau)$. Then:
\begin{enumerate}
\item[\textup{(i)}] $\tilde v$ satisfies \textup{(R1)} and \textup{(R3)} with the same
constants as $v$, both being invariant under $v\mapsto-v$ and $s\mapsto T-s$.
\item[\textup{(ii)}] $\tilde v$ satisfies \textup{(R2)}, but \emph{not} with the same
constant: since $\Div\tilde v(\cdot,\tau)=-\Div v(\cdot,T-\tau)$ one has
$[\Div\tilde v]^-=[\Div v]^+$, so the relevant norm is
$\Theta_+$ of \eqref{eq:jac}, supplied by \eqref{eq:divplus} and in general unrelated to the
forward constant $\Theta_-$. The reverse RLF $\tilde Z$ exists and is unique, with
compressibility constant $e^{\Theta_+}$.
\item[\textup{(iii)}] Under \eqref{eq:divplus} the two-sided Jacobian bound \eqref{eq:jac}
holds, so $Z(s,\cdot)$ is essentially injective and
$\tilde Z(\tau,\cdot)=Z(T-\tau,\cdot)\circ Z(T,\cdot)^{-1}$ $\Leb$-a.e.
\item[\textup{(iv)}] $\tilde Z(\tau,\cdot)_\#\big(p_T\Leb\big)=p_{T-\tau}\Leb$ for every
$\tau\in[0,T]$; in particular the deterministic reverse sampler transports $p_T$ onto
$p_0$.
\end{enumerate}
\end{corollary}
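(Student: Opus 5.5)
The plan is to apply Theorem~\ref{thm:DL} to the time-reversed field $\tilde v(y,\tau)=-v(y,T-\tau)$, then identify the resulting flow with the marginals $p_{T-\tau}$ by the same uniqueness argument used in Theorem~\ref{thm:lagrangian}(iii).

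\textbf{Step 1: hypotheses (R1)--(R3) for $\tilde v$.} (R1) and (R3) are invariant under a sign change and the reparametrisation $s\mapsto T-s$. The norms $\|v(\cdot,s)\|_{\BV(B_R)}$ and $\|v(\cdot,s)/(1+|x|)\|$ are unchanged by $v\mapsto -v$, and their $L^1(0,T)$ integrals are unchanged by the reflection $s\mapsto T-s$. The decomposition $v=v_1+v_2$ from the proof of Theorem~\ref{thm:lagrangian} transfers as $\tilde v_i(y,\tau)=-v_i(y,T-\tau)$. For (R2):
\begin{itemize}
\item the distributional divergence of $\tilde v(\cdot,\tau)$ is $-(\Div v(\cdot,T-\tau))\Leb$, so it is still absolutely continuous;
\item $[\Div\tilde v]^-=[\Div v]^+$, and \eqref{eq:divplus} supplies exactly $\int_0^T\|[\Div\tilde v]^-\|_\infty\,\dd\tau=\Theta_+$.
\end{itemize}
Theorem~\ref{thm:DL} then gives a unique RLF $\tilde Z$ with compressibility constant $e^{\Theta_+}$. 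This proves (i) and (ii).

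\textbf{Step 2: transport property (iv).} I prove (iv) before (iii). The reversed density $q_\tau:=p_{T-\tau}$ satisfies $\partial_\tau q+\Div(q\tilde v)=0$ in $\mathcal D'$. This follows by reflecting time in the identity $\partial_s p+\Div(pv)=0$ established in the first paragraph of the proof of Theorem~\ref{thm:lagrangian}. By (S1), $q\in L^\infty(L^1\cap L^\infty)$, is narrowly continuous, and satisfies $q\tilde v\in L^1_\loc$.

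The step needing care is the initial datum. I must show $q$ is a distributional solution with initial value $p_T$ in the sense of Definition~\ref{def:class}, which uses test functions not vanishing at $\tau=0$. I would handle this by narrow continuity at $s=T$: multiply by a time cutoff $\eta_k(\tau)\uparrow 1_{[0,T)}$ and pass to the limit.

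Next, set $\nu_\tau:=\tilde Z(\tau,\cdot)_\#(p_T\Leb)$. By Theorem~\ref{thm:DL}(3) it is a bounded solution of the same equation, since $\nu_\tau\le \|p_T\|_\infty e^{\Theta_+}\Leb$. Uniqueness (Theorem~\ref{thm:DL}(1)) gives $\nu_\tau=q_\tau\Leb$ for a.e.\ $\tau$. Narrow continuity of both curves extends the equality to every $\tau$, exactly as in Theorem~\ref{thm:lagrangian}(iii).

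\textbf{Step 3: two-sided Jacobian bound and the formula (iii).} With both $[\Div v]^\pm\in L^1(L^\infty)$, the final assertion of Theorem~\ref{thm:DL} gives \eqref{eq:jac}. It can also be obtained directly by comparing $Z(s,\cdot)_\#\Leb$ with the forward flow and with $\tilde Z$. The lower bound makes $Z(T,\cdot)$ essentially injective; I would argue this through the semigroup property of RLFs.

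For the formula, define $W(\tau,\cdot):=Z(T-\tau,\cdot)\circ Z(T,\cdot)^{-1}$, which is well defined $\Leb$-a.e. by the two-sided bounds. I then check it is an RLF for $\tilde v$:
\begin{itemize}
\item along a.e.\ trajectory, $\tau\mapsto Z(T-\tau,x)$ is absolutely continuous with derivative $-v(Z(T-\tau,x),T-\tau)=\tilde v(\cdot,\tau)$;
\item the compressibility of $W$ follows from the composition of the two pushforward bounds in \eqref{eq:jac}.
\end{itemize}
Uniqueness of the RLF gives $W=\tilde Z$ a.e.

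\textbf{Main obstacle.} The delicate point is the change of variables $y=Z(T,x)$. Property (i) of Definition~\ref{def:rlf} for $W$ must hold for $\Leb$-a.e.\ $y$, not merely a.e.\ $x$. This requires that $Z(T,\cdot)$ maps null sets to null sets and conversely, which is precisely the two-sided bound \eqref{eq:jac}. Without \eqref{eq:divplus} this step fails, which is why the hypothesis is added.
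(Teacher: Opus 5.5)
Your Steps 1 and 2 are sound and follow the paper's route. Items (i)--(ii) are the reflection/sign check and the exchange $[\Div\tilde v]^-=[\Div v]^+$ fed into Theorem~\ref{thm:DL}. Item (iv) is the uniqueness-plus-narrow-continuity argument of Theorem~\ref{thm:lagrangian}(iii) run for $\tilde v$; your cutoff treatment of the datum $p_T$ fills in a detail the paper leaves implicit.

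The gap is in Step 3, at the essential injectivity of $Z(T,\cdot)$, on which the definition of $W$ depends. The lower bound $Z(T,\cdot)_\#\Leb\ge e^{-\Theta_+}\Leb$ does not give injectivity. It only says that the complement of the image of $Z(T,\cdot)$ is $\Leb$-null (essential surjectivity), and, \emph{once} injectivity is known, that images of null sets are null. Two-sided pushforward bounds are compatible with two-to-one maps: the doubling map $x\mapsto 2x\bmod 1$ on $[0,1)$, extended by the identity, satisfies $Z_\#\Leb=\Leb$. The semigroup property $Z(t,\cdot)=Z^s(t,\cdot)\circ Z(s,\cdot)$ only composes forward maps and cannot produce a left inverse. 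Your alternative \emph{direct} derivation of the lower bound by comparison with $\tilde Z$ also already needs $Z(T,\cdot)\circ\tilde Z(T,\cdot)=\mathrm{id}$ a.e. As written, Step 3 is circular.

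The missing ingredient is a.e.\ uniqueness of integral curves, which is the mechanism behind uniqueness of regular Lagrangian flows. Bounded solutions of the continuity equation for $\tilde v$ are unique (Theorem~\ref{thm:DL}(1), which is where \eqref{eq:divplus} enters). Hence every probability measure on $C([0,T];\R^d)$ concentrated on integral curves of $\tilde v$, with marginals bounded by a multiple of $\Leb$, is induced by $\tilde Z$.

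Apply this to the image $\eta$ of $\rho\Leb$ under $x\mapsto\big(\tau\mapsto Z(T-\tau,x)\big)$, with $\rho$ a bounded probability density. Your own computation shows these are integral curves of $\tilde v$ issued from $Z(T,x)$, and forward compressibility gives $(e_\tau)_\#\eta\le\|\rho\|_{L^\infty}e^{\Theta_-}\Leb$. Hence $Z(T-\tau,x)=\tilde Z(\tau,Z(T,x))$ for all $\tau$ and $\Leb$-a.e.\ $x$. At $\tau=T$ this reads $\tilde Z(T,\cdot)\circ Z(T,\cdot)=\mathrm{id}$ a.e., which is essential injectivity. Together with the essential surjectivity given by the lower bound, it yields the formula in (iii) directly, so the detour through $W$ and RLF uniqueness is not needed. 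The same argument on $[0,s]$ treats $Z(s,\cdot)$.

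Alternatively, invoke the invertibility clause at the end of Theorem~\ref{thm:DL}, which is all the paper does. In that case, attribute injectivity to that cited result, not to the lower Jacobian bound.
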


Note that the forward and reverse directions are not symmetric.
Assumption \eqref{eq:divplus} is a genuine addition and cannot be read off from
Theorem~\ref{thm:lagrangian} which is deliberately one-sided. Reversing time
exchanges the roles of $[\Div v]^\pm$, so the reverse flow needs the bound the forward
theory never assumed.
Essential invertibility
of $Z$ and hence the identification of $\tilde Z$ with $Z^{-1}$ is a two-sided
statement and does not follow from the forward hypotheses; both directions require
\eqref{eq:divplus}. A sampler
that runs in one direction only needs a one-sided bound, whereas anything requiring
invertibility (exact likelihood evaluation by   change-of-variables
formula, encode-decode round trips) needs both.

\begin{remark}[Consistency with the reverse SDE]
The reverse SDE \cite{anderson1982,haussmann1986} and the reverse PF-ODE differ in the score coefficient,
$\sigma^2$ versus $\tfrac12\sigma^2$. The factor $\tfrac12$ is what turns
$\tfrac12\sigma^2\Delta p$ into a divergence, removing all randomness while preserving the probability flow
\eqref{eq:fp}. Both share the same marginal flow $(p_t)$ but the reverse SDE also reproduces the transition
statistics of the time-reversed diffusion.
\end{remark}

\section{Regularity of the score function}\label{sec:verify}

Theorem~\ref{thm:lagrangian} requires regularity assumptions --denoted (S)-- on the score function $s(t,x)$. In this section we provide sufficient conditions under which these regularity assumptions hold, in settings relevant for applications.
Alternative sufficient conditions for score regularity have recently been obtained by St\'ephanovitch \cite{Stephanovitch2025ScoreRegularity}.
We begin with a useful lemma:
\begin{lemma}[Log-Hessian bound under Gaussian smoothing]\label{lem:gauss}
Let $q_0$ be a probability density with $-\beta I\preceq\nabla^2\log q_0\preceq-\alpha I$,
$0<\alpha\le\beta<\infty$, and $q:=q_0*\mathcal N(0,\varsigma^2I)$. Then
\[
  -\frac{\beta}{1+\beta\varsigma^2}I\preceq\nabla^2\log q\preceq-\frac{\alpha}{1+\alpha\varsigma^2}I,
\]
so $\opnorm{\nabla^2\log q}\le\beta$ uniformly in $\varsigma>0$ and $|\Delta\log q|\le d\beta$.
\end{lemma}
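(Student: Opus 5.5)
We need to prove the two-sided bound $-\frac{\beta}{1+\beta\varsigma^2}I\preceq\nabla^2\log q\preceq-\frac{\alpha}{1+\alpha\varsigma^2}I$ for $q=q_0*\mathcal N(0,\varsigma^2I)$, and then read off the operator-norm and Laplacian bounds. The plan is to get both sides from the conditional-covariance representation of the log-Hessian of a Gaussian convolution. Writing $q(x)=\int q_0(y)\,\gamma_\varsigma(x-y)\,\dd y$ with $\gamma_\varsigma$ the Gaussian kernel, differentiation under the integral gives
\[
\nabla^2\log q(x)=-\varsigma^{-2}I+\varsigma^{-4}\Cov(Y\mid X=x),
\]
where $Y\sim q_0$ and $X=Y+\varsigma\xi$. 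The conditional law of $Y$ given $X=x$ has density proportional to
\[
\pi_x(y)\propto q_0(y)\exp\!\big(-|x-y|^2/(2\varsigma^2)\big).
\]
Since $-\beta I\preceq\nabla^2\log q_0\preceq-\alpha I$, this conditional density satisfies
\[
-(\beta+\varsigma^{-2})I\preceq\nabla^2\log\pi_x\preceq-(\alpha+\varsigma^{-2})I .
\]

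\emph{Upper bound.} The Brascamp--Lieb inequality for the strongly log-concave measure $\pi_x$ gives
\[
\Cov(Y\mid X=x)\preceq(\alpha+\varsigma^{-2})^{-1}I.
\]
Substituting into the representation,
\[
\nabla^2\log q\preceq-\varsigma^{-2}+\varsigma^{-4}\,\frac{\varsigma^2}{1+\alpha\varsigma^2}=-\frac{\alpha}{1+\alpha\varsigma^2},
\]
after simplification.

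\emph{Lower bound.} We need a matching lower bound on the covariance,
\[
\Cov(Y\mid X=x)\succeq(\beta+\varsigma^{-2})^{-1}I,
\]
which is the Cramér--Rao type bound for log-concave measures with log-Hessian bounded below. This is the one step requiring care. I would prove it by the Cramér--Rao inequality. For a unit vector $e$, integration by parts gives $\E[\langle Y,e\rangle\,\partial_e\log\pi_x(Y)]=-1$. Cauchy--Schwarz then yields
\[
\mathrm{Var}(\langle Y,e\rangle)\ \ge\ 1/\E[(\partial_e\log\pi_x)^2].
\]
By a second integration by parts, $\E[(\partial_e\log\pi_x)^2]=\E[-\partial_{ee}\log\pi_x]\le\beta+\varsigma^{-2}$. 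The boundary terms in both integrations by parts vanish because $\pi_x$ has Gaussian tails, by strong log-concavity. Plugging this in gives
\[
\nabla^2\log q\succeq-\varsigma^{-2}+\frac{\varsigma^{-2}}{1+\beta\varsigma^2}=-\frac{\beta}{1+\beta\varsigma^2}.
\]

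\emph{Consequences.} The maps $\varsigma\mapsto\beta/(1+\beta\varsigma^2)\le\beta$ and $\alpha/(1+\alpha\varsigma^2)>0$ give
\[
\opnorm{\nabla^2\log q}\le\beta,
\]
and taking traces gives $|\Delta\log q|\le d\beta$.

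\emph{Main obstacle.} The main technical point is justifying differentiation under the integral and the regularity of $q_0$. The hypothesis on $\nabla^2\log q_0$ should be read as holding for smooth positive $q_0$ (or distributionally, then mollified). Gaussian tails of $q_0$, which follow from $\nabla^2\log q_0\preceq-\alpha I$, make all moments finite, so dominated convergence justifies the formulas.

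An alternative for the upper bound, avoiding Brascamp--Lieb, is Prékopa--Leindler applied to the jointly $(\alpha,\varsigma^{-2})$-log-concave integrand: the function
\[
(x,y)\mapsto\log q_0(y)-|x-y|^2/(2\varsigma^2)+\tfrac{\alpha}{2(1+\alpha\varsigma^2)}|x|^2
\]
is jointly concave, and marginalizing in $y$ preserves concavity. I would keep the covariance route since it yields both sides uniformly.
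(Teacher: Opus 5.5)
Your proposal is correct and follows essentially the same route as the paper. Both use the Tweedie representation $\nabla^2\log q=-\varsigma^{-2}I+\varsigma^{-4}\Cov(Y\mid X=x)$, Brascamp--Lieb for the upper covariance bound, Cram\'er--Rao for the lower one, and the same simplification; your directional proof of the Cram\'er--Rao step, which implicitly uses $\E[\partial_e\log\pi_x(Y)]=0$ to center before Cauchy--Schwarz, simply fills in a detail the paper only cites.
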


\begin{proof}
Let $Y\sim q_0$, $X=Y+\varsigma\xi\sim q$. From
$\nabla\phi_\varsigma(x-y)=-\varsigma^{-2}(x-y)\phi_\varsigma(x-y)$ one gets
$\nabla\log q(x)=-\varsigma^{-2}(x-\E[Y\mid X=x])$, and with the Tweedie identity
$\nabla_x\E[Y\mid X=x]=\varsigma^{-2}\Sigma(x)$, $\Sigma(x):=\Cov(Y\mid X=x)$,
\begin{equation}
  \nabla^2\log q(x)=-\varsigma^{-2}I+\varsigma^{-4}\Sigma(x).\label{eq.hessian}
\end{equation}
The posterior has density $\propto e^{-U_x}$, $U_x(y)=-\log q_0(y)+\frac{|x-y|^2}{2\varsigma^2}$,
so $(\alpha+\varsigma^{-2})I\preceq\nabla^2U_x\preceq(\beta+\varsigma^{-2})I$. The Brascamp--Lieb inequality
\cite{BL1976} gives $\Sigma\preceq(\alpha+\varsigma^{-2})^{-1}I$; the Cram\'er--Rao bound gives
$$\Sigma\succeq(\E\nabla^2U_x)^{-1}\succeq(\beta+\varsigma^{-2})^{-1}I.$$ Inserting into
\eqref{eq.hessian} and simplifying gives the claim.
\end{proof}

Let us first examine the case where the SDE has linear drift: this is the case for so-called 'variance-preserving' (VP) and 'variance-exploding' (VE) forward diffusions used in generative diffusion models \cite{hanxu2024,Song2021,Song2021mle}:
\begin{proposition}[Linear drift: VP and VE SDEs]\label{prop:linear}
Assume \textup{(E1)}, $f(x,s)=-\beta(s)x$ with $\beta\in L^\infty(0,T)$, and the two-sided
log-Hessian condition
\begin{equation}\label{eq:p0hess}
  -\beta_0I\;\preceq\;\nabla^2\log p_0\;\preceq\;-\alpha_0I,
  \qquad 0<\alpha_0\le\beta_0<\infty .
\end{equation}
 Then
\textup{(S)} holds: with $m_s:=e^{-\int_0^s\beta}\in[m_{\min},m_{\max}]\subset(0,\infty)$,
\[
  \opnorm{\nabla^2\log p_s}\le\frac{\beta_0}{m_s^2}\le\frac{\beta_0}{m_{\min}^2},
  \qquad|\Delta\log p_s|\le\frac{d\beta_0}{m_{\min}^2},
\]
and $|\nabla\log p_s(x)|\le A_*(1+|x|)$ with
$A_*=\max\{\sup_{s\le T}|\nabla\log p_s(0)|,\ \beta_0m_{\min}^{-2}\}<\infty$.
\end{proposition}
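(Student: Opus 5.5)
The plan is to solve the linear SDE explicitly and reduce every statement to Lemma~\ref{lem:gauss}, applied at each fixed time $s$. With $f(x,s)=-\beta(s)x$, variation of constants gives
\[
X_s=m_sX_0+\int_0^s\frac{m_s}{m_r}\sigma(r)\,\dd W_r .
\]
Hence $X_s\overset{d}{=}m_sY+\varsigma_s\xi$ with $Y\sim p_0$ and $\xi\sim\mathcal N(0,I)$ independent, where $\varsigma_s^2=\int_0^s m_s^2m_r^{-2}\sigma(r)^2\,\dd r$. Because $\beta\in L^\infty(0,T)$, we have $m_s\in[m_{\min},m_{\max}]\subset(0,\infty)$. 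Under (E1), $\varsigma_s>0$ for every $s>0$.

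\emph{Step 1: log-Hessian bounds.} The density of $m_sY$ is $q_0^{(s)}(y)=m_s^{-d}p_0(y/m_s)$. By \eqref{eq:p0hess} it satisfies
\[
-\tfrac{\beta_0}{m_s^2}I\preceq\nabla^2\log q_0^{(s)}\preceq-\tfrac{\alpha_0}{m_s^2}I .
\]
Since $p_s=q_0^{(s)}*\mathcal N(0,\varsigma_s^2I)$, Lemma~\ref{lem:gauss} with $\beta=\beta_0/m_s^2$ and $\alpha=\alpha_0/m_s^2$ gives $\opnorm{\nabla^2\log p_s}\le\beta_0/m_s^2\le\beta_0/m_{\min}^2$. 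Taking the trace gives $|\Delta\log p_s|\le d\beta_0/m_{\min}^2$. At $s=0$ the bounds are just \eqref{eq:p0hess} (and $m_0=1$), so no smoothing is needed there. This gives (S3) with $\Lambda$ constant.

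\emph{Step 2: (S1) and (S2).} Strict positivity is immediate: for $s>0$ it follows from Gaussian convolution, and at $s=0$ from \eqref{eq:p0hess}, which forces $p_0>0$. For the $L^\infty$ bound I will use $\|p_s\|_\infty\le m_s^{-d}\|p_0\|_\infty$, which holds because convolution with a probability density does not increase the sup norm. The second moment satisfies $\E|X_s|^2\le 2m_{\max}^2\E|Y|^2+2d\varsigma_s^2$, and this is finite because strong log-concavity gives Gaussian tails. Narrow continuity follows from continuity of $s\mapsto(m_s,\varsigma_s)$. For (S2), $\nabla\log p_s$ is smooth for $s>0$ with Hessian bounded uniformly in $s$, so it lies in $W^{1,\infty}_\loc$. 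Local integrability of $\nabla\log p_s$ itself will follow from the linear-growth bound in Step 3. Joint measurability in $(s,x)$ follows from the explicit convolution formula.

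\emph{Step 3: (S4), the main obstacle.} The Hessian bound gives $|\nabla\log p_s(x)|\le|\nabla\log p_s(0)|+\beta_0m_{\min}^{-2}|x|$. It remains to show $\sup_s|\nabla\log p_s(0)|<\infty$. Tweedie's formula is singular as $\varsigma_s\to0$, so I will not use it. Instead I will use the identity $\int\nabla\log p_s\,p_s\,\dd x=\int\nabla p_s=0$. Its boundary terms vanish by the Gaussian tails of the strongly log-concave $p_s$ (the lower bound in Lemma~\ref{lem:gauss} keeps $p_s$ strongly log-concave, uniformly on $[0,T]$). Subtracting $\nabla\log p_s(0)$ and using the Lipschitz bound gives
\[
|\nabla\log p_s(0)|=\Big|\int\big(\nabla\log p_s(0)-\nabla\log p_s(x)\big)p_s(x)\,\dd x\Big|\le\frac{\beta_0}{m_{\min}^2}\sup_{s\le T}\E|X_s|,
\]
which is finite by Step 2. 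Therefore $A_*$ as in the statement is finite, and (S4) holds with $A\equiv A_*$.
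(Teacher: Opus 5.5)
Your proof is correct. For (S1)--(S3) it follows the paper's argument: variation of constants, the rescaled density $m_s^{-d}p_0(\cdot/m_s)$ with log-Hessian between $-\beta_0m_s^{-2}I$ and $-\alpha_0m_s^{-2}I$, Lemma~\ref{lem:gauss} at each fixed $s$, and the trace for $\Delta\log p_s$.

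The difference is in (S4), in showing $\sup_{s\le T}|\nabla\log p_s(0)|<\infty$. The paper only asserts that $s\mapsto\nabla\log p_s(0)$ is continuous on the compact interval $[0,T]$. You instead combine the zero-mean identity $\int\nabla\log p_s\,p_s\,\dd x=\int\nabla p_s\,\dd x=0$ with the uniform Lipschitz bound. This gives the explicit estimate $\sup_{s\le T}|\nabla\log p_s(0)|\le\beta_0m_{\min}^{-2}\sup_{s\le T}\E|X_s|$, and hence $A_*\le\beta_0m_{\min}^{-2}\max\{1,\sup_{s\le T}\E|X_s|\}$. Your route therefore buys a quantitative constant. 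It also avoids the one delicate point in the paper's version: continuity at $s=0$, where $\varsigma_s\downarrow0$. There one would need an extra argument, for instance posterior concentration in $\nabla\log p_s(x)=\E[\nabla\log q_0^{(s)}(m_sY)\mid X_s=x]$, and the paper does not supply it. The paper's route is shorter only if that continuity is taken for granted.

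A minor simplification: to justify $\int\nabla p_s\,\dd x=0$ you do not need Gaussian tails or uniform strong log-concavity. For each fixed $s$, the linear growth of the score and the finite first moment give $\nabla p_s=p_s\nabla\log p_s\in L^1(\R^d)$. The integral of a derivative of a $W^{1,1}(\R^d)$ function always vanishes.
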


\begin{proof}
Variation of constants gives $X_s\stackrel{d}{=}m_sX_0+\varsigma_s\xi$ with
$\varsigma_s^2=m_s^2\int_0^s\sigma(r)^2m_r^{-2}\dd r>0$ for $s>0$, the noise being isotropic
because $\sigma$ is scalar and $x$-independent. Hence
$p_s=p_0^{(s)}*\mathcal N(0,\varsigma_s^2I)$ with $p_0^{(s)}(x)=m_s^{-d}p_0(x/m_s)$, and
$\nabla^2\log p_0^{(s)}(x)=m_s^{-2}\nabla^2\log p_0(x/m_s)$, so
$-\beta_0m_s^{-2}I\preceq\nabla^2\log p_0^{(s)}\preceq-\alpha_0m_s^{-2}I$.
Lemma~\ref{lem:gauss} with $\alpha=\alpha_0m_s^{-2}$, $\beta=\beta_0m_s^{-2}$,
$\varsigma=\varsigma_s$ gives the Hessian bounds, hence (S3) with constant $\Lambda$; (S2)
follows since $\nabla\log p_s$ is smooth with bounded derivative. The Hessian bound makes
$\nabla\log p_s$ globally Lipschitz, and $s\mapsto\nabla\log p_s(0)$ is continuous on the
compact $[0,T]$, giving (S4); (S4$'$) is then automatic, or alternatively follows from
Lemma~\ref{lem:fisher}. For (S1): positivity and smoothness are immediate for $s>0$;
$\|p_s\|_\infty\le m_{\min}^{-d}\|p_0\|_\infty$; and
$\int|x|^2p_s=m_s^2\int|x|^2p_0+d\varsigma_s^2$ is bounded on $[0,T]$.
\end{proof}
Brigati and Pedrotti \cite{BrigatiPedrotti2025} derive quantitative bounds on $\nabla^2\log p_t $
 for Gaussian convolutions of strongly log-concave measures with Lipschitz perturbations. Their estimates similarly imply spatial Lipschitz regularity of the score and thus classical well-posedness of the probability-flow ODE. Our objective is different: rather than deriving additional sufficient conditions for Lipschitz regularity, we investigate the probability-flow ODE under substantially weaker Sobolev and BV assumptions on the score.

The following proposition deals with the frequently encountered situation where the initial distribution $\mu_0$ may be concentrated on a lower-dimensional compact submanifold (so in particular not absolutely continuous): 
\begin{proposition}[Early-stopping for compactly supported data]
\label{prop:earlystop} Assume \textup{(E1)}.
 Let $X=(X_t)_{t\in[0,T]}$ be the solution of the linear SDE with drift $f(x,s)=-\beta(s)x,\ \beta\in L^\infty(0,T)$:
 $$ dX_t= -\beta(t)X_t dt +\sigma(t) dW_t,\qquad X_0\sim \mu_0$$
where $\mu_0\in{\cal P}(\mathbb{R}^d)$ is a
probability  measure on $\mathbb{R}^d$ with compact support $\supp \mu_0\subset B_R$. No absolute continuity, regularity, or log-concavity of $\mu_0$  is assumed. 
Let $0<\delta<T$ and  $m_s,\varsigma_s$ as in
Proposition~\ref{prop:linear},
\[
\varsigma_{\min}^2:=\inf_{s\in[\delta,T]}\varsigma_s^2>0,\qquad
  m_{\max}:=\sup_{s\le T}m_s<\infty .
\]
Then the density $p_t$ of $X_t$ satisfies \textup{(S)}  on $[\delta,T]$, with the explicit constants
\begin{align}
\bigl\|[\Delta\log p_s]^+\bigr\|_{L^\infty}
\le
d\left(
\frac{m_s^2R^2}{\varsigma_s^4}
-
\frac1{\varsigma_s^2}
\right)^+
\le
d\,m_{\max}^2R^2\varsigma_{\min}^{-4}, &&\text{giving \textup{(S3)}},
  \label{eq:esS3}\\
  \|\nabla^2\log p_s\|_{\mathrm{op}}
\le
\varsigma_s^{-2}
+m_s^2R^2\varsigma_s^{-4}
\le
\varsigma_{\min}^{-2}
+
m_{\max}^2R^2\varsigma_{\min}^{-4}
  &&\text{giving \textup{(S2)}},
  \label{eq:esS2}\\
  |\nabla\log p_s(x)|
\le
\varsigma_{\min}^{-2}
\max\{1,m_{\max}R\}(1+|x|),
  &&\text{giving \textup{(S4)}} .
  \label{eq:esS4}
\end{align}
In addition,
\[
\nabla^2\log p_s\ge-\varsigma_s^{-2}I,
\qquad
\bigl\|[\Delta\log p_s]^-\bigr\|_{L^\infty}
\le d\varsigma_{\min}^{-2}.
\]
\end{proposition}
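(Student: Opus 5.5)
Following the proof of Proposition~\ref{prop:linear}, we represent the marginals explicitly by variation of constants. For every $s\in[\delta,T]$ we have $X_s\stackrel{d}{=}m_sY_0+\varsigma_s\xi$ with $Y_0\sim\mu_0$ and $\xi\sim\mathcal N(0,I)$ independent. Hence $p_s=\nu_s*\phi_{\varsigma_s}$, where $\nu_s:=(m_s\cdot)_\#\mu_0$ is supported in $B_{m_sR}\subset B_{m_{\max}R}$ and $\phi_\varsigma$ is the Gaussian kernel. No density of $\mu_0$ is needed: $p_s$ is $C^\infty$ and strictly positive, and all derivatives can be taken under the integral sign because $\nu_s$ is compactly supported. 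Write $Y:=m_sY_0\sim\nu_s$ and $X=Y+\varsigma_s\xi$. The computation in the proof of Lemma~\ref{lem:gauss} does not use log-concavity, so it holds verbatim:
\[
\nabla\log p_s(x)=-\varsigma_s^{-2}\big(x-\E[Y\mid X=x]\big),\qquad \nabla^2\log p_s(x)=-\varsigma_s^{-2}I+\varsigma_s^{-4}\Sigma_s(x),
\]
where $\Sigma_s(x)=\Cov(Y\mid X=x)$.

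\emph{Posterior bounds.} The posterior of $Y$ given $X=x$ is a probability measure supported in $B_{m_sR}$. Therefore $|\E[Y\mid X=x]|\le m_sR$, and
\[
0\preceq\Sigma_s(x)\preceq\E[YY^\top\mid X=x]\preceq m_s^2R^2I .
\]
Everything follows from these two facts.

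\begin{itemize}
\item \emph{Lower Hessian bound.} Since $\Sigma_s\succeq0$, we get $\nabla^2\log p_s\succeq-\varsigma_s^{-2}I$. Taking traces gives $[\Delta\log p_s]^-\le d\varsigma_s^{-2}\le d\varsigma_{\min}^{-2}$.
\item \emph{Upper Hessian bound and (S3).} Since $\Sigma_s\preceq m_s^2R^2I$, we get $\nabla^2\log p_s\preceq(m_s^2R^2\varsigma_s^{-4}-\varsigma_s^{-2})I$. Taking the trace and the positive part gives \eqref{eq:esS3}. The final inequality there follows by dropping the negative term and using $m_s\le m_{\max}$, $\varsigma_s\ge\varsigma_{\min}$.
\item \emph{Operator norm and (S2).} Combining the two sides gives \eqref{eq:esS2}. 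Then $\nabla\log p_s$ is smooth with bounded derivative, uniformly on $[\delta,T]$, hence in $L^1((\delta,T);W^{1,1}_\loc)$. Joint measurability in $s$ comes from the explicit formula.
\item \emph{Linear growth and (S4).} From the score formula, $|\nabla\log p_s(x)|\le\varsigma_s^{-2}(|x|+m_sR)$, which is at most $\varsigma_{\min}^{-2}\max\{1,m_{\max}R\}(1+|x|)$. This is \eqref{eq:esS4}.
\end{itemize}

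\emph{(S1).} Positivity of $p_s$ is immediate. For boundedness, $\|p_s\|_\infty\le\|\phi_{\varsigma_s}\|_\infty=(2\pi\varsigma_s^2)^{-d/2}\le(2\pi\varsigma_{\min}^2)^{-d/2}$, uniformly on $[\delta,T]$. For the second moment, $\int|x|^2p_s=\E|m_sY_0|^2+d\varsigma_s^2\le m_{\max}^2R^2+d\sup_s\varsigma_s^2$. Here $\varsigma_s^2$ is bounded because $\sigma\in L^\infty$ by (E1) and $m$ is bounded above and below, $\beta$ being in $L^\infty$. Narrow continuity of $s\mapsto p_s\Leb$ follows from continuity of $s\mapsto(m_s,\varsigma_s)$ and dominated convergence in the representation $\E\,g(m_sY_0+\varsigma_s\xi)$.

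\emph{Main difficulty.} The only delicate point is justifying the Tweedie/Hessian identity for a possibly singular $\mu_0$. I would handle it by differentiating $p_s(x)=\int\phi_{\varsigma_s}(x-y)\,\nu_s(\dd y)$ under the integral. This is legitimate because the integrand and all its $x$-derivatives are bounded uniformly for $x$ in compacts and $y\in B_{m_sR}$. One then writes the resulting ratios as posterior moments. Another point to watch is that $\varsigma_{\min}>0$, which holds since $\sigma\ge\epsilon$ gives $\varsigma_s^2\ge c\,s\ge c\delta$. No other step uses anything beyond compactness of the support.
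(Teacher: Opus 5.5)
Your proposal is correct and follows essentially the same route as the paper: the Gaussian-convolution representation $p_s=\nu_s*\phi_{\varsigma_s}$, the Tweedie formulas for the score and log-Hessian, and the two posterior bounds $0\preceq\Sigma_s(x)\preceq m_s^2R^2I$ and $|\E[Y\mid X=x]|\le m_sR$, from which every estimate follows. Your extra justifications (differentiation under the integral for singular $\mu_0$, $\varsigma_{\min}>0$ from $\sigma\ge\epsilon$, narrow continuity and measurability) fill in details the paper leaves implicit.
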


\begin{proof}
Variation of constants gives $p_s\Leb  =\mu_0^{(s)}*\mathcal N(0,\varsigma_s^2I)$, where $\mu_0^{(s)}$
is the pushforward of $\mu_0$ under $y\mapsto m_sy$, supported in $B_{m_sR}$; and
$\varsigma_s^2>0$ for $s>0$ by \textup{(E1)}. Thus $p_s$ is smooth and strictly
positive for $s>0$, which with $\|p_s\|_{L^\infty}\le(2\pi\varsigma_{\min}^2)^{-d/2}$,
$\int|x|^2p_s(x)dx=m_s^2\int|x|^2\mu_0(dx)+d\varsigma_s^2$ and narrow continuity gives \textup{(S1)}.

Write $Y\sim \mu_0^{(s)}$ and $X=Y+\varsigma_s\xi\sim p_s$. The Tweedie identities used in
Lemma~\ref{lem:gauss} read
\begin{equation}\label{eq:tweedie}
  \nabla\log p_s(x)=-\frac{x-\E[Y\mid X=x]}{\varsigma_s^2},
  \qquad
  \nabla^2\log p_s(x)=-\frac{1}{\varsigma_s^2}I+\frac{1}{\varsigma_s^4}\Sigma(x),
  \quad \Sigma(x)=\Cov(Y\mid X=x).
\end{equation}
Since $\Sigma_s(x)\succeq0$, the identity \eqref{eq:tweedie} gives the
semiconvexity estimate
\begin{equation}
    \nabla^2\log p_s(x)\succeq-\varsigma_s^{-2}I.\label{eq.lowerbound}
\end{equation}
This controls the negative part of $\Delta\log p_s$, but the
one-sided condition $(S3)$ requires an upper bound on its positive
part.
The conditional law of $Y$ given $X=x$ is supported in
$B_{m_sR}$. Hence, for every unit vector $\xi$,
\[
\xi^\top\Sigma_s(x)\xi
=
\operatorname{Var}(\xi\cdot Y\mid X=x)
\le
\mathbb E[(\xi\cdot Y)^2\mid X=x]
\le m_s^2R^2,
\]
and therefore
\[
0\preceq\Sigma_s(x)\preceq m_s^2R^2I.
\]
Taking traces in \eqref{eq:tweedie} yields
\[
\Delta\log p_s(x)
=
-\frac d{\varsigma_s^2}
+
\frac{\operatorname{tr}\Sigma_s(x)}{\varsigma_s^4}
\le
-\frac d{\varsigma_s^2}
+
\frac{d\,m_s^2R^2}{\varsigma_s^4}.
\]
Consequently,
\[
[\Delta\log p_s(x)]^+
\le
d\left(
\frac{m_s^2R^2}{\varsigma_s^4}
-
\frac1{\varsigma_s^2}
\right)^+
\le
d\,m_{\max}^2R^2\varsigma_{\min}^{-4},
\]
which proves $(S3)$.
Combining
\[
-\varsigma_s^{-2}I
\preceq
\nabla^2\log p_s(x)
\preceq
\left(
m_s^2R^2\varsigma_s^{-4}
-\varsigma_s^{-2}
\right)I
\]
gives
\[
\|\nabla^2\log p_s\|_{\mathrm{op}}
\le
\varsigma_s^{-2}
+
m_s^2R^2\varsigma_s^{-4},
\]
and hence $(S2)$. Finally,
$\mathbb E[Y\mid X=x]\in B_{m_sR}$, so the first identity in
\eqref{eq:tweedie} gives
\[
|\nabla\log p_s(x)|
\le
\varsigma_s^{-2}(|x|+m_sR),
\]
which yields $(S4)$.
\end{proof}

\begin{proposition}[Nonlinear gradient drift]\label{prop:gradient}
Let $\sigma\equiv\sigma_0$ be constant and $f=-\nabla V$ with $V\in C^3(\R^d)$,
$\inf V>-\infty$, and
\[
  \nabla^2V\succeq-\kappa I,\qquad \|\nabla^2V\|_\infty\le L_2,\qquad
  \|\nabla^3V\|_\infty\le L_3
\]
for some $\kappa\in\R$ and $L_2,L_3<\infty$. Assume in addition the \emph{confinement
condition}
\begin{equation}\label{eq:confine}
  \big\langle\nabla V(x),x\big\rangle\;\ge\;a|x|^2-b
  \qquad\text{for all }x\in\R^d,
\end{equation}
with constants $a>0$ and $b\ge0$. Let $\mu:=e^{-2V/\sigma_0^2}$ and suppose $p_0=\mu h_0$
with $0<c_0\le h_0\le C_0<\infty$ and
$\|\nabla h_0\|_\infty,\|\nabla^2h_0\|_\infty<\infty$. Then \textup{(S)} holds, with
$\opnorm{\nabla^2\log p_s}\le M$ for all $s\in[0,T]$, where $\kappa_+:=\max(\kappa,0)$ and
\[
  M=\frac{2L_2}{\sigma_0^2}
   +\frac{e^{2\kappa_+T}}{c_0}\big(\|\nabla^2h_0\|_\infty+L_3T\|\nabla h_0\|_\infty\big)
   +\frac{e^{2\kappa_+T}}{c_0^2}\|\nabla h_0\|_\infty^2 ,
\]
and moreover
\begin{equation}\label{eq:confmoment}
  \sup_{s\in[0,T]}\int_{\R^d}|x|^2p_s(x)\,\dd x
  \;\le\;\int_{\R^d}|x|^2p_0(x)\,\dd x+\frac{2b+d\ \sigma_0^2}{2a}\;<\;\infty .
\end{equation}
\end{proposition}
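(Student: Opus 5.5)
The natural route is the $h$-transform with respect to the invariant measure $\mu=e^{-2V/\sigma_0^2}$. Write $p_s=\mu h_s$. Since $\tfrac{\sigma_0^2}{2}\nabla\mu=-\mu\nabla V$, the Fokker--Planck equation \eqref{eq:fp} with $f=-\nabla V$ becomes the backward Kolmogorov-type equation
\[
\partial_s h=\tfrac{\sigma_0^2}{2}\Delta h-\nabla V\cdot\nabla h=:\mathcal L h .
\]
Hence $h_s=P_sh_0$, where $P_s$ is the Markov semigroup of the diffusion $dY=-\nabla V(Y)\,ds+\sigma_0\,dW$. This SDE is well posed with a global Lipschitz drift, since $\|\nabla^2V\|_\infty\le L_2$. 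Then
\[
\log p_s=-\tfrac{2V}{\sigma_0^2}+\log h_s,\qquad
\nabla^2\log p_s=-\tfrac{2}{\sigma_0^2}\nabla^2V+\frac{\nabla^2h_s}{h_s}-\frac{\nabla h_s\otimes\nabla h_s}{h_s^2}.
\]
The first term is bounded in operator norm by $2L_2/\sigma_0^2$, which is exactly the first term of $M$. The maximum principle gives $c_0\le h_s\le C_0$, so it remains to bound $\|\nabla h_s\|_\infty$ and $\|\nabla^2h_s\|_\infty$ uniformly for $s\le T$.

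For the first derivatives I would use the synchronous-coupling derivative flow $J_s=\nabla_xY_s^x$. It solves $\dot J=-\nabla^2V(Y)J$, and $\nabla^2V\succeq-\kappa I$ gives $|J_s|\le e^{\kappa_+s}$. Since $\nabla h_s(x)=\E[J_s^\top\nabla h_0(Y_s^x)]$, we get $\|\nabla h_s\|_\infty\le e^{\kappa_+T}\|\nabla h_0\|_\infty$. This is the Bakry--Émery-type gradient commutation $|\nabla P_sh|\le e^{\kappa_+ s}P_s|\nabla h|$. Its square, divided by $c_0^2$, gives the last term of $M$.

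For the second derivatives, differentiating once more gives
\[
\nabla^2h_s(x)=\E\big[J_s^\top\nabla^2h_0(Y_s)J_s+\nabla h_0(Y_s)\cdot K_s\big],
\]
where $K_s=\nabla^2_xY_s$ solves $\dot K=-\nabla^2V(Y)K-\nabla^3V(Y)[J,J]$. Duhamel's formula together with the bound on $J$ gives $|K_s|\le\int_0^s e^{\kappa_+(s-r)}L_3e^{2\kappa_+r}\,dr\le L_3Te^{2\kappa_+T}$. Hence
\[
\|\nabla^2h_s\|_\infty\le e^{2\kappa_+T}\big(\|\nabla^2h_0\|_\infty+L_3T\|\nabla h_0\|_\infty\big),
\]
and dividing by $c_0$ gives the middle term of $M$. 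The main technical obstacle is justifying differentiation under the expectation and the uniqueness and representation $h_s=P_sh_0$ in a class compatible with Theorem~\ref{thm:eulerian}. I would handle this by noting that $V\in C^3$ with bounded second and third derivatives makes $x\mapsto Y^x_s$ twice differentiable in $L^p$, by standard SDE flow theory, with $h_0\in C^2_b$. The uniqueness part of Theorem~\ref{thm:eulerian}(a) identifies $\mu P_sh_0$ with the marginal density once that density is shown to lie in $\mathcal X$.

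With $\opnorm{\nabla^2\log p_s}\le M$ in hand, the assumptions follow:
\begin{itemize}
\item (S2) and (S3) follow with $\Lambda\equiv dM$.
\item (S4) follows from $|\nabla\log p_s(x)|\le|\nabla\log p_s(0)|+M|x|$. Here $|\nabla\log p_s(0)|\le 2|\nabla V(0)|/\sigma_0^2+e^{\kappa_+T}\|\nabla h_0\|_\infty/c_0$.
\item (S1): positivity holds since $h_s\ge c_0$, and $\mu>0$. Integrability of $p_s$ comes from mass conservation. The $L^\infty$ bound follows from $p_s\le C_0e^{-2\inf V/\sigma_0^2}$. Narrow continuity follows from continuity of $h_s$.
\end{itemize}
The moment bound \eqref{eq:confmoment} comes from Itô's formula, made rigorous with truncations as in Step 7(c$'$):
\[
\tfrac{d}{ds}\E|X_s|^2=-2\E\langle\nabla V(X_s),X_s\rangle+d\sigma_0^2\le-2a\E|X_s|^2+2b+d\sigma_0^2 .
\]
Comparison then gives $\E|X_s|^2\le\E|X_0|^2+(2b+d\sigma_0^2)/(2a)$, which is \eqref{eq:confmoment}.
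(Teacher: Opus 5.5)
Your argument follows the paper's proof step for step. It uses the $h$-transform $p_s=\mu P_sh_0$, the pathwise first and second variations with $\|J_s\|\le e^{\kappa_+s}$ and $\|K_s\|\le L_3Te^{2\kappa_+T}$, the decomposition of $\nabla^2\log p_s$, and It\^o's formula for $|X_s|^2$ under \eqref{eq:confine}. Your explicit bound on $|\nabla\log p_s(0)|$ is cleaner than the paper's appeal to Proposition~\ref{prop:linear}.

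\textbf{The gap.} You never show $\int|x|^2p_0\,\dd x<\infty$. This is not a hypothesis, and it does not follow from what you have, namely $\int\mu\le 1/c_0$ and $p_0\le C_0\mu$. In the Cauchy example of Remark~\ref{rem:cauchy}, $\mu=(1+x^2)^{-1}$ is bounded and integrable but has infinite second moment. Without this step:
\begin{itemize}
\item your inequality $\E|X_s|^2\le\E|X_0|^2+(2b+d\sigma_0^2)/(2a)$ holds only in the vacuous sense $\infty\le\infty$;
\item the ``$<\infty$'' in \eqref{eq:confmoment} is unproved;
\item the moment clause of \textup{(S1)}, which is missing from your list of \textup{(S1)} items, is not verified.
\end{itemize}
The fix, as in the paper, is to use \eqref{eq:confine} a second time, at time zero. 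Along rays $x=r\theta$ with $|\theta|=1$,
\[
\tfrac{\dd}{\dd r}V(r\theta)=\tfrac1r\langle\nabla V(r\theta),r\theta\rangle\ge ar-b/r .
\]
Integrating gives $V(x)\ge\tfrac a2|x|^2-b\log|x|-c_1$ for $|x|\ge1$. Hence $\mu(x)\le C|x|^{2b/\sigma_0^2}e^{-a|x|^2/\sigma_0^2}$, and $\int|x|^2p_0\le C_0\int|x|^2\mu<\infty$.

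\textbf{The deferred identification.} The same Gaussian tail bound also settles this step. It gives $\nabla(\mu h_s)=\mu\bigl(\nabla h_s-\tfrac{2}{\sigma_0^2}h_s\nabla V\bigr)\in L^2$, so $\mu P_sh_0\in\mathcal X$. Invoking Theorem~\ref{thm:eulerian}(a) would still require knowing beforehand that the SDE marginals lie in $\mathcal X$. You can avoid this by identifying directly through duality: for $\varphi\in C_c^\infty$,
\[
\E\varphi(X_s)=\int\mu h_0\,P_s\varphi=\int\mu\,(P_sh_0)\,\varphi .
\]
This uses the symmetry of $\tfrac{\sigma_0^2}{2}\Delta-\nabla V\cdot\nabla$ in $L^2(\mu)$. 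The integration by parts behind that symmetry is legitimate because $\int(1+|x|)\mu<\infty$.
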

The proof is given in Appendix \ref{app.5.4}

\begin{remark}[Condition \eqref{eq:confine} cannot be dropped: the Cauchy example]
\label{rem:cauchy}
Without \eqref{eq:confine} the conclusion is false, and it fails in the moment clause of
\textup{(S1)} \emph{alone}. Take $d=1$, $\sigma_0^2=2$ and
\[
  V(x)=\log\big(1+x^2\big),\qquad h_0\equiv\frac1\pi .
\]
All the remaining assumptions hold. $V$ is smooth with $\inf V=0$;
$V''(x)=\tfrac{2(1-x^2)}{(1+x^2)^2}$ has $\min V''=-\tfrac14$ at $x=\pm\sqrt3$ and
$\|V''\|_\infty=2$, so $\kappa=\tfrac14$ and $L_2=2$; $V'''$ is bounded
($L_3\approx2.91$); and $h_0$ is constant, so $c_0=C_0=\tfrac1\pi$ and
$\nabla h_0=\nabla^2h_0=0$. Here $\mu(x)=e^{-2V/\sigma_0^2}=(1+x^2)^{-1}$ with
$\int\mu=\pi$, so
\[
  p_0(x)=\mu(x) h_0=\frac{1}{\pi(1+x^2)}
\]
is the standard \emph{Cauchy} density. Since $h_0$ is constant and $P_s\mathbf 1=\mathbf 1$,
we get $h_s\equiv h_0$ and therefore $p_s\equiv p_0$ for every $s$: the flow is stationary
at the invariant measure. Consequently $\int|x|\,p_t\,\dd x=\infty$  and the
moment requirement in \textup{(S1)} fails.

\textup{(S2)} holds;
$\partial_x^2\log p_s=\tfrac{2(x^2-1)}{(1+x^2)^2}$ is bounded by $2$, giving \textup{(S3)};
and the score $\partial_x\log p_0(x)=-\tfrac{2x}{1+x^2}$ is \emph{bounded} by $1$, so
\textup{(S4)} holds with room to spare. Even the quantitative conclusion is correct: the proposition predicts $M=2L_2/\sigma_0^2=2$, and indeed
$\sup_x|\partial_x^2\log p_s|=2$.

The issue is that the assumptions on $V$ bound $\nabla^2V$ from above
and below but impose no \emph{growth} condition, so $\mu=e^{-2V/\sigma_0^2}$ may have merely
power-law tails; and assumptions on $h_0$ constrain only the \emph{ratio} $p_0/\mu$,
never $\mu$ itself. Note also that $p_0$ is not log-concave here ---
$\partial_x^2\log p_0(3)=\tfrac{4}{25}>0$ --- which is exactly what distinguishes this
setting from Proposition~\ref{prop:linear}, where the two-sided hypothesis
$\nabla^2\log p_0\preceq-\alpha_0I$ with $\alpha_0>0$ is strong log-concavity and
leads to sub-Gaussian tails. Condition
\eqref{eq:confine} excludes the example: here
$\langle V'(x),x\rangle=\tfrac{2x^2}{1+x^2}\le2$ is bounded, so \eqref{eq:confine} fails for
every $a>0$.

An alternative to \eqref{eq:confine}, is to require
$\int|x|^2\mu_0(\dd x)<\infty$ which  then leads to
\eqref{eq:confmoment}. 
\end{remark}

\section{A counterexample}\label{sec:counterexample}
 Theorem ~\ref{thm:lagrangian}, which gives condition for a well-posed ODE (Lagrandian) flow  requires much more assumptions than
Theorem~\ref{thm:eulerian}, which governs the probability flow generated by the SDE \eqref{eq:sde}.
The
gap between them is not an artefact: it reflects a fundamental fact that much more regularity is needed for the PF-ODE to replicate the same probability flow as the SDE: one {\it should not}  assume that because the latter is well-posed, the former will generate the same flow.

To illustrate this gap,
we provide in this section an example 
of well-posed 
SDE whose density flow $(p_t)$ is uniquely determined by
Theorem~\ref{thm:eulerian}, for which the continuity
equation \eqref{eq:cont} is \emph{not} well posed and  admits no regular Lagrangian flow.

Consider standard Brownian motion started from the uniform law on $[-1,1]$:
\begin{equation}\label{eq:ceX}
  d=1,\qquad f\equiv0,\qquad \sigma\equiv1,\qquad p_0=\tfrac12\,\mathbf 1_{[-1,1]}.\end{equation}
Assumptions \textup{(E1)}, \textup{(D$_{\mathrm L}$)} and
\textup{(I1)} hold --- the drift conditions vacuously, since $f\equiv0$ --- and
$p_0\in L^1\cap L^\infty$ with
$\|p_0\|_\infty=\tfrac12$, $\int x^2p_0\,\dd x=\tfrac13$ and
$\Ent(p_0)=-\log 2$. Moreover $\nabla p\in L^2([0,T];L^2)$. Hence
Theorem~\ref{thm:eulerian} applies: $p_t=p_0*\mathcal N(0,t)$ is the unique weak solution
of \eqref{eq:fp} in $\mathcal X$ with initial condition  $p_0$, and the SDE is strongly well posed with
pathwise uniqueness.

Only the energy bound needs comment. Away from the two jump points of $p_0$ the derivative
$\partial_xp_t$ is uniformly bounded on $[\eta,T]$ for each $\eta>0$; near $x=\pm1$ the
initial layer has width $O(\sqrt t)$ and $|\partial_xp_t|=O(t^{-1/2})$ there, so
$\int_{\R}|\partial_xp_t|^2\dd x=O(t^{-1/2})$ and
$\int_0^T\!\int|\partial_xp_t|^2\,\dd x\,\dd t<\infty$.

 Denote by $\Phi,\phi$  the standard normal distribution and density. Convolution of
\eqref{eq:ceX} with $\mathcal N(0,t)$ gives, for $t>0$,
\begin{align}
  p_t(x)&=\tfrac14\Big[\operatorname{erf}\Big(\tfrac{1-x}{\sqrt{2t}}\Big)
                      +\operatorname{erf}\Big(\tfrac{1+x}{\sqrt{2t}}\Big)\Big],
  \label{eq:cep}\\[2pt]
  \partial_xp_t(x)&=\frac{1}{2\sqrt{2\pi t}}
        \Big[e^{-(x+1)^2/2t}-e^{-(x-1)^2/2t}\Big],
  \label{eq:cedp}\\[2pt]
  F_t(x)&:=\int_{-\infty}^xp_t
   =\frac{\sqrt t}{2}\Big[G\Big(\tfrac{x+1}{\sqrt t}\Big)-G\Big(\tfrac{x-1}{\sqrt t}\Big)\Big],
   \qquad G(u):=u\Phi(u)+\phi(u).
  \label{eq:ceF}
\end{align}
Since $f\equiv0$ and $\sigma\equiv1$, the probability-flow velocity of \eqref{eq:cont} is
$v=-\tfrac12\partial_x\log p_t$, i.e.
\begin{equation}\label{eq:cev}
  v(x,t)\;=\;\frac{1}{\sqrt{2\pi t}}\;
  \frac{e^{-(x-1)^2/2t}-e^{-(x+1)^2/2t}}
       {\operatorname{erf}\!\big(\tfrac{1-x}{\sqrt{2t}}\big)
       +\operatorname{erf}\!\big(\tfrac{1+x}{\sqrt{2t}}\big)},
\end{equation}
and the PF-ODE \eqref{eq:pfode} reads $\dot Z_t=v(Z_t,t)$ with $v$ given by \eqref{eq:cev}.
The field is odd, real-analytic and strictly positive on $x>0$ for each $t>0$; it vanishes
at $x=0$ by symmetry.

\begin{lemma}[Behaviour outside the initial support]\label{lem:ce-asymp}
Fix $x>1$. As $t\downarrow0$,
\[
  p_t(x)=\frac{\sqrt t}{2\sqrt{2\pi}\,(x-1)}\,e^{-(x-1)^2/2t}\big(1+O(t)\big),
\]
and consequently
\[
  \partial_x\log p_t(x)=-\frac{x-1}{t}-\frac{1}{x-1}+O(t),
  \qquad
  \partial_x^2\log p_t(x)=-\frac1t+\frac{1}{(x-1)^2}+O(t),
\]
so that
\begin{equation}\label{eq:cevasym}
  v(x,t)=\frac{x-1}{2t}+\frac{1}{2(x-1)}+O(t)\qquad(t\downarrow0).
\end{equation}
The corresponding statements for $x<-1$ follow by symmetry. In particular, for any compact set
$K\subset\{|x|>1\}$,
\begin{equation}\label{eq:ceW11}
  \int_K\big|\partial_x\log p_t\big|\,\dd x=\frac{1}{t}\int_K\big(|x|-1\big)\dd x+O(1),
  \qquad
  \int_K\big|\partial_x^2\log p_t\big|\,\dd x=\frac{|K|}{t}+O(1).
\end{equation}
\end{lemma}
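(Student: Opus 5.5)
The plan is to reduce everything to the Gaussian tail function through the inverse Mills ratio, and to treat the contribution of the far edge $x=-1$ as an exponentially small perturbation. Fix $x>1$ and write $a:=x-1>0$, $u:=a/\sqrt t$. From \eqref{eq:cep}, using $\tfrac12\operatorname{erf}(z/\sqrt2)=\Phi(z)-\tfrac12$, I would rewrite
\[
p_t(x)=\tfrac12\Big[\Phi\big(\tfrac{1-x}{\sqrt t}\big)-\Phi\big(\tfrac{-1-x}{\sqrt t}\big)\Big]
=\tfrac12\,\Phi(-u)\,\big(1-E_t(x)\big),
\]
where $E_t(x):=\Phi(-(x+1)/\sqrt t)/\Phi(-u)$. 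By the elementary Gaussian bounds $\tfrac{u}{1+u^2}\phi(u)\le\Phi(-u)\le\phi(u)/u$, one gets $E_t(x)=O\big(e^{-2x/t}\big)$, with $2x=\tfrac12[(x+1)^2-(x-1)^2]$. This is smaller than any power of $t$, uniformly for $x$ in compact subsets of $(1,\infty)$.

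The leading asymptotics then follow from the classical asymptotic expansion $\Phi(-u)=\tfrac{\phi(u)}{u}\big(1-u^{-2}+3u^{-4}+O(u^{-6})\big)$ as $u\to\infty$. Substituting $u=a/\sqrt t$ gives $\Phi(-u)=\tfrac{\sqrt t}{\sqrt{2\pi}\,a}e^{-a^2/2t}\big(1-t/a^2+O(t^2)\big)$. Multiplying by $\tfrac12$ yields the stated formula for $p_t(x)$, with relative error $O(t)$.

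For the derivatives, I will not differentiate the asymptotic expansion; instead I will use exact identities. Ignoring the factor $1-E_t$, whose logarithmic derivatives are $O(t^{-1}e^{-2x/t})$, we have $\partial_x\log\Phi(-u)=-\tfrac{1}{\sqrt t}\,m(u)$, where $m(u):=\phi(u)/\Phi(-u)$ is the inverse Mills ratio. Differentiating once more, and using the identity $m'(u)=m(u)\big(m(u)-u\big)$, gives $\partial_x^2\log\Phi(-u)=-\tfrac1t\,m'(u)$. Inverting the series above yields
\[
m(u)=u+u^{-1}-2u^{-3}+O(u^{-5}),\qquad m'(u)=1-u^{-2}+O(u^{-4}).
\]
Substituting $u=a/\sqrt t$ gives $\partial_x\log p_t=-a/t-1/a+O(t)$ and $\partial_x^2\log p_t=-1/t+1/a^2+O(t)$. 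Then \eqref{eq:cevasym} follows immediately from $v=-\tfrac12\partial_x\log p_t$, as given in \eqref{eq:cev}. The case $x<-1$ follows from the oddness of $v$ and the evenness of $p_t$.

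The step I expect to need the most care is uniformity. All the remainders, both the Mills-ratio tail $O(u^{-5})$, $O(u^{-4})$ and the $E_t$ corrections, must be bounded uniformly in $x$ on a compact $K\subset\{|x|>1\}$. This holds because on such a $K$ we have $a\ge a_0>0$ and $a\le a_1$. Hence $u\ge a_0/\sqrt t\to\infty$ uniformly, the $O(t)$ constants depend only on $a_0,a_1$, and $e^{-2x/t}\le e^{-2/t}$. To be safe I would use the quantitative remainder for the Mills-ratio series, namely the alternating-series bounds obtained by repeated integration by parts, rather than a merely pointwise asymptotic. Integrating the uniform expansions over $K$ then gives \eqref{eq:ceW11}. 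For the first integral, $|\partial_x\log p_t|=(|x|-1)/t+O(1)$ uniformly on $K$; for the second, $|\partial_x^2\log p_t|=1/t+O(1)$ uniformly on $K$ once $t$ is small enough that $1/t$ dominates $1/a_0^2$.
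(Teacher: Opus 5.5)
Your proposal is correct. Its overall strategy matches the paper's: Gaussian-tail asymptotics at the near edge $x=1$, with the far-edge contribution discarded because it is smaller by a factor $e^{-2x/t}$.

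Where you differ is in how the derivative expansions are obtained. The paper expands $\operatorname{erfc}$ and takes logarithms to get $\log p_t(x)=-\frac{(x-1)^2}{2t}-\log(x-1)+\frac12\log t+c+O(t)$. It then differentiates this asymptotic relation in $x$, implicitly assuming that the $O(t)$ remainder stays $O(t)$ after one and two $x$-derivatives. That is true here, because the remainder is a function of $u=(x-1)/\sqrt{2t}$ whose asymptotic series can be differentiated termwise, but the paper does not argue it.

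Your exact identities $\partial_x\log\Phi(-u)=-m(u)/\sqrt t$ and $\partial_x^2\log\Phi(-u)=-m'(u)/t$, together with $m'=m(m-u)$, remove this issue: only the undifferentiated expansion of the inverse Mills ratio is needed. Your attention to uniformity for $x$ in a compact $K\subset\{|x|>1\}$ (via $u\ge a_0/\sqrt t$ and quantitative alternating remainder bounds) is also what is required to integrate the pointwise expansions and obtain \eqref{eq:ceW11}, a step the paper passes over. The paper's version is shorter; yours actually proves the derivative expansions and their uniformity.

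One small correction: the second logarithmic derivative of $1-E_t$ is of order $t^{-2}e^{-2x/t}$, not $t^{-1}e^{-2x/t}$. This follows from $\partial_x^2E_t=E_t\big[(\partial_x\log E_t)^2+\partial_x^2\log E_t\big]$ with $\partial_x\log E_t\approx-2/t$. It is harmless, since $e^{-2/t}$ beats any power of $t$.
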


\begin{proof}
For $x>1$ rewrite \eqref{eq:cep} as
$p_t(x)=\tfrac14\big[\operatorname{erfc}\big(\tfrac{x-1}{\sqrt{2t}}\big)
-\operatorname{erfc}\big(\tfrac{x+1}{\sqrt{2t}}\big)\big]$ and insert the standard expansion
$\operatorname{erfc}(u)=\frac{e^{-u^2}}{u\sqrt\pi}\big(1-\tfrac{1}{2u^2}+O(u^{-4})\big)$
with $u=\tfrac{x-1}{\sqrt{2t}}\to\infty$; the second term is smaller by a factor
$e^{-2x/t}$ and is absorbed in the error. Taking logarithms,
$\log p_t(x)=-\tfrac{(x-1)^2}{2t}-\log(x-1)+\tfrac12\log t+c+O(t)$, and differentiating in
$x$ gives the stated expansions. Finally $v=-\tfrac12\partial_x\log p_t$.
\end{proof}

\begin{remark}[Consistency with \eqref{eq.lowerbound}]\label{rem:ce-tweedie}
Lemma~\ref{lem:ce-asymp} gives $\partial_x^2\log p_t\to-1/t$ outside $[-1,1]$, which
saturates the universal lower bound $\nabla^2\log p_t\succeq-\tfrac1tI$ of
\eqref{eq.lowerbound}. The example is therefore extremal for the Gaussian-smoothing estimate
of Lemma~\ref{lem:gauss}: it is exactly the case in which the posterior covariance
$\Sigma(x)$ collapses, because conditionally on a far-field observation the initial point
is pinned to the support edge.
\end{remark}

\subsection{Solution of the PF-ODE: the quantile flow}

The following identity is general and of independent interest; it gives closed-form
solutions of the PF-ODE in one dimension for \emph{any} initial density.

\begin{lemma}[One-dimensional PF-ODE $=$ monotone rearrangement]\label{lem:quantile}
Let $d=1$ and let $(p_t)$ solve \eqref{eq:fp} with $\int_{-\infty}^{x}p_t$ finite, and set
$F_t(x):=\int_{-\infty}^xp_t$. Assume $p_t>0$ and $p_tv\in L^1_\loc$ with vanishing flux at
$-\infty$. Then
\begin{equation}\label{eq:ceflux}
  \partial_tF_t(x)=-p_t(x)\,v(x,t),
\end{equation}
and consequently $t\mapsto F_t(Z_t)$ is constant along every absolutely continuous solution
of $\dot Z_t=v(Z_t,t)$. Hence, whenever the right-hand side is defined,
\begin{equation}\label{eq:cequantile}
  Z(t,x)=F_t^{-1}\big(F_0(x)\big).
\end{equation}
\end{lemma}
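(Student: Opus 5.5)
We first derive the flux identity \eqref{eq:ceflux} from the weak form of the equation, then show that $F_t(Z_t)$ is constant along characteristics, and finally invert $F_t$.

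\emph{Step 1: the flux identity.} In one dimension the Fokker--Planck equation \eqref{eq:fp} can be rewritten, exactly as in the first paragraph of the proof of Theorem~\ref{thm:lagrangian}, as the continuity equation
\[
\partial_tp_t+\partial_x(p_tv)=0
\]
in $\mathcal D'$. Here we use $\partial_xp_t=p_t\,\partial_x\log p_t$, which is legitimate because $p_t>0$. Now fix $x$ and integrate this identity over $(-\infty,x]$. Concretely, we test the weak form against $\varphi(s,y)=\chi(s)\,\mathbf 1_{(-\infty,x]}(y)$, approximating the indicator by smooth cut-offs $\psi_n$ that equal $1$ on $[-n,x-1/n]$ and vanish near $x$ and near $-n-1$. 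The term supported near $x$ converges to $-p_t(x)v(x,t)$ at every Lebesgue point, using $p_tv\in L^1_\loc$. The term near $-\infty$ vanishes by the assumed vanishing flux at $-\infty$. This gives $\partial_tF_t(x)=-p_t(x)v(x,t)$, first in $\mathcal D'_t$ for a.e.\ $x$. In the regime where $p_t$ and $v$ are continuous, as in the example \eqref{eq:cev} where everything is real-analytic for $t>0$, the identity holds pointwise and $F$ is $C^1$ in $(t,x)$.

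\emph{Step 2: $F_t$ is conserved along trajectories.} Let $Z$ be an absolutely continuous solution of $\dot Z_t=v(Z_t,t)$. Provided $F$ is $C^1$ (or at least locally Lipschitz) in $(t,x)$, the chain rule gives
\[
\frac{\dd}{\dd t}F_t(Z_t)=\partial_tF_t(Z_t)+p_t(Z_t)\dot Z_t=-p_tv(Z_t,t)+p_t(Z_t)v(Z_t,t)=0.
\]
\textbf{This is the main obstacle.} Justifying the chain rule under only $L^1_\loc$ information requires care. I would first argue on the open set $\{t>0\}$, where $p_t$ is smooth (Gaussian convolution, or parabolic regularity in general), so that $F\in C^1$ and the chain rule is classical. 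I would then pass to $t\downarrow0$ using continuity of $t\mapsto F_t(x)$, since $p_t\to p_0$ narrowly and $F_0$ is continuous because $p_0$ is a density. So $F_t(Z_t)=F_0(Z_0)$ for all $t$.

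\emph{Step 3: inversion.} Since $p_t>0$, the map $F_t$ is strictly increasing and continuous, hence invertible onto its range $(0,1)$. Therefore $Z_t=F_t^{-1}(F_0(x))$ whenever $F_0(x)\in(0,1)$. For the example \eqref{eq:ceX}, that means $|x|<1$. Outside that range the right-hand side of \eqref{eq:cequantile} is undefined, which is the restriction \emph{whenever the right-hand side is defined}.
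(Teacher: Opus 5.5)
Your proposal is correct and follows the same route as the paper. You integrate $\partial_tp_t+\partial_x(p_tv)=0$ over $(-\infty,x)$ to get the flux identity, apply the chain rule to $F_t(Z_t)$, and invert the strictly increasing $F_t$. Your cut-off justification of Step~1 and the restriction to $t>0$ in Step~2 make explicit what the paper's short proof takes for granted.

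One small point concerns the passage $t\downarrow0$. There you need $F_t\to F_0$ uniformly, not just continuity of $t\mapsto F_t(x)$ at fixed $x$, because $Z_t$ moves as well. Uniform convergence is available either from $\|F_t-F_0\|_\infty\le\|p_t-p_0\|_{L^1}$, as the paper uses in Theorem~\ref{thm:ce-fail}(ii), or from P\'olya's theorem via narrow convergence and continuity of $F_0$.
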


\begin{proof}
Integrating $\partial_tp_t+\partial_x(p_tv)=0$ over $(-\infty,x)$ gives \eqref{eq:ceflux}.
Then, along a solution,
$\frac{\dd}{\dd t}F_t(Z_t)=\partial_tF_t(Z_t)+p_t(Z_t)\dot Z_t
=-p_t(Z_t)v(Z_t,t)+p_t(Z_t)v(Z_t,t)=0$.
Thus the \emph{quantile level} $q:=F_t(Z_t)$ is a first integral, and inverting the strictly
increasing $F_t$ yields \eqref{eq:cequantile}.
\end{proof}

For the case \eqref{eq:ceX}, $F_0(x)=\tfrac{x+1}{2}$ on $[-1,1]$, $F_0\equiv0$ on
$(-\infty,-1]$ and $F_0\equiv1$ on $[1,\infty)$, while $F_t$ given by \eqref{eq:ceF} is a
strictly increasing real-analytic bijection
\[
F_t:\R\longrightarrow(0,1)\qquad\text{for every }t>0 .
\]

\subsection{Failure of Lagrangian well-posedness}

\begin{theorem}[The PF continuity equation is not well posed]\label{thm:ce-fail}
Let $v$ be given by \eqref{eq:cev} and $T>0$. 
\begin{enumerate}
\item[\textup{(i)}] $v\notin L^1_\loc\big([0,T]\times\R\big)$. The weak
formulation of Definition~\ref{def:class} is not meaningful for general
$u\in L^\infty((0,T);L^1\cap L^\infty)$, and assumptions \textup{(R1)}, \textup{(R3)} of
Theorem~\ref{thm:DL} fail.
\item[\textup{(ii)}] For every $x_0$ with $|x_0|\ge1$ there is \emph{no} absolutely
continuous $Z:[0,T]\to\R$ with $Z(0)=x_0$ and $Z(t)=x_0+\int_0^tv(Z(s),s)\,\dd s$.
Consequently no regular Lagrangian flow exists: condition \textup{(i)} of
Definition~\ref{def:rlf} fails on the set $\{|x|>1\}$.
\item[\textup{(iii)}] For every $x\in(-1,1)$ there is exactly one such solution, namely
\eqref{eq:cequantile}, and\\ $Z(t,\cdot):(-1,1)\to\R$ is a smooth increasing bijection.
\item[\textup{(iv)}] The Cauchy problem fails to be solvable
for any initial condition $u_0\in L^1\cap L^\infty$  with
$u_0=0$ a.e.\ on $[-1,1]$: there is no
narrowly continuous family of probability measures $(\mu_t)_{t\in[0,T]}$ with
$\mu_0=u_0\Leb$ solving \eqref{eq:cont} and satisfying the integrability
hypothesis of Theorem~\ref{thm:super}.  
\end{enumerate}
\end{theorem}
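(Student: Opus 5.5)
The four parts all rest on the small-time asymptotics of Lemma~\ref{lem:ce-asymp} and on the quantile representation of Lemma~\ref{lem:quantile}; I would prove them in the order (i), (iii), (ii), (iv).

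\emph{Part (i).} Take a compact $K=[a,b]\subset(1,\infty)$. By \eqref{eq:cevasym}, $v(x,t)\ge \frac{x-1}{4t}$ on $K\times(0,t_0]$ for some $t_0>0$. Hence
\[
\int_0^{t_0}\!\int_K|v|\,\dd x\,\dd t\ \ge\ c_K\int_0^{t_0}\frac{\dd t}{t}=\infty .
\]
So $v\notin L^1_\loc$, and the term $u\,v\cdot\nabla\varphi$ in Definition~\ref{def:class} is undefined for, e.g., $u=\mathbf 1_K$. The same computation shows that (R3) fails, since $|v|/(1+|x|)$ is not in $L^1_tL^1_x+L^1_tL^\infty_x$ on $K$. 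For (R1), \eqref{eq:ceW11} gives $\|Dv_t\|(K)\asymp |K|/(2t)$, which is not integrable in $t$.

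\emph{Part (iii).} For $x\in(-1,1)$ we have $F_0(x)=\frac{x+1}{2}\in(0,1)$. Since $F_t:\R\to(0,1)$ is a real-analytic increasing bijection, $Z(t,x):=F_t^{-1}(F_0(x))$ is well defined for $t>0$. It is smooth in $(t,x)$ by the implicit function theorem, because $\partial_xF_t=p_t>0$. Differentiating $F_t(Z_t)=F_0(x)$ and using \eqref{eq:ceflux} shows that $\dot Z=v(Z,t)$. It remains to check that $Z(t,x)\to x$ as $t\downarrow0$ and that $\int_0^t|v(Z_s,s)|\,\dd s<\infty$. I would use that $F_t\to F_0$ locally uniformly and that $F_0$ is strictly increasing near $x$; this gives continuity at $0$. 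Moreover, for $|y|\le 1-\eta$ the explicit formula \eqref{eq:cev} shows $|v(y,t)|\le Ce^{-\eta^2/2t}/\sqrt t$, which is bounded, so $Z$ is absolutely continuous. Uniqueness follows because Lemma~\ref{lem:quantile} forces every absolutely continuous solution to satisfy $F_t(Z_t)=F_0(x)$. Surjectivity onto $\R$ is immediate from the bijectivity of $F_t$ and of $F_0:(-1,1)\to(0,1)$.

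\emph{Part (ii).} This is the main obstacle. Take $x_0\ge1$ and suppose an absolutely continuous solution exists. By Lemma~\ref{lem:quantile}, $F_t(Z_t)$ is constant for $t>0$. Its limit as $t\downarrow0$ must be $F_0(x_0)=1$, provided $F_t\to F_0$ uniformly (true: $\|F_t-F_0\|_\infty=O(\sqrt t)$ by \eqref{eq:ceF}) and $Z_t\to x_0$. This would force $F_t(Z_t)=1$, which is impossible because $F_t<1$ everywhere. The delicate point is that Lemma~\ref{lem:quantile} needs the chain rule on $(0,t]$, where everything is smooth, together with continuity at $0$; both are available for an absolutely continuous curve. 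For $x_0\le -1$ the same argument runs with the value $0$. The case $|x_0|=1$ is covered by the same contradiction, so I would not need the blow-up rate \eqref{eq:cevasym} here. That rate could alternatively serve as a direct argument: near $x_0>1$ the drift $\approx (x-1)/(2t)$ is non-integrable while $Z$ stays near $x_0$. Since $\{|x|>1\}$ has positive measure, condition (i) of Definition~\ref{def:rlf} fails.

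\emph{Part (iv).} Suppose such a $(\mu_t)$ exists. Theorem~\ref{thm:super} gives $\eta$ concentrated on integral curves $\gamma$ of $v$ with $\gamma(0)\sim u_0\Leb$, so $|\gamma(0)|\ge1$ for $\eta$-a.e.\ $\gamma$. Finiteness of $\int\!\int\frac{|v|}{1+|x|}\,\dd\mu_t\,\dd t$ implies $\int_0^T|v(\gamma(s),s)|\,\dd s<\infty$ for $\eta$-a.e.\ $\gamma$, so these curves are absolutely continuous solutions. Part (ii) rules them out, hence $\eta=0$, which contradicts the fact that $\eta$ is a probability measure.
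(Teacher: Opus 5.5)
Your proposal is correct and follows essentially the same route as the paper. Part (i) uses the blow-up $v\sim(x-1)/2t$ off the support, parts (ii) and (iii) use the quantile first integral $F_t(Z_t)$ together with uniform convergence $F_t\to F_0$, and part (iv) combines superposition with (ii). Your additions are minor refinements of the paper's argument: the explicit check that the quantile curve is absolutely continuous near $t=0$ (via exponential smallness of $v$ on compacts of $(-1,1)$), and uniqueness read off directly from $Z_t=F_t^{-1}(F_0(x))$ rather than through local Lipschitz continuity of $v$ on $(0,T]$.
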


\begin{remark}\label{rem:ce-delta}
For every $\delta>0$ the field $v$ is smooth on
$\mathbb R\times[\delta,T]$ and, because $p_0$ is supported in
$[-1,1]$, the first Tweedie identity in~\eqref{eq:tweedie}
gives
\[
|\partial_x\log p_t(x)|
\le \frac{|x|+1}{t}
\le \frac{|x|+1}{\delta},
\]
since $\mathbb E[Y|X=x]\in[-1,1]$.
Hence (S1)--(S4) hold on $[\delta,T]$ and
Theorem~\ref{thm:lagrangian} applies, with constants of order $1/\delta$.
The obstruction is
the \emph{boundary of the support}, not lack of regularity.
\end{remark}
\begin{proof}
\emph{(i)} By \eqref{eq:cevasym}, for $1<a<b$ and $x\in[a,b]$ we have
$v(x,t)\ge\frac{a-1}{2t}(1+o(1))$ as $t\downarrow0$, hence
\[
  \int_0^T\!\!\int_a^b|v(x,t)|\,\dd x\,\dd t
  \;\ge\;\frac{(b-a)(a-1)}{4}\int_0^{t_0}\frac{\dd t}{t}\;=\;+\infty
\]
for $t_0$ small enough. Since (R1) and (R3) both presuppose local integrability in
space-time, both fail.

\emph{(ii)} Suppose $Z$ were such a solution with $Z(0)=x_0$, $|x_0|\ge1$. Since $p_t>0$
and $v$ is real-analytic on $\R\times(0,T]$, Lemma~\ref{lem:quantile} applies on every
$[\eta,T]$ and gives a constant $q\in(0,1)$ with $F_t(Z(t))=q$ for all $t\in(0,T]$. Now
$p_t\to p_0$ in $L^1$ as $t\downarrow0$, so $\|F_t-F_0\|_\infty\le\|p_t-p_0\|_{L^1}\to0$;
and $Z$ is continuous at $0$ with $Z(0)=x_0$. Therefore
\[
  q=\lim_{t\downarrow0}F_t\big(Z(t)\big)=F_0(x_0)\in\{0,1\},
\]
contradicting $q\in(0,1)$. Hence no such $Z$ exists. As $\{|x|>1\}$ has infinite measure,
Definition~\ref{def:rlf}(i) --- which demands trajectories for $\Leb$-a.e.\ $x\in\R$ ---
cannot hold.

\emph{(iii)} For $x\in(-1,1)$ set $q:=F_0(x)=\tfrac{x+1}{2}\in(0,1)$ and
$Z(t,x):=F_t^{-1}(q)$, which is well defined since $F_t:\R\to(0,1)$ is an increasing
bijection. Differentiating $F_t(Z)=q$ and using \eqref{eq:ceflux} gives $\dot Z=v(Z,t)$,
and $Z(t,x)\to F_0^{-1}(q)=x$ as $t\downarrow0$ by the uniform convergence $F_t\to F_0$
together with strict monotonicity, so the initial condition is attained. Uniqueness holds
because $v$ is locally Lipschitz in $x$ on $\R\times(0,T]$, so two solutions agreeing at
some $t>0$ agree throughout; and the first integral $q$ is determined by $x$. Analyticity
and monotonicity of $x\mapsto Z(t,x)$ follow from those of $F_0$ and $F_t^{-1}$, with
\begin{equation}\label{eq:cejac}
  \partial_xZ(t,x)=\frac{p_0(x)}{p_t\big(Z(t,x)\big)}=\frac{1}{2\,p_t(Z(t,x))}>0 .
\end{equation}
Surjectivity onto $\R$ holds because $F_0$ maps $(-1,1)$ onto $(0,1)$ and $F_t^{-1}$ maps
$(0,1)$ onto $\R$.

\emph{(iv)} Suppose such a family $(\mu_t)$ existed. By Theorem~\ref{thm:super} there is
$\eta\in\mathcal P(C([0,T];\R))$ concentrated on absolutely continuous integral curves of
$v$ with $(e_0)_\#\eta=\mu_0=u_0\Leb$. Since $u_0$ vanishes a.e.\ on $[-1,1]$,
$\eta$-a.e.\ curve $\gamma$ satisfies $|\gamma(0)|\ge1$; but by \textup{(ii)} no integral
curve has this property. Hence $\eta$ is the zero measure, contradicting
$\eta(C([0,T];\R))=1$.
\end{proof}
This example \eqref{eq:ceX} separates assumption \textup{(S4)} from \textup{(S4$'$)}:
\begin{enumerate}
\item[\textup{(a)}] \textup{(S2)} and \textup{(S4)} fail at
the same rate. Each is an unweighted local norm of the score integrated in time, and by
Lemma~\ref{lem:ce-asymp} every one of them diverges like $t^{-1}$ as $t\downarrow0$:
\begin{itemize}
\item[--] \textup{(S4)}: for fixed $x>1$, $|\partial_x\log p_t(x)|\ge\tfrac{x-1}{t}(1+o(1))$,
so no admissible $A$ lies in $L^1(0,T)$;
\item[--] \textup{(S2)}: for any compact $K\subset\{|x|>1\}$,
\begin{eqnarray*}
    \big\|\nabla\log p_t\big\|_{W^{1,1}(K)}
  =\int_K\!\big|\partial_x\log p_t\big|+\int_K\!\big|\partial_x^2\log p_t\big|
  \;=\;\frac{c_K}{t}\,\big(1+o(1)\big),
  \\
  c_K:=\int_K\!\big(|x|-1\big)\dd x+|K|>0,  
\end{eqnarray*}
so $t\mapsto\|\nabla\log p_t\|_{W^{1,1}(K)}$ is not in $L^1(0,T)$ and
$\nabla\log p_\cdot\notin L^1\big([0,T];W^{1,1}_\loc\big)$.
\end{itemize}
Smoothness of $\nabla\log p_t$   for each $t>0$ is not sufficient: \textup{(S2)} requires
integrability  \emph{in time}.
\item[\textup{(b)}] By contrast \textup{(S3)} and \textup{(S4$'$)} hold, together with \textup{(S1)} for
$t>0$. Indeed $p_t>0$ is real-analytic for $t>0$, $\|p_t\|_\infty\le\tfrac12$,
$\int x^2p_t=\tfrac13+t$, and by Lemma~\ref{lem:fisher} with $f\equiv0$, $\sigma\equiv1$,
\[
  \tfrac12\int_0^T\Fish(p_t)\,\dd t=\Ent(p_0)-\Ent(p_T)\le-\log2+C\big(1+\tfrac13+T\big)<\infty .
\]
The positivity clause of \textup{(S1)} fails at the single time $t=0$, where
$\operatorname{supp}p_0=[-1,1]$; this is immaterial, every other clause of \textup{(S)} being
an $L^1$-in-time condition.
\end{enumerate}
Hence Theorem~\ref{thm:lagrangian} is not applicable, but
\textup{(S4$'$)} is satisfied. We are thus in the situation described in Remark \ref{rem.iv}.

\subsection{Numerical illustration}\label{sec:ce-numerics}

\begin{figure}[htbp]
  \centering
  \includegraphics[width=\textwidth]{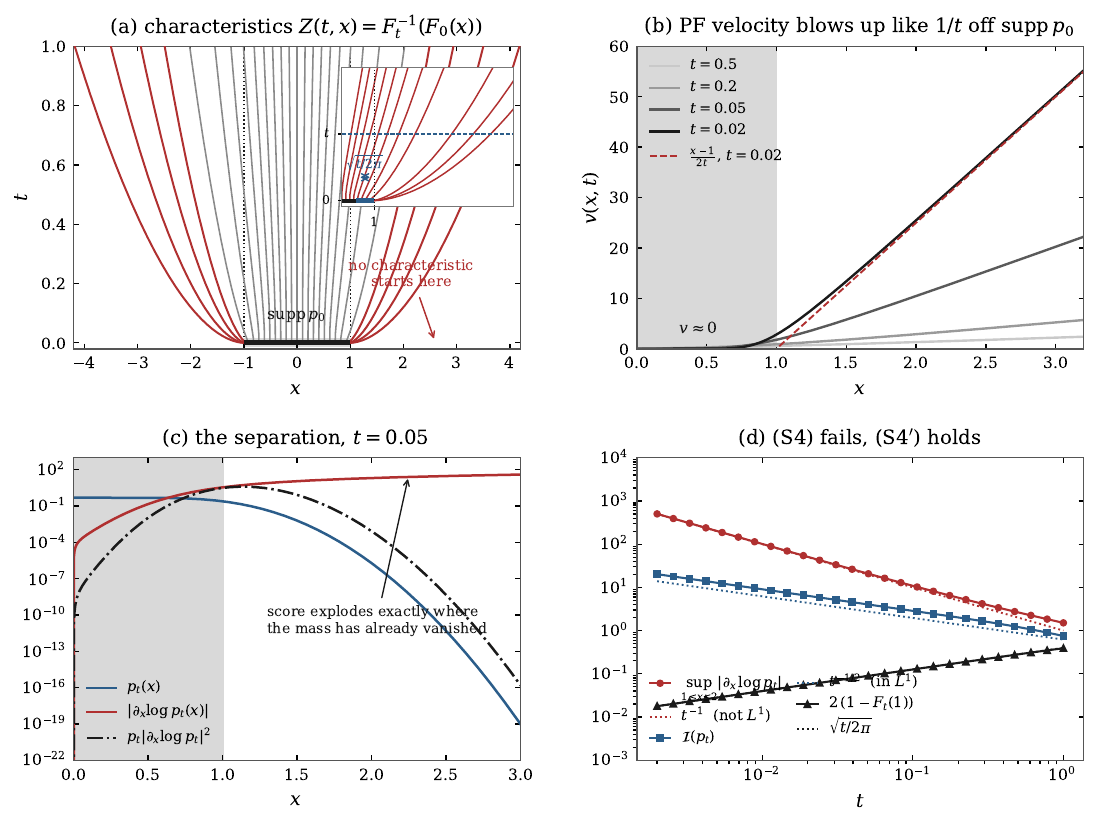}
  \caption{Marginal flow for Brownian motion with uniform initialisation 
  $p_0=\tfrac12\mathbf 1_{[-1,1]}$.\\
  \textbf{(a)} Characteristics $Z(t,x)=F_t^{-1}(F_0(x))$ of the PF-ODE. Every curve
  originates in $\operatorname{supp}p_0=[-1,1]$ (thick bar); the region $|x|>1$ at $t=0$
  emits none, which is Theorem~\ref{thm:ce-fail}(ii) and the reason no regular Lagrangian
  flow exists. Red curves are the near-edge quantiles. \emph{Inset:} the points lying
  outside $[-1,1]$ at time $t$ are the images of an interval of length
  $2(1-F_t(1))\sim\sqrt{t/2\pi}$ --- small, but of \emph{positive} measure, so the
  transport identity itself does not fail.
  \textbf{(b)} The velocity $v(x,t)$ is exponentially small on $[-1,1]$ and diverges like
  $(x-1)/2t$ off the support (dashed, $t=0.02$), giving
  $v\notin L^1_\loc$, Theorem~\ref{thm:ce-fail}(i).
  \textbf{(c)} At $t=0.05$: $|\partial_x\log p_t|$ grows without bound exactly where $p_t$
  has already decayed by twenty orders of magnitude, so the Fisher integrand
  $p_t|\partial_x\log p_t|^2$ stays negligible there.
  \textbf{(d)} The sup-norm of the score diverges   $\sim t^{-1}$ and is not integrable, so
$(S4)$ fails; the local $W^{1,1}$ norm in $(S2)$ also diverges at the
same rate. By contrast, $(S3)$ holds and $I(p_t)\sim t^{-1/2}$  is integrable, so $(S4')$
holds.}
  \label{fig:ce}
\end{figure}
Figure~\ref{fig:ce} displays the four ingredients of the mechanism. 
Writing
$S(t):=\sup_{1\le x\le2}|\partial_x\log p_t(x)|$ and $W(t):=2\big(1-F_t(1)\big)$ for the
length of the emitting interval:

\begin{center}
\begin{tabular}{r|cc|cc|cc}
$t$ & $S(t)$ & $t\,S(t)$ & $\Fish(p_t)$ & $\sqrt t\,\Fish(p_t)$ & $W(t)$ & $\sqrt{t/2\pi}$\\
\hline
$0.5$   & $2.638$   & $1.319$ & $1.210$  & $0.8558$ & $0.281606$ & $0.282095$\\
$0.1$   & $10.860$  & $1.086$ & $2.856$  & $0.9032$ & $0.126157$ & $0.126157$\\
$0.01$  & $100.981$ & $1.010$ & $9.032$  & $0.9032$ & $0.039894$ & $0.039894$\\
$0.002$ & $500.996$ & $1.002$ & $20.196$ & $0.9032$ & $0.017841$ & $0.017841$\\
\end{tabular}
\end{center}

\noindent
The second column confirms $S(t)=t^{-1}(1+o(1))$, so $S\notin L^1(0,T)$ and (S4) fails; the
fourth confirms $\Fish(p_t)=c\,t^{-1/2}(1+o(1))$ with $c\approx0.9032$, so
$\Fish\in L^1(0,T)$ and (S4$'$) holds. The $t^{-1/2}$ rate is what one expects from the two
jump discontinuities of $p_0$: each contributes an initial layer of width $O(\sqrt t)$ on
which $|\partial_x\log p_t|=O(t^{-1/2})$. The last two columns confirm
$W(t)=\sqrt{t/2\pi}\,(1+o(1))$.

It is worthwhile noting that in this example
 the transport identity holds:
    by Theorem~\ref{thm:ce-fail}(iii) the map $Z(t,\cdot)$ is defined on $(-1,1)$, which carries
full $p_0$-measure, and by \eqref{eq:cejac} it satisfies
$Z(t,\cdot)_\#(p_0\Leb)=p_t\Leb$ for every $t\in[0,T]$. The conclusion of
Theorem~\ref{thm:lagrangian}\textup{(iii)} is \emph{true} here even though its assumptions fail. Moreover the compressibility bound
Definition~\ref{def:rlf}(ii) holds on $(-1,1)$, the pushforward density being
$1/|\partial_xZ|=2p_t\le1$.

Points that lie outside $[-1,1]$ at time $t$ are the images of the interval
$\big(2F_t(1)-1,\,1\big)$, whose length is $2\big(1-F_t(1)\big)\sim\sqrt{t/2\pi}$ --- small,
but of \emph{positive} measure for each $t>0$. There is no ``creation of mass from a single
point. Failure of well-posedness is therefore a statement about the Cauchy problem for
arbitrary data, and about the non-existence of a flow on the entire real line; it does not
contradict   the marginal-matching identity for a specific  $p_0$.

\begin{remark}[Generality of the construction]\label{rem:ce-variants}
Nothing in the example depends on the particular shape of $p_0$. Any compactly supported density produces
the same $1/t$ blow-up outside its support, however smooth, since the obstruction is that
$F_0$ attains its extreme values $0$ and $1$ on sets of positive measure. Taking
$p_0=\tfrac12\mathbf 1_{[-2,-1]}+\tfrac12\mathbf 1_{[1,2]}$ places a vacuum region in the
interior, with characteristics filling $(-1,1)$ from both edges. What is required to
restore Theorem~\ref{thm:lagrangian} is a globally positive $p_0$ with tails no lighter
than Gaussian --- which is what the two-sided condition
$-\beta_0I\preceq\nabla^2\log p_0\preceq-\alpha_0I$ of Proposition~\ref{prop:linear}
enforces.
\end{remark}

\section{Stability under a learned score function}\label{sec:stability}

In diffusion models $\nabla\log p_t(x)$ is replaced by a learned score function $s_\theta(x,t)$, and one integrates \cite{Song2021mle,hanxu2024}:
\begin{equation}\label{eq:pfodetheta}
  \tfrac{\dd}{\dd t}Z^\delta_\theta(t,x)=v_\theta\big(Z^\delta_\theta(t,x),t\big),\qquad
  v_\theta(x,t):=f(x,t)-\tfrac12\sigma^2(t) s_\theta(x,t),\qquad Z^\delta_\theta(\delta,x)=x .
\end{equation}

Both flows are started at time $\delta$, not at  $t=0$. All assumptions in this
section are imposed on $[\delta,T]$ only, for the reason set out in
Remark~\ref{rem:corfix}: for data of interest \textup{(S)} simply fails on $(0,\delta)$.
Accordingly we write, for $\delta\in(0,T)$,
\[
  Z^\delta(t,\cdot),\quad Z^\delta_\theta(t,\cdot),\qquad t\in[\delta,T],
\]
for the regular Lagrangian flows of $v$ and of $v_\theta$ on $[\delta,T]$ \emph{normalised
by} $Z^\delta(\delta,x)=Z^\delta_\theta(\delta,x)=x$. The forward flow of
Theorem~\ref{thm:lagrangian} factors as $Z(t,\cdot)=Z^\delta(t,\cdot)\circ Z(\delta,\cdot)$
for $t\ge\delta$, $\Leb$-a.e., and $Z^\delta(t,\cdot)_\#\big(p_\delta\Leb\big)=p_t\Leb$ by
Theorem~\ref{thm:lagrangian}\textup{(iii)} applied on $[\delta,T]$ with initial condition  $p_\delta$.

Theorem~\ref{thm:lagrangian} applied to $v_\theta$ says \eqref{eq:pfodetheta} is well posed;
it says nothing about $\|Z^\delta-Z^\delta_\theta\|$. 
In this section we provide error estimates on the flow using the
 a priori estimates of Crippa--De~Lellis \cite{CDL2008}.
As shown in Theorem \ref{thm:cdl}, this error is controlled by the $L^1$ error on the velocity $\mathcal{E}_v:=\big\|v-v_\theta\big\|_{L^1([\delta,T]\times B_R)},$
 while the score is trained using a probability-weighted $L^2$ norm \eqref{eq.weightedloss} with weight $p_t(x)$. We discuss this discrepancy and its consequences in Section \ref{sec.conversion}.
 
\subsection{Assumptions on the learned score function}
The score function is learned using a parametric model or neural network $s_\theta(x,t)$, for which we make the following assumptions.
\begin{assumption}[Learned score --- (A)]\label{ass:A}
For some $p>1$ and some $\delta\in[0,T)$, all conditions being imposed on $[\delta,T]$.
\begin{enumerate}
\item[(A1)] $s_\theta\in L^1\big([\delta,T];W^{1,p}_\loc(\R^d;\R^d)\big)$;
\item[(A2)] $[\Div v_\theta]^-\in L^1\big([\delta,T];L^\infty\big)$.
\item[(A3)] $|s_\theta(x,t)|\le A_\theta(s)(1+|x|)$ with $A_\theta\in L^1(\delta,T)$.
\end{enumerate}
We also strengthen \textup{(S2)} to:\ 
[(S2$_p$)] $\nabla^2\log p_\cdot\in L^1\big([\delta,T];L^p_\loc\big)$ for the same $p>1$.\\
We \emph{replace} the drift assumptions \textup{(D1)} and \textup{(D3)}
by  stronger  assumptions   on $[\delta,T]$:
\begin{enumerate}
\item[(D1$_p$)] $f\in L^1\big([\delta,T];W^{1,p}_\loc(\R^d;\R^d)\big)$ for the same $p>1$;
\item[(D4)] $|f(x,t)|\le C_f(t)\,(1+|x|)$ for a.e.\ $(x,t)\in \mathbb{R}^d\times [\delta, T]$, with $C_f\in L^1(\delta ,T)$.
\end{enumerate}
\end{assumption}
 Here the  growth bound (D4)  of Assumption~\ref{ass:D} is only required for
$t\in [\delta,T]$. The case $\delta=0$ is permitted but is available only when \textup{(S)} holds up to $t=0$,
which by Remark~\ref{rem:corfix} requires the two-sided condition \eqref{eq:p0hess}.
We write \textup{(D$_{\mathrm S}$)} for \textup{(D0)}, \textup{(D2)}, \textup{(D1$_p$)},
\textup{(D4)}.

Note that these are assumptions on the \emph{architecture}, not the loss function or the training algorithm.
The
score-matching objective constrains $s_\theta$ in $L^2(p_s\dd x\dd s)$; it says nothing
about $\Div s_\theta$. By Theorem \eqref{thm:DL},
(A1)--(A3) ensure that
\eqref{eq:pfodetheta} has a regular Lagrangian flow $Z^\delta_\theta$ with compression constant
$L_\theta=\exp\|[\Div v_\theta]^-\|_{L^1(L^\infty)}$.
Compressibility of the learned flow is therefore an
assumption on the network that must be imposed or verified separately e.g.\ by a
Jacobian-trace constraint. 

\subsection{A stability estimate for the ODE flow}

\begin{theorem}[Log-Lipschitz stability of the PF-ODE]\label{thm:cdl}
Assume \textup{(E1)}, \textup{(D$_{\mathrm S}$)}, \textup{(I)}, \textup{(S)} with \textup{(S2$_p$)}, and
\textup{(A)}. Fix $r>0$ and set
\[
  R:=(1+r)\,\exp\Big(\|G\|_{L^1}+\|G_\theta\|_{L^1}\Big),\qquad
  \mathcal{E}_v:=\big\|v-v_\theta\big\|_{L^1([\delta,T]\times B_R)},
\]
where $G,G_\theta\in L^1$ are the linear-growth rates of $v,v_\theta$ from
Theorem~\ref{thm:lagrangian}\textup{(i)} and \textup{(A3)}. Then for every $\lambda>0$ and
$\eta>0$,
\begin{equation}\label{eq:cdlmeas}
  \Leb\Big(\big\{x\in B_r:\ \big|Z^\delta(T,x)-Z^\delta_\theta(T,x)\big|>\eta\big\}\Big)
  \;\le\;\frac{1}{\log\!\big(1+\eta/\lambda\big)}
  \Big[\,\mathcal C_p\;+\;\frac{L_\theta}{\lambda}\,\mathcal E_v\,\Big],
\end{equation}
where
\[
  \mathcal C_p:=C_{d,p}\,(L+L_\theta)\,|B_R|^{1/p'}\int_\delta^T\!\big\|\nabla v(\cdot,s)\big\|_{L^p(B_{3R})}\dd s,
  \qquad \tfrac1{p'}=1-\tfrac1p .
\]
Optimising with $\lambda=\sqrt{\mathcal E_v}$ gives, for $\mathcal E_v<\eta^2$,
\begin{equation}\label{eq:cdlrate}
  \Leb\Big(\big\{x\in B_r:\ |Z^\delta(T,x)-Z^\delta_\theta(T,x)|>\eta\big\}\Big)
  \;\le\;\frac{\mathcal C_p+L_\theta\sqrt{\mathcal E_v}}{\log\!\big(1+\eta\,\mathcal E_v^{-1/2}\big)}
  \;\xrightarrow[\ \mathcal E_v\to0\ ]{}\;0 .
\end{equation}
The convergence is \emph{logarithmic} in $\mathcal E_v$.
\end{theorem}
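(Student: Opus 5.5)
We follow the Crippa--De~Lellis strategy \cite{CDL2008}, comparing the two regular Lagrangian flows through the logarithmic functional
\[
  \Phi_\lambda(t):=\int_{B_r}\log\Big(1+\frac{|Z^\delta(t,x)-Z^\delta_\theta(t,x)|}{\lambda}\Big)\,\dd x,\qquad t\in[\delta,T],
\]
which vanishes at $t=\delta$ because both flows are normalised by the identity there. Chebyshev's inequality gives
\[
  \Leb\big(\{x\in B_r: |Z^\delta(T,x)-Z^\delta_\theta(T,x)|>\eta\}\big)\le \frac{\Phi_\lambda(T)}{\log(1+\eta/\lambda)},
\]
so \eqref{eq:cdlmeas} reduces to the bound $\Phi_\lambda(T)\le\mathcal C_p+L_\theta\mathcal E_v/\lambda$. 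The existence of $Z^\delta$ with compressibility $L$ comes from Theorem~\ref{thm:lagrangian} applied on $[\delta,T]$. The existence of $Z^\delta_\theta$ with constant $L_\theta$ comes from Theorem~\ref{thm:DL}, since (A1)--(A3) together with (D1$_p$), (D2), (D4) give (R1)--(R3) for $v_\theta$.

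\emph{Localisation.} We first confine trajectories. By the linear growth $|v|\le G(s)(1+|x|)$, from Theorem~\ref{thm:lagrangian}(i) together with (D4), Gr\"onwall gives $1+|Z^\delta(t,x)|\le(1+r)e^{\|G\|_{L^1}}\le R$ for $x\in B_r$. The same holds for $Z^\delta_\theta$ by (A3), so both flows stay in $B_R$.

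\emph{Differentiating $\Phi_\lambda$.} Along a.e.\ trajectory,
\[
  \Phi_\lambda'(t)\le\int_{B_r}\frac{|v(Z^\delta,t)-v_\theta(Z^\delta_\theta,t)|}{\lambda+|Z^\delta-Z^\delta_\theta|}\,\dd x.
\]
We split the numerator as $|v(Z^\delta)-v(Z^\delta_\theta)|+|v(Z^\delta_\theta)-v_\theta(Z^\delta_\theta)|$.

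The second piece is bounded by $\lambda^{-1}\int_{B_r}|v-v_\theta|(Z^\delta_\theta(t,x),t)\,\dd x$. Changing variables with $Z^\delta_\theta(t,\cdot)_\#\Leb\le L_\theta\Leb$ and using that the image lies in $B_R$, this is at most $L_\theta\lambda^{-1}\|v(\cdot,t)-v_\theta(\cdot,t)\|_{L^1(B_R)}$. Integrating in $t$ produces exactly the term $L_\theta\mathcal E_v/\lambda$.

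For the first piece we use the pointwise maximal-function inequality
\[
  |w(y)-w(z)|\le C_d|y-z|\big(M_{2R}\nabla w(y)+M_{2R}\nabla w(z)\big)
\]
for $y,z\in B_R$, valid for $w\in W^{1,1}_\loc$, with $w=v(\cdot,t)$. The $|y-z|$ factor cancels against the denominator. The maximal-function terms are then controlled by changing variables with the two compressibility constants $L$ and $L_\theta$, followed by H\"older:
\[
  \int_{B_R}M\nabla v\le|B_R|^{1/p'}\|M\nabla v\|_{L^p(B_R)}\le C_{d,p}|B_R|^{1/p'}\|\nabla v(\cdot,t)\|_{L^p(B_{3R})}.
\]
This uses the strong $(p,p)$ bound for the local maximal operator with $p>1$. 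Here $\nabla v\in L^1(L^p_\loc)$ follows from (D1$_p$) and (S2$_p$) with $\sigma^2\le\epsilon^{-2}$. Integrating over $[\delta,T]$ gives $\mathcal C_p$.

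\emph{Main obstacle.} The difficult step is making the differentiation rigorous for regular Lagrangian flows, which are only a.e.\ defined and absolutely continuous in $t$, with the maximal inequality only valid off a null set. We would first handle this for smooth approximations of $v,v_\theta$ that preserve the compressibility constants, as in Step (7a) of Theorem~\ref{thm:eulerian}. Alternatively, we can work directly with the integral form $Z(t,x)=x+\int v(Z)\,\dd s$ and use that $\log(1+|\cdot|/\lambda)$ is $1/\lambda$-Lipschitz, passing null sets through the flows via compressibility.

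\emph{Choice of $\lambda$.} Taking $\lambda=\sqrt{\mathcal E_v}$ gives $L_\theta\mathcal E_v/\lambda=L_\theta\sqrt{\mathcal E_v}$, which yields \eqref{eq:cdlrate}. Since $\log(1+\eta\mathcal E_v^{-1/2})\to\infty$ while the numerator stays bounded, the right-hand side tends to $0$ as $\mathcal E_v\to0$.
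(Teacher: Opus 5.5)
Your proposal is correct and follows essentially the same route as the paper. Both arguments confine the two flows in $B_R$ via (D4), (S4), (A3) and Gr\"onwall, then use the Crippa--De~Lellis functional $\Phi_\lambda$, which vanishes at $t=\delta$, splitting its derivative into two terms. The regularity term is controlled by the truncated maximal-function inequality at scale $2R$, the compressibility constants $L,L_\theta$, H\"older and the strong $(p,p)$ bound on $B_{3R}$; the model-error term is pushed forward by $Z^\delta_\theta$ to give $L_\theta\mathcal E_v/\lambda$. Both then conclude with Chebyshev and $\lambda=\sqrt{\mathcal E_v}$. The only addition on your side is that you flag the a.e.\ justification of differentiating $\Phi_\lambda$ along regular Lagrangian flows, which the paper takes for granted; your second option (integral form, with null sets passed through the flows by compressibility) is the standard way to make this rigorous.
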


\begin{proof}
Throughout this proof we abbreviate $Z:=Z^\delta$ and $Z_\theta:=Z^\delta_\theta$. Both are
normalised at time $\delta$ by $Z(\delta,x)=Z_\theta(\delta,x)=x$; this is exactly what makes
$\Phi_\lambda(\delta)=0$ legitimate in Step 2, and it is false for the flows started at time
$0$.

$e(t,x):=|Z(t,x)-Z_\theta(t,x)|$ is absolutely continuous in $t$ with
$\dot{e}\le|v(Z,t)-v_\theta(Z_\theta,t)|$. Split
\[
  \dot{e}\;\le\;\underbrace{\big|v(Z,t)-v(Z_\theta,t)\big|}_{\text{regularity of }v}
  \;+\;\underbrace{\big|(v-v_\theta)(Z_\theta,t)\big|}_{\text{model error}} .
\]
$Z^\delta$ and $Z^\delta_\theta$ are the flows normalised at time $\delta$, so
that $e(\delta,\cdot)\equiv 0$ and $\Phi_\lambda(\delta)=0$ below.

\emph{Step 1: confinement.} Here \textup{(D4)} is used, and \textup{(D3)} would not
suffice: combining \textup{(D4)} with \textup{(S4)} and \textup{(A3)} gives the
 pointwise bounds
\[
  |v(x,s)|\le G(s)(1+|x|),\qquad |v_\theta(x,s)|\le G_\theta(s)(1+|x|),
\]
with $G:=C_f+\tfrac12\epsilon^{-2}A$ and $G_\theta:=C_f+\tfrac12\epsilon^{-2}A_\theta$, both
in $L^1(\delta,T)$. Gr\"onwall then gives
$|Z^\delta(s,x)|,|Z^\delta_\theta(s,x)|\le(1+|x|)e^{\|G\|_{L^1}+\|G_\theta\|_{L^1}}$. Hence both
trajectories started in $B_r$ remain in $B_R$ for all $s\in[\delta,T]$.

\emph{Step 2: the logarithmic functional.} For $\lambda>0$ set
\[
  \Phi_\lambda(s):=\int_{B_r}\log\Big(1+\frac{e(s,x)}{\lambda}\Big)\dd x,
  \qquad \Phi_\lambda(\delta)=0 .
\]
Then
\[
  \Phi_\lambda'(s)\;\le\;\int_{B_r}\frac{\big|v(Z,s)-v(Z_\theta,s)\big|}{\lambda+ e(s,x)}\dd x
  \;+\;\frac1\lambda\int_{B_r}\big|(v-v_\theta)(Z_\theta,s)\big|\dd x
  \;=:\;\mathrm{I}(s)+\mathrm{II}(s).
\]

\emph{Step 3: term $\mathrm{I}$ via maximal functions.} This is where
\textup{(D1$_p$)} is used. By \textup{(D1$_p$)} and \textup{(S2$_p$)},
\[
  \nabla v=\nabla f-\tfrac12\sigma^2\nabla^2\log p\ \in\ L^1\big([\delta,T];L^p_\loc\big),
  \qquad\text{so}\qquad v(\cdot,s)\in W^{1,p}_\loc\ \text{ with }p>1 .
\]
It is essential that \emph{both} summands be Sobolev: under \textup{(D1)} alone the singular
part $D^sf$ would survive into $Dv$ and $v$ would only be $\BV_\loc$.

For $\ell>0$ let $M_\ell g(x):=\sup_{0<u<\ell}\int_{B_u(x)}|g|$ denote the
\emph{truncated} maximal operator at scale $\ell>0$. For $u\in W^{1,p}_\loc$ we have
\begin{equation}\label{eq:lusin}
  |u(x)-u(y)|\le C_d\,|x-y|\Big(M_\ell|\nabla u|(x)+M_\ell|\nabla u|(y)\Big)
  \qquad\text{for a.e.\ }x,y\text{ with }|x-y|\le\ell ,
\end{equation}
together with the local bound $\|M_\ell g\|_{L^p(B_R)}\le C_{d,p}\|g\|_{L^p(B_{R+\ell})}$
for $p>1$. By Step 1 both $Z^\delta(s,x)$ and $Z^\delta_\theta(s,x)$ lie in $B_R$, so their
separation never exceeds $2R$; we may therefore take $\ell=2R$, and the enlarged ball in
\eqref{eq:lusin} is $B_{3R}$. 
Applying \eqref{eq:lusin} and using $e/(\lambda+e)\le1$ 
\[
  \mathrm{I}(s)\le C_d\int_{B_r}\Big(M_{2R}|\nabla v|(Z^\delta(s,x))+M_{2R}|\nabla v|(Z^\delta_\theta(s,x))\Big)\dd x ,
\]
where $\lambda$ was introduced in Step 2.
Pushing forward by the two flows, using their compressibility constants $L,L_\theta$ and
Step 1, and the H\"older inequality
\[
  \mathrm{I}(s)\le C_d\,(L+L_\theta)\int_{B_R}M_{2R}|\nabla v(\cdot,s)|\,\dd y
  \le C_d\,(L+L_\theta)\,|B_R|^{1/p'}\big\|M_{2R}|\nabla v(\cdot,s)|\big\|_{L^p(B_R)} .
\]
The maximal operator is bounded on $L^p$ \emph{for $p>1$}, with constant $C_{d,p}$, whence
$\|M_{2R}|\nabla v|\|_{L^p(B_R)}\le C_{d,p}\|\nabla v\|_{L^p(B_{3R})}$ and
$\int_\delta^T\mathrm{I}(s)\dd s\le\mathcal C_p$. The maximal operator maps $L^1$  into weak-$L^1$: the failure of the $L^p$ bound at $p=1$ 
is the reason why
Crippa--De~Lellis \cite{CDL2008} require $p>1$.

For $\mathrm{II}$, pushing forward by $Z_\theta$ and using Step 1,
\[
  \int_\delta^T\mathrm{II}(s)\dd s\le\frac{L_\theta}{\lambda}
  \int_\delta^T\!\!\int_{B_R}\big|v-v_\theta\big|\,\dd y\,\dd s=\frac{L_\theta}{\lambda}\,\mathcal E_v .
\]
Combining the above we obtain $\Phi_\lambda(T)\le\mathcal C_p+L_\theta\mathcal E_v/\lambda$.
On the set where $e(T,x)>\eta$ the integrand of $\Phi_\lambda(T)$ exceeds
$\log(1+\eta/\lambda)$, giving \eqref{eq:cdlmeas}; \eqref{eq:cdlrate} is obtained by choosing
$\lambda=\sqrt{\mathcal E_v}$.
\end{proof}

\begin{corollary}[$L^1$ and Wasserstein error estimates]\label{cor:wass}
Let
\[
  D_r:=2(1+r)\,e^{\|G\|_{L^1}+\|G_\theta\|_{L^1}}
\]
be the trajectory-diameter bound of Step 1. Then for every $\eta>0$,
\begin{equation}\label{eq:wassL1}
  \big\|Z^\delta(T,\cdot)-Z^\delta_\theta(T,\cdot)\big\|_{L^1(B_r)}
  \le\eta\,|B_r|+D_r\cdot\frac{\mathcal C_p+L_\theta\sqrt{\mathcal E_v}}{\log(1+\eta\,\mathcal E_v^{-1/2})}
  \;=:\;\mathcal R(\eta,r).
\end{equation}
The  initial law is $p_\delta$, \emph{not} $p_0$, since both flows are normalised at
time $\delta$. As $p_\delta$ will in general have full support,
the tail must be estimated separately. Using Step 1 to bound
$|Z^\delta(T,x)-Z^\delta_\theta(T,x)|\le 2e^{\|G\|_{L^1}+\|G_\theta\|_{L^1}}(1+|x|)$ off
$B_r$, one gets for every $r,\eta>0$
\begin{equation}\label{eq:wass}
  W_1\Big(Z^\delta(T,\cdot)_\#p_\delta,\ Z^\delta_\theta(T,\cdot)_\#p_\delta\Big)
  \;\le\;\|p_\delta\|_{L^\infty}\,\mathcal R(\eta,r)
  \;+\;2e^{\|G\|_{L^1}+\|G_\theta\|_{L^1}}\!\!\int_{|x|>r}\!\!(1+|x|)\,p_\delta(x)\,\dd x .
\end{equation}
The tail integral tends to $0$ as $r\to\infty$ because $p_\delta$ has a finite first moment
by \textup{(S1)}, so for \emph{fixed} $\theta$-dependent constants $L_\theta$, $G_\theta$ and
$\mathcal C_p$, optimising over $r$ and $\eta$ makes the right-hand side of \eqref{eq:wass}
tend to $0$ as $\mathcal E_v\to 0$. Since
$Z^\delta(T,\cdot)_\#p_\delta=p_T$ by Theorem~\ref{thm:lagrangian}\textup{(iii)} applied on
$[\delta,T]$, the left-hand side is the $W_1$ distance between $p_T$ and the output of the
deterministic sampler run from $p_\delta$ with the learned score.
\end{corollary}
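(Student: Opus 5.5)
The plan is to derive both estimates from the measure bound \eqref{eq:cdlrate} of Theorem~\ref{thm:cdl}, combined with the uniform trajectory bound of Step~1 in its proof. For \eqref{eq:wassL1}, fix $\eta>0$ and split $B_r$ into $A_\eta:=\{x\in B_r:\ |Z^\delta(T,x)-Z^\delta_\theta(T,x)|\le\eta\}$ and its complement $B_r\setminus A_\eta$.

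\textbf{The $L^1$ estimate.} On $A_\eta$ the integrand is at most $\eta$, which contributes $\eta|B_r|$. For $x\in B_r$, Step~1 confines both trajectories to the ball of radius $(1+r)e^{\|G\|_{L^1}+\|G_\theta\|_{L^1}}$. Hence their separation at time $T$ is at most $D_r$, the diameter of that ball. By \eqref{eq:cdlrate}, with $\lambda=\sqrt{\mathcal E_v}$, the set $B_r\setminus A_\eta$ has Lebesgue measure at most $(\mathcal C_p+L_\theta\sqrt{\mathcal E_v})/\log(1+\eta\mathcal E_v^{-1/2})$. Multiplying this measure by $D_r$ and adding the contribution of $A_\eta$ gives $\mathcal R(\eta,r)$.

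\textbf{The $W_1$ estimate.} Use the coupling $x\mapsto(Z^\delta(T,x),Z^\delta_\theta(T,x))$ with $x\sim p_\delta$. This gives
\[
W_1\le\int|Z^\delta(T,x)-Z^\delta_\theta(T,x)|\,p_\delta(x)\,\dd x .
\]
Split the integral at $|x|=r$. On $B_r$, bound $p_\delta\le\|p_\delta\|_{L^\infty}$, which is finite by (S1), and apply \eqref{eq:wassL1}. Off $B_r$, Step~1 gives $|Z^\delta(T,x)|,|Z^\delta_\theta(T,x)|\le(1+|x|)e^{\|G\|_{L^1}+\|G_\theta\|_{L^1}}$. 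The triangle inequality then yields the factor $2e^{\cdots}(1+|x|)$, and hence the tail term.

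\textbf{Convergence and identification.} The tail vanishes as $r\to\infty$ by dominated convergence, since (S1) gives a finite second moment and hence a finite first moment. For the limit, first pick $r$ so that the tail is below $\varepsilon$. Then pick $\eta$ with $\eta\|p_\delta\|_\infty|B_r|<\varepsilon$. Finally let $\mathcal E_v\to0$, so that $\log(1+\eta\mathcal E_v^{-1/2})\to\infty$ while $\mathcal C_p$ and $L_\theta$ stay fixed; this sends the remaining term to zero. The identification $Z^\delta(T,\cdot)_\#p_\delta=p_T$ follows from Theorem~\ref{thm:lagrangian}(iii) applied to the time-shifted problem on $[\delta,T]$ with initial density $p_\delta$. (S) holds on $[\delta,T]$ by assumption, so that theorem applies there.

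\textbf{Main obstacle.} The only delicate point is the order of the limits. The parameters $R$ and $\mathcal E_v$ in Theorem~\ref{thm:cdl} depend on $r$, and $\mathcal C_p$ grows with $R$. So $r$ must be fixed before sending $\mathcal E_v\to0$, which requires treating $\mathcal E_v=\mathcal E_v(r)$ as tending to zero for each fixed $r$. I would state the limit in exactly this form to avoid any circularity.
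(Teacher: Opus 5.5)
Your proposal is correct and follows the same route as the paper: a split of $B_r$ at level $\eta$, with the exceptional set bounded by \eqref{eq:cdlrate} times the diameter bound $D_r$; then the coupling $x\mapsto(Z^\delta(T,x),Z^\delta_\theta(T,x))$ under $p_\delta$, split at $|x|=r$, with Step 1 controlling the tail. Your explicit order of limits (fix $r$, then $\eta$, then let $\mathcal E_v=\mathcal E_v(r)\to0$ with $D_r$, $\mathcal C_p$ and $L_\theta$ held fixed) makes precise the paper's informal phrase \emph{optimising over $r$ and $\eta$}, and is a useful clarification.
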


\begin{remark}[Convergence along a sequence of learned score functions]
\label{rem:uniformseq}
Theorem~\ref{thm:cdl} and Corollary~\ref{cor:wass} are estimates for a \emph{fixed} $\theta$:
every constant on the right-hand side ( $L_\theta$, $G_\theta$, $D_r$, $\mathcal C_p$, and
the radius $R$ itself) is allowed to depend on $s_\theta$. The assertion that the bound
vanishes as $\mathcal E_v\to 0$ must therefore be understood with those constants held fixed. It
does \emph{not}  yield convergence along a sequence $(s_{\theta_n})$ with
$\mathcal E_v^{(n)}\to0$, and such a statement requires uniform bounds:
\begin{enumerate}
\item[\textup{(U1)}] $\displaystyle\sup_n L_{\theta_n}
  =\sup_n\exp\big\|[\Div v_{\theta_n}]^-\big\|_{L^1(0,T;L^\infty)}<\infty$;
\item[\textup{(U2)}] $\displaystyle\sup_n\|A_{\theta_n}\|_{L^1(\delta,T)}<\infty$ in
  \textup{(A3)}, equivalently $\sup_n\|G_{\theta_n}\|_{L^1}<\infty$;
\item[\textup{(U3)}] $\mathcal C_p<\infty$ for the radius $R$ resulting from \textup{(U2)},
  i.e.\ $\nabla v\in L^1\big([\delta,T];L^p(B_{3R})\big)$.
\end{enumerate}
\textup{(U2)} is not a technicality. By Step 1 the confinement radius is
$R=(1+r)e^{\|G\|_{L^1}+\|G_{\theta}\|_{L^1}}$, so if $\|G_{\theta_n}\|_{L^1}$ is unbounded
then $R_n\to\infty$; and then the domain $[\delta,T]\times B_{R_n}$ on which
$\mathcal E_v^{(n)}$ is measured varies with $n$, as does
$\mathcal C_p=\mathcal C_p(R_n)$ through $\|\nabla v\|_{L^p(B_{3R_n})}$ and through the factor
$|B_{R_n}|^{1/p'}$.  Under \textup{(U1)--(U3)} the radius may be fixed once and for all,
the constants are uniform, and \eqref{eq:cdlrate} gives
\[
  \Leb\Big(\big\{x\in B_r:\ \big|Z^\delta(T,x)-Z^\delta_{\theta_n}(T,x)\big|>\eta\big\}\Big)
  \;\xrightarrow[n\to\infty]{}\;0\qquad\text{for every }\eta>0 .
\]
\textup{(U1)--(U3)} are not determined by the training objective, which controls only
$\mathcal E_{\theta}$.
\end{remark}

\subsection{From the score-matching loss to $\mathcal E_v$}\label{sec.conversion}
The stability estimate in Theorem \ref{thm:cdl} is expressed in terms of an
\emph{unweighted} $L^1$ norm $\mathcal E_v$
, whereas score matching  is based on a probability-weighted $L^2$ norm with weight $p_t(x)$.
We now explore  the relation between these two quantities. Proposition \ref{prop:transfer} relates the training cost to the norm required by the flow estimate, thus relating score approximation to sampler stability, and identifying  the mismatch between the training and stability norms in low density regions.

\begin{proposition}[Relation between velocity error and score matching loss]\label{prop:transfer}
Let
\begin{equation}
    \mathcal {E}_\theta^2:=\int_\delta^T\!\!\int_{\R^d}p_t(x)\,\big|s_\theta(x,t)-\nabla\log p_t(x)\big|^2\,\dd x\,\dd t \label{eq.weightedloss}
\end{equation}
be the  denoising score-matching loss function. If the conversion factor
\[
\kappa_{\delta,R}:=\int_\delta^T\!\!\int_{B_R}\frac{\dd x\,\dd s}{p_s(x)} <\infty
\]
\[{\rm is\  finite, \ then}\qquad
  \mathcal E_v\;=\;\tfrac12\big\|\sigma^2\big(\nabla\log p-s_\theta\big)\big\|_{L^1([\delta,T]\times B_R)}
  \;\le\;\frac{\|\sigma\|_{L^\infty}^2}{2}\mathcal{E}_\theta\;\sqrt{\kappa_{\delta,R}}.
\]
\end{proposition}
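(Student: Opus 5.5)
The argument is a single weighted Cauchy--Schwarz inequality: insert the weight $p_s$ and its inverse into the unweighted $L^1$ norm. First I would identify $v-v_\theta$ pointwise. Both velocity fields share the drift $f$, so
\[
v-v_\theta=-\tfrac12\sigma^2(t)\big(\nabla\log p_t-s_\theta\big)
\]
a.e. on $[\delta,T]\times B_R$. Integrating absolute values gives the stated equality
$\mathcal E_v=\tfrac12\|\sigma^2(\nabla\log p-s_\theta)\|_{L^1([\delta,T]\times B_R)}$.

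Next I would pull out $\sigma$. By (E1), $\sigma^2(t)\le\|\sigma\|_{L^\infty}^2$ for a.e. $t$, so
\[
\mathcal E_v\le \tfrac12\|\sigma\|_{L^\infty}^2\int_\delta^T\!\!\int_{B_R}|\nabla\log p_s-s_\theta|\,\dd x\,\dd s .
\]
By (S1), $p_s>0$ a.e., so I may write the integrand as $\sqrt{p_s}\,|\nabla\log p_s-s_\theta|\cdot p_s^{-1/2}$. Applying Cauchy--Schwarz on $[\delta,T]\times B_R$ bounds the double integral by
\[
\Big(\int_\delta^T\!\!\int_{B_R}p_s|\nabla\log p_s-s_\theta|^2\Big)^{1/2}\Big(\int_\delta^T\!\!\int_{B_R}p_s^{-1}\Big)^{1/2}.
\]
Enlarging the first domain from $B_R$ to $\R^d$ only increases it, giving $\mathcal E_\theta$. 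The second factor is $\sqrt{\kappa_{\delta,R}}$, which is finite by hypothesis. This yields the claim.

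There is no real obstacle. The only points needing care are measurability and a.e. positivity of $p_s$, which come from (S1), and the fact that $\mathcal E_\theta$ may be infinite, in which case the bound is trivial. Finiteness of $\kappa_{\delta,R}$ is an assumption rather than something to prove here. I would note, however, that in the settings of Propositions~\ref{prop:linear} and \ref{prop:earlystop} it holds, because $p_s$ is continuous and strictly positive, hence bounded below on the compact set $[\delta,T]\times\overline{B_R}$.
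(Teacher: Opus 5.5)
Your proposal is correct and matches the paper's proof: you factor the integrand as $\sqrt{p_s}\,|\nabla\log p_s-s_\theta|\cdot p_s^{-1/2}$, apply Cauchy--Schwarz on $[\delta,T]\times B_R$, enlarge $B_R$ to $\R^d$ in the weighted factor, and bound $\sigma^2\le\|\sigma\|_{L^\infty}^2$ using (E1). The only difference is that you spell out the identification of $v-v_\theta$ and the use of a.e.\ positivity of $p_s$ from (S1), which the paper leaves implicit.
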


\begin{proof}
Write $|\nabla\log p_s-s_\theta|=\big(\sqrt{p_s}\,|\nabla\log p_s-s_\theta|\big)\cdot p_s^{-1/2}$
and apply Cauchy--Schwarz on $[\delta,T]\times B_R$; then bound $\sigma^2\le\|\sigma\|^2_{L^\infty}\leq \epsilon^{-2}$ by (E1).
\end{proof}
Finiteness of $\kappa_{\delta,R}$ is an \emph{additional assumption}; it requires a lower bound on the density and does not follow from
\textup{(E1)--(E2)} and \textup{(D$_{\mathrm E}$)}. The following lemma shows that it is satisfied in the case of linear diffusion models.
\begin{lemma}[$\kappa_{\delta,R}<\infty$ for linear drift]\label{lem:kappalinear}
Assume \textup{(E1)} and $f(x,s)=-\beta(s)x$ with $\beta\in L^\infty(0,T)$ --- the
variance-preserving and variance-exploding models of Corollary~\ref{cor:diffusion}. Let $p_0$
be any probability density. With $m_s:=e^{-\int_0^s\beta}$ and
$\varsigma_s^2:=m_s^2\int_0^s\sigma(r)^2m_r^{-2}\dd r$, set
$\varsigma_{\min}^2:=\inf_{s\in[\delta,T]}\varsigma_s^2>0$,
$\varsigma_{\max}^2:=\sup_{s\in[\delta,T]}\varsigma_s^2<\infty$ and
$m_{\max}:=\sup_{s\le T}m_s$. Fix any $\rho>0$ with $\mu_\rho:=\int_{B_\rho}p_0\,\dd x>0$.
Then for all $s\in[\delta,T]$ and $|x|\le R$,
\begin{equation}\label{eq:kappalower}
  p_s(x)\;\ge\;\frac{\mu_\rho}{(2\pi\varsigma_{\max}^2)^{d/2}}
  \exp\!\Big(-\frac{(R+m_{\max}\rho)^2}{2\varsigma_{\min}^2}\Big)\;>\;0 ,
\end{equation}
and consequently
\[
  \kappa_{\delta,R}\;\le\;(T-\delta)\,|B_R|\,
  \frac{(2\pi\varsigma_{\max}^2)^{d/2}}{\mu_\rho}\,
  \exp\!\Big(\frac{(R+m_{\max}\rho)^2}{2\varsigma_{\min}^2}\Big)\;<\;\infty .
\]
\end{lemma}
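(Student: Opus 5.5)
The plan is to use the explicit Gaussian representation of the marginals already used in Propositions~\ref{prop:linear} and~\ref{prop:earlystop}. By variation of constants, $X_s\stackrel{d}{=}m_sX_0+\varsigma_s\xi$ with $\xi\sim\mathcal N(0,I)$ independent of $X_0$. Hence, for any probability density $p_0$,
\[
  p_s(x)=\int_{\R^d}\phi_{\varsigma_s}(x-m_sy)\,p_0(y)\,\dd y,
  \qquad \phi_\varsigma(z):=(2\pi\varsigma^2)^{-d/2}e^{-|z|^2/2\varsigma^2}.
\]
No regularity of $p_0$ is needed, since this identity only uses the law of $X_0$. Under (E1), $\varsigma_s^2\ge m_s^2\epsilon^2\int_0^s m_r^{-2}\,\dd r>0$ for $s\ge\delta$. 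Moreover, $s\mapsto\varsigma_s^2$ is continuous and $m_s$ is bounded above and below because $\beta\in L^\infty$. So $\varsigma_{\min}^2>0$ and $\varsigma_{\max}^2<\infty$ on the compact interval $[\delta,T]$, and $m_{\max}<\infty$.

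The key step is a pointwise lower bound obtained by restricting the integral to $B_\rho$. For $|x|\le R$, $|y|\le\rho$ and $s\in[\delta,T]$, we have $|x-m_sy|\le R+m_{\max}\rho$. Therefore
\[
  \phi_{\varsigma_s}(x-m_sy)\ge(2\pi\varsigma_s^2)^{-d/2}\exp\!\Big(-\frac{(R+m_{\max}\rho)^2}{2\varsigma_s^2}\Big).
\]
We then use two monotonicity facts: $\varsigma_s^2\le\varsigma_{\max}^2$ in the prefactor, and $\varsigma_s^2\ge\varsigma_{\min}^2$ in the exponent. These can be applied separately because each only makes the bound smaller. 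Integrating against $p_0$ over $B_\rho$ produces the factor $\mu_\rho$, which gives \eqref{eq:kappalower}. Note also that $\mu_\rho>0$ for $\rho$ large enough, since $p_0$ is a probability density, so the hypothesis is never vacuous.

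Finally, $1/p_s(x)$ is bounded on $[\delta,T]\times B_R$ by the reciprocal of the constant in \eqref{eq:kappalower}. Integrating over a set of measure $(T-\delta)|B_R|$ gives the stated bound on $\kappa_{\delta,R}$.

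The only mildly delicate point is checking that $\varsigma_{\min}>0$ and $m_{\max}<\infty$ follow from (E1) and $\beta\in L^\infty$. This is immediate from the explicit formulas above, so I do not expect a genuine obstacle here.
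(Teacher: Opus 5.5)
Your proposal is correct and follows the paper's proof essentially line by line: the paper writes $p_s$ as the Gaussian convolution of the rescaled density $p_0^{(s)}(y)=m_s^{-d}p_0(y/m_s)$, which becomes your kernel $\phi_{\varsigma_s}(x-m_sy)$ after the change of variables $y\mapsto m_sy$. It then restricts to $m_sB_\rho$, bounds $|x-y|\le R+m_{\max}\rho$, uses $\varsigma_{\max}$ in the prefactor and $\varsigma_{\min}$ in the exponent, and integrates $1/p_s$ over $[\delta,T]\times B_R$; your check that $\varsigma_{\min}^2>0$ via $\sigma\ge\epsilon$ and the two-sided bounds on $m_s$ just spells out the paper's one-line remark.
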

The proof, given in Appendix \ref{app.linear}, uses Gaussian convolution.

We now give a sufficient condition for $\kappa_{\delta,R}<\infty$ for a general class of nonlinear velocity fields. The proof is given in Appendix \ref{app.nonlinear}.
\begin{lemma}[$\kappa_{\delta,R}<\infty$ for nonlinear drift]
\label{lem:kappabdd}
Let $\sigma$ satisfy
\textup{(E1)--(E2)} and  $f\in L^\infty\big([0,T]\times\R^d;\R^d\big)$ 
with \emph{no} condition  on $\Div f$, which may be a measure with a nonzero singular
part. Let $p_0$ be a probability density with $\mu_\rho:=\int_{B_\rho}p_0\,\dd x>0$ for some
$\rho>0$, and let $p_s$ denote the density of the law of $X_s$, the solution of
\eqref{eq:sde}, which exists and is unique in law under \textup{(E1)}.
Then $\kappa_{\delta,R}<\infty$ for every $0<\delta<T$ and $R>0$.
\end{lemma}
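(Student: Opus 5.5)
Since the integrand of $\kappa_{\delta,R}$ is $1/p_s(x)$ on the compact set $[\delta,T]\times\overline{B_R}$, it suffices to show that $p_s$ is bounded below by a positive constant there. The proof is therefore a \emph{uniform positive lower bound}. With only $f\in L^\infty$ and no control of $\Div f$, the Eulerian energy method of Theorem~\ref{thm:eulerian} is unavailable. The bound is best obtained probabilistically, via Girsanov's theorem, which uses only boundedness of $f$ and never differentiates it.

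\textbf{Step 1: change of measure.} Let $Y_s:=X_0+\int_0^s\sigma(r)\,\dd W_r$ be the driftless process, with the same $X_0\sim p_0$. By (E1) and $f\in L^\infty$, Novikov's condition holds for $u:=f/\sigma$, since $|u|\le\epsilon^{-1}\|f\|_\infty$. Girsanov's theorem then gives weak existence and uniqueness in law for \eqref{eq:sde}, with $\Law(X_\cdot)=\mathcal E_T\cdot\Law(Y_\cdot)$, where
\[
  \mathcal E_T=\exp\Big(\int_0^T u(Y_r,r)\cdot\dd W_r-\tfrac12\int_0^T|u(Y_r,r)|^2\dd r\Big).
\]
For a Borel set $A\subset B_R$, Hölder's inequality applied in the reverse direction gives
\[
  \mathbb P(Y_s\in A)=\E\big[\mathcal E_T^{1/2}\mathcal E_T^{-1/2}\mathbf 1_{A}(X_s)\big]\le \big(\mathbb P(X_s\in A)\big)^{1/2}\big(\E[\mathcal E_T^{-1}\cdot\mathcal E_T]\,\text{-type moment}\big)^{1/2}.
\]
Concretely, I would write $\mathbb P(Y_s\in A)=\tilde\E[\mathcal E_T^{-1}\mathbf 1_A(X_s)]$ under the law of $X$ and apply Cauchy--Schwarz. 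This gives $\mathbb P(Y_s\in A)\le \mathbb P(X_s\in A)^{1/2}\,\tilde\E[\mathcal E_T^{-2}]^{1/2}$, and $\tilde\E[\mathcal E_T^{-2}]\le \exp\big(3\epsilon^{-2}\|f\|_\infty^2T\big)$ because the integrand is bounded. Hence
\[
  \mathbb P(X_s\in A)\ge c_T\,\mathbb P(Y_s\in A)^2,\qquad c_T:=e^{-3\epsilon^{-2}\|f\|_\infty^2T}.
\]

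\textbf{Step 2: Gaussian lower bound for $Y$.} The law of $Y_s$ is $p_0*\mathcal N(0,a_sI)$ with $a_s=\int_0^s\sigma^2\in[\epsilon^2\delta,\epsilon^{-2}T]$ for $s\ge\delta$. Exactly as in Lemma~\ref{lem:kappalinear}, this gives $q_s(x)\ge \mu_\rho(2\pi\epsilon^{-2}T)^{-d/2}\exp\big(-(R+\rho)^2/(2\epsilon^2\delta)\big)=:c_1$ on $B_R$. Applying Step 1 to small balls $A=B_h(x)$ gives $\mathbb P(X_s\in B_h(x))\ge c_T c_1^2|B_h|^2$. This is quadratic in $|B_h|$, so it is not directly a density bound.

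\textbf{Step 3: the main obstacle.} The square from Cauchy--Schwarz destroys the density scale, and this is the main difficulty. I would fix it with Hölder exponents $q\to1$ and better conditioning. A cleaner alternative is to write $p_s(x)=\int p_{s-\tau}(y)\,\Gamma(s-\tau,y;s,x)\,\dd y$ and use a lower Gaussian (Aronson-type) bound for the transition density $\Gamma$ of the SDE with bounded measurable drift and scalar, time-dependent, uniformly elliptic diffusion:
\[
  \Gamma(r,y;s,x)\ge C^{-1}(s-r)^{-d/2}e^{-C|x-y|^2/(s-r)}.
\]
This two-sided estimate is classical for bounded drift, via the parametrix or via Girsanov combined with a bridge argument. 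Here (E2) guarantees the Hölder-in-time regularity for the parametrix; this is the step where (E2) enters. With $\tau=\delta/2$, tightness of $p_{s-\tau}$ gives mass at least $\tfrac12$ in a ball $B_{R'}$ uniformly in $s$, by moment bounds from $f$ bounded. Combining this with the Gaussian kernel bound gives
\[
  \inf_{[\delta,T]\times B_R}p_s\ge C^{-1}\delta^{-d/2}e^{-C(R+R')^2/\delta}>0,
\]
and hence $\kappa_{\delta,R}\le (T-\delta)|B_R|/\inf p_s<\infty$. The step I expect to be hardest is justifying the Aronson lower bound without any divergence condition on $f$. I would first try the Girsanov/Brownian-bridge route, conditioning $Y$ on its endpoint so that the density ratio can be bounded pointwise, and fall back on the parametrix under (E2) if that fails.
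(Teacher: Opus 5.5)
Your final argument in Step~3 is the paper's: a Gaussian (Aronson) lower bound for the fundamental solution, integrated against the mass of the law in a ball. That bound needs no condition on $\nabla\cdot f$, because the Fokker--Planck operator $\partial_t p-\nabla\cdot(\tfrac12\sigma^2\nabla p-fp)$ is in divergence form with bounded measurable lower-order coefficient $f$. The paper applies the bound directly between times $0$ and $s$ using the mass $\mu_\rho$ of $p_0$ in $B_\rho$, so your Chapman--Kolmogorov/tightness step is unnecessary, and Steps~1--2 can be dropped (no choice of H\"older exponents recovers the density scale there; only the endpoint conditioning you mention would).
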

Note that we do not assume \eqref{eq:divac}. We use Aronson's estimates, which require only that the divergence-form operator have bounded
measurable coefficients, a condition on $f$ itself and not on its derivative. Thus an example such as
$f(x,t)=-\operatorname{sign}(x)$ in Remark~\ref{rem:divsing}, which is excluded from
Theorem~\ref{thm:eulerian} precisely because $\Div f=-2\delta_0$ is singular, is covered
here.

Lemmas~\ref{lem:kappalinear} and~\ref{lem:kappabdd} cover disjoint situations.
Bounded $f$ excludes the variance-preserving model, whose drift $-\beta(s)x$ is unbounded;
Lemma~\ref{lem:kappalinear} covers it, but only because the linear structure makes $p_s$ an
explicit Gaussian convolution and no heat-kernel theory is needed. Conversely
Lemma~\ref{lem:kappabdd} allows drifts far outside the linear class --- including the
$f=-\operatorname{sign}$ field of Remark~\ref{rem:divsing}, whose divergence is a measure ---
at the price of global boundedness.

In the intermediate case of nonlinear drifts with linear growth one may obtain similar statements for $f\in L^q\big([0,T];L^r(\R^d)\big)$ with
$\tfrac dr+\tfrac2q<1$; see \cite{CKP2012}. 

\begin{remark}\label{rem:kappa}
In these examples, $\kappa_{\delta,R}$ is finite but its size is governed by $\inf_{[\delta,T]\times B_R}p_s$,
which is exponentially small in $R^2/\varsigma_\delta^2$ by the Gaussian lower bound and
degenerates as $\delta\downarrow0$. So the score can be learned to high accuracy in high density regions yet 
still have low accuracy on the low-density regions.  This discrepancy is detected by the unweighted norm, which is
precisely the norm entering the flow-stability estimate. The chain
\[
  \underbrace{\mathcal{E}_\theta}_{\text{trainable}}
  \;\xrightarrow[\ \times\sqrt{\kappa_{\delta,R}}\ ]{\text{Prop.\ \ref{prop:transfer}}}\;
  \underbrace{\mathcal E_v}_{L^1}
  \;\xrightarrow[\ \text{logarithmic}\ ]{\text{Thm.\ \ref{thm:cdl}}}\;
  \underbrace{\|Z^\delta-Z^\delta_\theta\|}_{\text{error on }[\delta,T]}
\]
\end{remark}

\subsection{ Lipschitz score functions}

When the score function is approximated by a neural network, one can easily enforce  Lipschitz continuity by using smooth activation functions and 'clipping' (bounding) the weights \cite{hanxu2024}.
In this case,
one can bypass the above estimates entirely for a linear,   rather than logarithmic, rate.
\begin{proposition}[Gr\"onwall stability]\label{prop:gronwall}
Assume the \emph{time-dependent} bounds
\[
  \opnorm{\nabla^2\log p_s}\le M(s),\qquad \|\nabla f(\cdot,s)\|_\infty\le F(s),
  \qquad F,\ \sigma^2M\in L^1(\delta,T).
\]
Then $v(\cdot,s)$ is Lipschitz with constant
$\mathrm{Lip}_s(v)\le F(s)+\tfrac12\sigma(s)^2M(s)$, and for every $x$
\begin{equation}\label{eq:gronrate}
  \sup_{s\in[\delta,T]}\big|Z^\delta(s,x)-Z^\delta_\theta(s,x)\big|
  \;\le\;\underbrace{\exp\!\Big(\!\int_\delta^T\!\!\big(F(s)+\tfrac12\sigma(s)^2M(s)\big)\dd s\Big)}_{=:\,\mathcal A_\delta}
  \cdot
  \tfrac12\int_\delta^T\!\sigma(s)^2\big\|\nabla\log p_s-s_\theta(\cdot,s)\big\|_{L^\infty}\dd s .
\end{equation}
\end{proposition}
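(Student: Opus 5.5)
The argument is the classical Cauchy--Lipschitz--Gr\"onwall comparison, carried out pathwise for each starting point $x$; no regular-Lagrangian-flow machinery is needed. First we show that $v(\cdot,s)$ is Lipschitz. Under the stated bounds $\nabla f(\cdot,s)\in L^\infty$ and $\nabla^2\log p_s\in L^\infty$, so both $f(\cdot,s)$ and $\nabla\log p_s$ have Lipschitz representatives, with constants $F(s)$ and $M(s)$. The mean-value inequality along segments then gives
\[
|v(y,s)-v(z,s)|\le |f(y,s)-f(z,s)|+\tfrac12\sigma(s)^2|\nabla\log p_s(y)-\nabla\log p_s(z)|\le\big(F(s)+\tfrac12\sigma(s)^2M(s)\big)|y-z| .
\]
Since $F+\tfrac12\sigma^2M\in L^1(\delta,T)$ and $v$ has linear growth, the Carath\'eodory version of Cauchy--Lipschitz yields a unique absolutely continuous solution $s\mapsto Z^\delta(s,x)$ for \emph{every} $x$. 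This solution coincides $\Leb$-a.e.\ with the regular Lagrangian flow, by the uniqueness part of Theorem~\ref{thm:DL}.

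Next, fix $x$ and set $e(s):=|Z^\delta(s,x)-Z^\delta_\theta(s,x)|$, so that $e(\delta)=0$. We take $Z^\delta_\theta(\cdot,x)$ to be any absolutely continuous integral curve of $v_\theta$ from $x$. No regularity of $s_\theta$ beyond the existence of such curves is used; if needed this is supplied by (A). We split the velocity difference by adding and subtracting $v(Z^\delta_\theta,s)$:
\[
v(Z^\delta,s)-v_\theta(Z^\delta_\theta,s)=\big[v(Z^\delta,s)-v(Z^\delta_\theta,s)\big]+\tfrac12\sigma(s)^2\big[s_\theta-\nabla\log p_s\big](Z^\delta_\theta,s).
\]
Here the $f$-parts cancel exactly, since $v-v_\theta=\tfrac12\sigma^2(s_\theta-\nabla\log p)$. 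This choice of the intermediate point matters. It places the Lipschitz constant on the \emph{true} field $v$, whose regularity we control, and evaluates the score error at the learned trajectory, where only its sup norm is needed. Integrating gives
\[
e(t)\le\int_\delta^t L(s)e(s)\,\dd s+\beta(t),\qquad L(s):=F(s)+\tfrac12\sigma(s)^2M(s),
\]
where $\beta(t):=\tfrac12\int_\delta^t\sigma^2\|\nabla\log p_s-s_\theta\|_{L^\infty}\dd s$ is nondecreasing in $t$.

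Finally, the integral form of Gr\"onwall's inequality with a nondecreasing inhomogeneity gives $e(t)\le\beta(t)\exp\int_\delta^tL\le\beta(T)\,\mathcal A_\delta$. Taking the supremum over $t\in[\delta,T]$ yields \eqref{eq:gronrate}.

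The main obstacle is a technical point, not the estimate itself. The bounds on $\nabla^2\log p_s$ and $\nabla f$ are a priori only a.e.\ bounds on weak derivatives, and Lipschitz continuity is needed pointwise along trajectories. I would handle this by passing to continuous representatives, which are Lipschitz by the $W^{1,\infty}=\mathrm{Lip}$ identification, before evaluating $v$ on curves. If the right-hand side of \eqref{eq:gronrate} is infinite, there is nothing to prove.
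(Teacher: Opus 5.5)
Your proof is correct and is the paper's argument: the same splitting $\dot e\le \mathrm{Lip}_s(v)\,e+\|(v-v_\theta)(\cdot,s)\|_{L^\infty}$, with the Lipschitz constant placed on the true field $v$, followed by Gr\"onwall from $e(\delta)=0$ using the normalisation at time $\delta$. You make explicit what the paper leaves implicit (Lipschitz representatives, Carath\'eodory well-posedness for every $x$, and the form of Gr\"onwall with a monotone inhomogeneity), but your remark that no regularity of $s_\theta$ is used is slightly too strong: bounding $|(v-v_\theta)(Z^\delta_\theta(s,x),s)|$ by an essential supremum along a single curve needs either continuity of $s_\theta(\cdot,s)$ (as for a network) or, otherwise, only yields the estimate for a.e.\ $x$ via the compressibility of $Z^\delta_\theta$.
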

\begin{proof}
$\dot{e}\le\mathrm{Lip}_s(v)\,e+\|(v-v_\theta)(\cdot,s)\|_\infty$ pointwise. Applying
Gr\"onwall with $e(\delta,x)=0$, which again uses the normalisation
$Z^\delta(\delta,x)=Z^\delta_\theta(\delta,x)=x$ yields the estimate.
\end{proof}

\begin{remark}\label{rem:trade}
Proposition~\ref{prop:gronwall} leads to a linear error estimate, far better than
\eqref{eq:cdlrate}, but it pays for this in the amplification factor $\mathcal A_\delta$ of
\eqref{eq:gronrate}. The size of $\mathcal A_\delta$ depends  on whether $M$ is
taken uniform or time-dependent.

\emph{With a uniform bound.} Proposition~\ref{prop:earlystop} gives, for early-stopped
compactly supported data, $\sup_{s\ge\delta}M(s)=O(R^2\varsigma_{\min}^{-4})$ --- \emph{not}
$O(\varsigma_{\min}^{-2})$. Inserting this constant into
\eqref{eq:gronrate} yields only
\[
  \mathcal A_\delta\;\lesssim\;\exp\!\big(C R^2\varsigma_{\min}^{-4}\big)
  \;=\;\exp\!\big(CR^2\delta^{-2}\big)
  \qquad\text{for }\varsigma_s^2=s .
\]

\emph{With the time-dependent bound.} Keeping $M(s)$ inside the integral and using the
pointwise form of \eqref{eq:esS2}, $M(s)\le\varsigma_s^{-2}+m_{\max}^2R^2\varsigma_s^{-4}$,
the exponent becomes, for the variance-exploding model with $\sigma\equiv\sigma_0$ and
$\varsigma_s^2=\sigma_0^2s$,
\[
  \tfrac12\!\int_\delta^T\!\sigma_0^2M(s)\,\dd s
  \;\le\;\tfrac12\log\frac T\delta+\frac{R^2}{2\sigma_0^{2}}\Big(\frac1\delta-\frac1T\Big),
  \qquad\text{since }\int_\delta^T\! s^{-2}\dd s=\delta^{-1}-T^{-1},
\]
so that $\mathcal A_\delta\lesssim(T/\delta)^{1/2}\exp\!\big(CR^2\delta^{-1}\big)$. The
$\delta^{-2}$ in the exponent is therefore an artefact of the uniform bound: the true rate is
$\exp(CR^2/\delta)$, because $\varsigma_s^{-4}$ is integrable in $s$ away from the origin
while its supremum is not.

Either way the factor blows up as the sampler approaches the data end, which is the analytic
reason why the final reverse steps are the delicate ones. 

The two regimes are complementary and neither is fully satisfactory:
Crippa--De~Lellis \cite{CDL2008} gives an $L^1$ spatial error and mild regularity at the price of a
logarithmic rate, while the Gr\"onwall estimate \eqref{eq:gronrate} gives a linear rate with an $L^\infty$ error and an
exponentially large constant. Neither delivers a bound that is quantitatively useful at
realistic values of $\mathcal{E}_\theta$. The logarithmic rate is not an artefact of the
proof: it is known to be essentially optimal for   Sobolev velocity fields \cite{lellis2008}.
\end{remark}

\begin{remark}[Error analysis for generative models]\label{rem:missing}
Our error estimate focuses on the impact of the estimation error for the score $s$. There are other sources of error which affect the accuracy of score-based generative models \cite{hanxu2024}. 

\emph{Early stopping:} Theorem~\ref{thm:cdl} compares two flows
\emph{normalised at time $\delta$}, and therefore bounds only the error generated on
$[\delta,T]$. Writing $Z(T,\cdot)=Z^\delta(T,\cdot)\circ Z(\delta,\cdot)$ and similarly for
$Z_\theta$, the missing piece is the propagation through $Z^\delta_\theta(T,\cdot)$ of the
discrepancy $|Z(\delta,x)-Z_\theta(\delta,x)|$ already accumulated on $[0,\delta]$.

\emph{Time discretisation:} everything above compares exact flows of $v$ and
$v_\theta$, whereas a sampler integrates \eqref{eq:pfodetheta} numerically, and the
integrator error must be composed with \eqref{eq:cdlrate}.

\emph{Truncation error:} $\mathcal{E}_v$ is measured on $B_R$ with $R$ growing exponentially in
$\|G\|_{L^1}$, so the tail contribution is not controlled by a risk computed under $p_s$.

\emph{Finite horizon:} the initial condition of the reverse sampler is $\mathcal N(0,\varsigma_T^2I)$, which is the invariant distribution,
rather than $p_T$. That mismatch is a separate additive term, controlled by the mixing of the
forward process and \emph{not} by anything above.

Han et al.\cite{hanxu2024} provide estimates for some of these error terms in the case where the score is estimated using a neural network with one hidden layer.
\end{remark}

\section{Implications for generative modeling}\label{sec.discussion}
We  conclude by outlining some implications of our results for generative modeling with diffusion models. 
\begin{enumerate}
\item {\bf A well-posed forward diffusion does not automatically yield a well-posed probability-flow ODE}
The Fokker--Planck density flow may be uniquely determined under relatively weak assumptions
on the drift and diffusion coefficient, without substantial regularity of the score. By contrast,
the probability-flow velocity field
\[
v(x,t)
=
f(x,t)-\frac{1}{2}\sigma(t)^2\nabla\log p_t(x)
\]
inherits the singularities and irregularities of the score.
Consequently, existence and uniqueness of the deterministic flow require a separate analysis.
In particular, one must verify (Sobolev or \(BV\)) regularity, divergence bounds,
and growth conditions for \(v\). The statement that the probability-flow ODE has the same
one-time marginals as the SDE is therefore contingent on various assumptions, rather than a purely
formal consequence of rewriting the Fokker--Planck equation.
\item{\bf  Smoothing does not remove the singularity of the data endpoint}
For compactly supported or lower-dimensional data, the forward diffusion may produce a
strictly positive and smooth density \(p_t\) for  \(t>0\), while the score
$ \nabla\log p_t $
becomes singular as \(t\downarrow 0\).
Thus, smoothness at every  $t>0$ does not imply uniform regularity up to the data
endpoint. The marginal density flow may remain well posed even though a global regular
Lagrangian flow generated by the probability-flow velocity fails to exist on an interval
starting at \(t=0\).
This shows why the  reverse-time steps closest
to the data distribution are often the most delicate.
\item {\bf Early stopping is a regularization mechanism, not just a numerical device.}
For every fixed \(\delta>0\), the score estimates required by the regular Lagrangian-flow theory
may hold on the truncated interval
$
[\delta,T].$
One then proceeds as follows:
\[
\mu_0
\overset{\text{forward diffusion}}{\longrightarrow}
p_\delta\,\mathcal L^d
\overset{\text{probability-flow ODE}}{\longrightarrow}
p_t\,\mathcal L^d,
\qquad t\in[\delta,T].
\]

In the reverse direction, the deterministic probability-flow sampler is justified from
\(p_T\) down to \(p_\delta\). Reaching the original data law at \(t=0\), especially when that
law is singular, requires an additional limiting argument or a separate terminal denoising
step.
There is  a trade-off: decreasing \(\delta\) moves the sampler closer to the original
data distribution, but generally worsens the regularity constants entering the flow analysis.

\item {\bf Reverse sampling and invertibility require stronger hypotheses than forward transport}
A one-sided divergence bound may be sufficient to construct a forward regular Lagrangian
flow and to control its compression. Running the same flow backwards requires the complementary
one-sided estimate
\[
[\nabla\!\cdot v]^+
\in
L^1\!\left((0,T);L^\infty(\mathbb R^d)\right).
\]
Accordingly, the following assertions are stronger than forward marginal matching:
\begin{itemize}
    \item existence of a well-posed deterministic reverse sampler;
    \item almost-everywhere invertibility of the flow;
    \item deterministic encoding and decoding as inverse operations;
    \item likelihood evaluation through a change-of-variables formula.
\end{itemize}

These conclusions require two-sided control of the divergence. A forward probability-flow ODE may therefore be well posed even when the reverse
flow is not.
\item {\bf A small score-matching loss does not by itself imply a stable sampler}
A standard score-matching objective controls an error of the form
\[
\mathcal{E}_\theta^2
=
\int_\delta^T
\int_{\mathbb R^d}
p_t(x)
\left|
s_\theta(x,t)-\nabla\log p_t(x)
\right|^2
\,dx\,dt.
\]
This is a density-weighted error. By contrast, the flow-stability estimate generally requires an
unweighted spatial norm of the velocity error.
Passing from the weighted error to an unweighted local error introduces an inverse-density
quantity such as
\[
\kappa_{\delta,R}
=
\int_\delta^T
\int_{B_R}
\frac{dx\,dt}{p_t(x)}.
\]
Consequently, approximation errors in low-density regions may contribute very little to the
training loss while still affecting the global ODE vector field and its trajectories.

This reveals a mismatch between the metric used for training and the metric controlling
deterministic sampler stability. Score fitting alone may therefore be insufficient unless
it is combined with localisation, low-density control, or regularisation aligned with the
velocity-field error. This point differentiates the PF-ODE from the time-reversed diffusion, which is controlled by the score matching error.
\item {\bf Architectural regularity is part of sampler correctness}
The learned score enters the deterministic sampler through the velocity field
\[
v_\theta(x,t)
=
f(x,t)-\frac{1}{2}\sigma(t)^2s_\theta(x,t).
\]
A small score-matching loss controls the approximation \(s_\theta\) only in a
density-weighted norm, typically
$
L^2\!\bigl(p).$
It does not, by itself, control the spatial derivatives, divergence, growth, or
compressibility of the learned velocity field.
Accordingly the analysis requires
conditions such as
\[
s_\theta
\in
L^1\!\left(
[\delta,T];
W^{1,p}_{\mathrm{loc}}
(\mathbb{R}^d;\mathbb{R}^d)
\right),
\qquad p>1,
\]
on $[\delta,T]$ together with the one-sided divergence bound
\[
[\nabla\!\cdot v_\theta]^-
\in
L^1\!\left(
[\delta,T];
L^\infty(\mathbb{R}^d)
\right),
\]
and a linear-growth condition of the form
\[
|s_\theta(x,t)|
\le
A_\theta(t)(1+|x|),
\qquad
A_\theta\in L^1(\delta,T).
\]
These assumptions play different roles:
\begin{itemize}
    \item Sobolev regularity enables to   compare the exact and learned flows.
    \item The one-sided divergence bound controls compression and is essential
    for the existence, uniqueness, and stability of the corresponding regular
    Lagrangian flow.
    \item The linear-growth condition prevents trajectories from escaping to
    infinity during the interval on which the flows are compared.
\end{itemize}

The practical consequence is that a neural score model is not 
certified to yield a stable probability-flow ODE sampler merely because its
score-matching loss is small. {\it The architecture or regularisation procedure must
also impose, or one must separately verify, suitable Jacobian, divergence,
Sobolev, and growth bounds.}

In particular, a small score-matching loss
does not by itself imply an accurate simulation of the flow via the ODE.
For convergence of a sequence of learned samplers, the relevant regularity,
growth, and compressibility constants must remain uniformly controlled across
the sequence of models.
Spectral normalisation and Jacobian-based penalties may help enforce such
control. 
\end{enumerate}
Some of these remarks are valid more generally for other ODE-based continuous flows such as flow matching \cite{lipman2022}, which depend on the existence of corresponding Lagrangian flows.

%% file: appendices.tex
For $\eta\in(0,1]$, set
\[
\beta_\eta(r)
:=
(r+\eta)\log(r+\eta)-\eta\log\eta,
\qquad r\ge0.
\]
Then $\beta_\eta\in C^\infty([0,\infty))$ is convex and
\begin{equation}
\beta_\eta(0)=0,
\qquad
r\beta_\eta''(r)
=
\frac{r}{r+\eta}
\uparrow1
\quad\text{as }\eta\downarrow0,
\label{eq.15}
\end{equation}
while
\[
B_\eta(r)
:=
r\beta_\eta'(r)-\beta_\eta(r)
=
r-\eta\log\!\left(1+\frac r\eta\right).
\]
Since $\log(1+u)\le u$ for $u\ge0$, we have
$0\le B_\eta(r)\le r.$
We  use the one-sided estimates
\begin{equation}
r\log r
\le
\beta_\eta(r)
\le
r\log(1+r)+r.
\label{eq.16}
\end{equation}
Fix $\varepsilon>0$ and $\eta\in(0,1]$, and let $\psi_R$ be the
logarithmic cutoff defined in \eqref{eq:logcut}. Since $p^\varepsilon$ is a
classical solution and
\[
0\le p^\varepsilon
\le
\|p_0\|_{L^\infty}e^\theta,
\]
the function $\beta_\eta'(p^\varepsilon)$ is bounded for fixed
$\eta$. Multiplying the regularised equation by
$\beta_\eta'(p^\varepsilon)\psi_R$ and integrating over
$\mathbb R^d$ gives
\begin{align*}
\frac{d}{ds}
\int_{\mathbb R^d}
\beta_\eta(p^\varepsilon)\psi_R\,dx
&=
-\frac{\sigma(s)^2}{2}
\int_{\mathbb R^d}
\beta_\eta''(p^\varepsilon)
|\nabla p^\varepsilon|^2\psi_R\,dx
\\
&\quad
-\int_{\mathbb R^d}
B_\eta(p^\varepsilon)
(\nabla\!\cdot f^\varepsilon)\psi_R\,dx
+
\mathcal R_{\eta,R}^\varepsilon(s),
\end{align*}
where
\[
\mathcal R_{\eta,R}^\varepsilon(s)
:=
\int_{\mathbb R^d}
\beta_\eta(p^\varepsilon)
f^\varepsilon\cdot\nabla\psi_R\,dx
+
\frac{\sigma(s)^2}{2}
\int_{\mathbb R^d}
\beta_\eta(p^\varepsilon)\Delta\psi_R\,dx.
\]

The divergence term is estimated only from above. Since
$B_\eta\ge0$, $B_\eta(r)\le r$, and $0\le\psi_R\le1$,
\begin{align}
-\int_{\mathbb R^d}
B_\eta(p^\varepsilon)
(\nabla\!\cdot f^\varepsilon)\psi_R\,dx
&\le
\bigl\|[\nabla\!\cdot f^\varepsilon(\cdot,s)]^-\bigr\|_{L^\infty}
\int_{\mathbb R^d}B_\eta(p^\varepsilon)\,dx
\nonumber\\
&\le
\bigl\|[\nabla\!\cdot f(\cdot,s)]^-\bigr\|_{L^\infty}.
\label{eq.17}
\end{align}
Here we used mass conservation and the mollification estimate from
Step~$7(a)$,
\[
\bigl\|[\nabla\!\cdot f^\varepsilon]^-\bigr\|_{L^\infty}
\le
\bigl\|[\nabla\!\cdot f]^-\bigr\|_{L^\infty}.
\]
No global integrability of
$(\nabla\!\cdot f^\varepsilon)^+p^\varepsilon$ is required or
asserted.

For fixed $\eta$, the facts that $\beta_\eta(0)=0$ and
$p^\varepsilon$ is uniformly bounded imply
\[
|\beta_\eta(r)|
\le
c_\eta r,
\qquad
0\le r\le
\|p_0\|_{L^\infty}e^\theta,
\]
where
\[
c_\eta
:=
\sup_{0\le r\le\|p_0\|_{L^\infty}e^\theta}
|\beta_\eta'(r)|<\infty.
\]
Consequently, the two terms in
$\mathcal R_{\eta,R}^\varepsilon$ are bounded by $c_\eta$ times the
corresponding quantities estimated in Step~$7(c)$. The defining
properties of the logarithmic cutoff therefore give
\[
\int_0^T
\bigl|\mathcal R_{\eta,R}^\varepsilon(s)\bigr|\,ds
\longrightarrow0
\qquad\text{as }R\to\infty
\]
for each fixed $\eta$ and $\varepsilon$.

Integrating the renormalised identity over $[0,T]$, using
\eqref{eq.17}, and letting $R\to\infty$, we obtain
\begin{align}
\frac12
\int_0^T\sigma(s)^2
\int_{\mathbb R^d}
\beta_\eta''(p_s^\varepsilon)
|\nabla p_s^\varepsilon|^2\,dx\,ds
&\le
\Ent_\eta(p_0^\varepsilon)
-
\Ent_\eta(p_T^\varepsilon)
\nonumber\\
&\quad+
\bigl\|[\nabla\!\cdot f]^-\bigr\|_
{L^1([0,T];L^\infty)},
\label{eq.18}
\end{align}
where
\[
\Ent_\eta(q)
:=
\int_{\mathbb R^d}\beta_\eta(q(x))\,dx.
\]
By the upper bound in \eqref{eq.16}, mass conservation, and
\[
\|p_0^\varepsilon\|_{L^\infty}
\le
\|p_0\|_{L^\infty},
\]
we have
\begin{align*}
\Ent_\eta(p_0^\varepsilon)
&\le
\int_{\mathbb R^d}
p_0^\varepsilon
\log(1+p_0^\varepsilon)\,dx
+
\int_{\mathbb R^d}p_0^\varepsilon\,dx
\le
\log\!\bigl(1+\|p_0\|_{L^\infty}\bigr)+1.
\end{align*}
By the lower bound in \eqref{eq.16},
\[
-\Ent_\eta(p_T^\varepsilon)
\le
-\Ent(p_T^\varepsilon).
\]
The Gaussian entropy comparison gives, for every probability density
$q$ with finite second moment,
\[
\Ent(q)
\ge
-C_d\left(
1+\int_{\mathbb R^d}|x|^2q(x)\,dx
\right).
\]
Hence
\[
-\Ent_\eta(p_T^\varepsilon)
\le
C_d\left(
1+\int_{\mathbb R^d}|x|^2p_T^\varepsilon(x)\,dx
\right)
\le
C_d(1+M).
\]

Here $M<\infty$ is supplied by Step~$7(c')$. That step is justified
using It\^o's formula for the regularised SDE; equivalently, one may
first test against bounded $C^2$ truncations of $|x|^2$, derive an
estimate uniform in the truncation parameter, and then apply monotone
convergence. In particular, the argument does not rely on tightness
alone. Explicitly,
\[
\frac{d}{ds}
\int_{\mathbb R^d}|x|^2p_s^\varepsilon(x)\,dx
=
2\int_{\mathbb R^d}
x\cdot f^\varepsilon(x,s)p_s^\varepsilon(x)\,dx
+
d\,\sigma(s)^2,
\]
and the linear-growth estimate
\[
|f^\varepsilon(x,s)|
\le
2C_f(s)(1+|x|)
\]
gives the uniform second-moment bound by Gr\"onwall's inequality.

Combining these estimates with \eqref{eq.18} yields
\begin{align}
\frac12
\int_0^T\sigma(s)^2
\int_{\mathbb R^d}
\beta_\eta''(p_s^\varepsilon)
|\nabla p_s^\varepsilon|^2\,dx\,ds
&\le
\log\!\bigl(1+\|p_0\|_{L^\infty}\bigr)
+1
+C_d(1+M)
\nonumber\\
&\quad+
\bigl\|[\nabla\!\cdot f]^-\bigr\|_
{L^1([0,T];L^\infty)}.
\label{eq.19}
\end{align}

\medskip
\noindent
\emph{Letting $\eta\downarrow0$.}
Since
\[
\beta_\eta''(r)=\frac1{r+\eta}
\uparrow\frac1r
\qquad\text{as }\eta\downarrow0
\]
for every $r>0$, and $p_s^\varepsilon>0$ for $s>0$, monotone
convergence gives
\[
\int_{\mathbb R^d}
\beta_\eta''(p_s^\varepsilon)
|\nabla p_s^\varepsilon|^2\,dx
\uparrow
I(p_s^\varepsilon).
\]
Using $\sigma(s)^2\ge\epsilon^2$ in \eqref{eq.19}, we therefore obtain
\begin{align}
\frac{\epsilon^2}{2}
\int_0^T I(p_s^\varepsilon)\,ds
&\le
\log\!\bigl(1+\|p_0\|_{L^\infty}\bigr)
+1
+C_d(1+M)
\nonumber\\
&\quad+
\bigl\|[\nabla\!\cdot f]^-\bigr\|_
{L^1([0,T];L^\infty)},
\label{eq.20}
\end{align}
uniformly in $\varepsilon$.

\medskip
\noindent
\emph{Letting $\varepsilon\downarrow0$.}
By Step~$7(e)$,
\[
p^\varepsilon
\longrightarrow p
\quad\text{strongly in }
L^2\bigl([0,T];L^2(K)\bigr)
\]
for every compact $K\subset\mathbb R^d$. Combined with the uniform
tightness from Step~$7(c)$ and the uniform $L^\infty$ bound
\eqref{eq:linfty}, this yields
\[
p^\varepsilon
\longrightarrow p
\quad\text{strongly in }
L^1\bigl([0,T]\times\mathbb R^d\bigr).
\]
Moreover, the global energy estimate from Step~$7(d)$ gives, after
passing to a subsequence if necessary,
\[
\nabla p^\varepsilon
\rightharpoonup
\nabla p
\quad\text{weakly in }
L^2\bigl([0,T]\times\mathbb R^d;\mathbb R^d\bigr).
\]
Define the   function
\[
U(\mu,\xi)
:=
\begin{cases}
\dfrac{|\xi|^2}{\mu},
& \mu>0,\\[1.2ex]
0,
& (\mu,\xi)=(0,0),\\
+\infty,
& \mu=0,\ \xi\neq0.
\end{cases}
\]
Then $U$ is convex and lower semicontinuous on
$[0,\infty)\times\mathbb R^d$. By the standard
lower-semicontinuity theorem for convex integral functionals,
\[
\int_0^T\int_{\mathbb R^d}
U(p_s,\nabla p_s)\,dx\,ds
\le
\liminf_{\varepsilon\downarrow0}
\int_0^T\int_{\mathbb R^d}
U(p_s^\varepsilon,\nabla p_s^\varepsilon)\,dx\,ds.
\]
Equivalently,
\[
\int_0^T I(p_s)\,ds
\le
\liminf_{\varepsilon\downarrow0}
\int_0^T I(p_s^\varepsilon)\,ds.
\]
Passing to the lower limit in \eqref{eq.20} proves \eqref{eq.14}.
Therefore
\[
\int_0^T I(p_s)\,ds<\infty,
\]
which is precisely $(S4')$.
\section{Proof of Proposition \ref{prop:gradient} }\label{app.5.4}
We first use \eqref{eq:confine} to estimate the tail and moments of $\mu$.
For $|x|=r\ge1$ write $x=r\theta$ with
$|\theta|=1$. By \eqref{eq:confine},
\[
  \frac{\dd}{\dd r}V(r\theta)=\big\langle\nabla V(r\theta),\theta\big\rangle
  =\frac1r\big\langle\nabla V(r\theta),r\theta\big\rangle\;\ge\;ar-\frac br ,
\]
and integrating from $1$ to $r$ gives
$V(x)\ge\tfrac a2|x|^2-b\log|x|-c_1$ with
$c_1:=\tfrac a2-\inf_{|y|=1}V(y)<\infty$. Hence, for $|x|\ge1$,
\[ \mu(x)=e^{-2V(x)/\sigma_0^2}\;\le\;e^{2c_1/\sigma_0^2}\,|x|^{2b/\sigma_0^2}\,
  e^{-a|x|^2/\sigma_0^2},
\]
so $\int(1+|x|^2)\,\mu\,\dd x<\infty$; in particular $\mu$ is finite and all polynomial
moments of $\mu$ converge.
Since $p_0=\mu h_0\le C_0\,\mu$, we obtain
$\int|x|^2p_0\,\dd x\le C_0\int|x|^2\mu\,\dd x<\infty$.

By It\^o's formula applied to $|X_s|^2$ 
\[
  \frac{\dd}{\dd s}\E|X_s|^2=-2\,\E\big\langle\nabla V(X_s),X_s\big\rangle+d\sigma_0^2
  \;\le\;-2a\,\E|X_s|^2+2b+d\sigma_0^2 ,
\]
using \eqref{eq:confine} again. Gr\"onwall then gives
$\E|X_s|^2\le e^{-2as}\E|X_0|^2+\tfrac{2b+d\sigma_0^2}{2a}(1-e^{-2as})$, which is
\eqref{eq:confmoment}. Note that  \eqref{eq:confine} is crucial here; see
Remark~\ref{rem:cauchy}.

We now use Cole-Hopf. $\mu$ is invariant, and $h_s:=p_s/\mu$ solves
$\partial_sh=\tfrac{\sigma_0^2}{2}\Delta h-\nabla V\!\cdot\!\nabla h$, i.e.\ $h_s=P_sh_0$
for the Markov semigroup of $\dd X=-\nabla V(X)\dd s+\sigma_0\dd W$. The Markov property gives
$c_0\le h_s\le C_0$, hence $p_s\ge c_0\mu>0$ and
$p_s\le C_0e^{-2\inf V/\sigma_0^2}$; together with the results above this shows (S1).

Because the noise is additive, the Jacobian
$J_s=\nabla_xX^x_s$ solves the pathwise ODE $\dot J_s=-\nabla^2V(X_s)J_s$, so
$\|J_s\|\le e^{\kappa_+s}$; the second variation $K_s=\nabla^2_xX^x_s$ solves
$\dot K_s=-\nabla^2V(X_s)K_s-\nabla^3V(X_s)[J_s,J_s]$ with $K_0=0$, so
$\|K_s\|\le L_3Te^{2\kappa_+T}$. Differentiating $P_sh_0(x)=\E[h_0(X^x_s)]$,
\[
  \|\nabla h_s\|_\infty\le e^{\kappa_+T}\|\nabla h_0\|_\infty,\quad
  \|\nabla^2h_s\|_\infty\le e^{2\kappa_+T}\big(\|\nabla^2h_0\|_\infty
  +L_3T\|\nabla h_0\|_\infty\big).
\]
Since
$\nabla^2\log p_s=-\tfrac{2}{\sigma_0^2}\nabla^2V+\tfrac{\nabla^2h_s}{h_s}
-\tfrac{\nabla h_s\otimes\nabla h_s}{h_s^2}$, we obtain $\opnorm{\nabla^2\log p_s}\le M$,
hence (S2), (S3) and --- $\nabla\log p_s$ being globally $M$-Lipschitz --- also (S4), exactly
as in Proposition~\ref{prop:linear}.

\section{Density estimates used in Section \ref{sec:stability}}
\subsection{Proof of Lemma \ref{lem:kappalinear}} \label{app.linear}
As in Step 1 of Proposition~\ref{prop:linear}, variation of constants gives
$p_s=p_0^{(s)}*\mathcal N(0,\varsigma_s^2I)$ with $p_0^{(s)}(y)=m_s^{-d}p_0(y/m_s)$, the noise
being isotropic because $\sigma$ is scalar and $x$-independent; and $\varsigma_s^2>0$ for
$s>0$ by \textup{(E1)}, so $\varsigma_{\min}^2>0$. Restricting the convolution integral to
$y\in m_sB_\rho$, on which $p_0^{(s)}$ has total mass $\int_{B_\rho}p_0=\mu_\rho$ and
$|x-y|\le R+m_{\max}\rho$,
\[
  p_s(x)\;\ge\;\int_{m_sB_\rho}p_0^{(s)}(y)\,\frac{e^{-|x-y|^2/2\varsigma_s^2}}{(2\pi\varsigma_s^2)^{d/2}}\,\dd y
  \;\ge\;\frac{\mu_\rho}{(2\pi\varsigma_{\max}^2)^{d/2}}\,
  e^{-(R+m_{\max}\rho)^2/2\varsigma_{\min}^2},
\]
which is \eqref{eq:kappalower}. Integrating $1/p_s$ over $[\delta,T]\times B_R$ gives the
bound on $\kappa_{\delta,R}$.

\subsection{Proof of Lemma \ref{lem:kappabdd} }\label{app.nonlinear}
Under \textup{(E1)--(E2)} and $f\in L^\infty\big([0,T]\times\R^d;\R^d\big)$ the operator in \eqref{eq:fp} is uniformly
parabolic in divergence form with bounded measurable coefficients, so Aronson's two-sided
estimates \cite{aronson1967} apply to its fundamental solution $\Gamma(s,x;0,y)$: there are
$c_1,c_2>0$, depending only on $d$, $\epsilon$ and $\|f\|_{L^\infty}$, with
\[
  \Gamma(s,x;0,y)\;\ge\;\frac{c_1}{s^{d/2}}\,\exp\!\Big(-\frac{c_2|x-y|^2}{s}\Big).
\]
Hence, for $s\in[\delta,T]$ and $|x|\le R$, restricting the representation
$p_s(x)=\int\Gamma(s,x;0,y)p_0(y)\dd y$ to $y\in B_\rho$,
\[
  p_s(x)\;\ge\;\mu_\rho\,\inf_{|y|\le\rho}\Gamma(s,x;0,y)
  \;\ge\;\frac{c_1\,\mu_\rho}{T^{d/2}}\,\exp\!\Big(-\frac{c_2(R+\rho)^2}{\delta}\Big)\;>\;0 ,
\]
and integrating $1/p_s$ over $[\delta,T]\times B_R$ gives the claim.

%% file: MimickingODE.bbl
\begin{thebibliography}{10}

\bibitem{Amb2004}
{\sc L.~Ambrosio}, {\em Transport equation and {C}auchy problem for {BV} vector
  fields}, Inventiones Mathematicae, 158 (2004), pp.~227--260.

\bibitem{Ambrosio2008}
{\sc L.~Ambrosio}, {\em Transport equation
  and cauchy problem for non-smooth vector fields}, in Calculus of Variations
  and Non-Linear Partial Differential Equations, vol.~1927 of Lecture Notes in
  Mathematics, Springer, 2008, pp.~1--41.

\bibitem{AGS}
{\sc L.~Ambrosio, N.~Gigli, and G.~Savar{\'e}}, {\em Gradient Flows in Metric
  Spaces and in the Space of Probability Measures}, Lectures in Mathematics ETH
  Z{\"u}rich, Birkh{\"a}user Verlag, Basel, 2nd~ed., 2008.

\bibitem{AT2014}
{\sc L.~Ambrosio and D.~Trevisan}, {\em Well-posedness of {Lagrangian} flows
  and continuity equations in metric measure spaces}, {Analysis \& PDE}, 7
  (2014), pp.~1179--1234.

\bibitem{anderson1982}
{\sc B.~D.~O. Anderson}, {\em Reverse-time diffusion equation models},
  Stochastic Processes and their Applications, 12 (1982), pp.~313--326.

\bibitem{aronson1967}
{\sc D.~G. Aronson}, {\em Bounds for the fundamental solution of a parabolic
  equation.}, {Bull.\ Amer.\ Math.\ Soc.}, 73 (1967), pp.~890--896.

\bibitem{BL1976}
{\sc H.~J. Brascamp and E.~H. Lieb}, {\em On extensions of the
  {B}runn--{M}inkowski and {P}r{\'e}kopa--{L}eindler theorems, including
  inequalities for log concave functions, and with an application to the
  diffusion equation}, Journal of Functional Analysis, 22 (1976), pp.~366--389.

\bibitem{BrigatiPedrotti2025}
{\sc G.~Brigati and M.~Pedrotti}, {\em Heat flow, log-concavity, and
  {Lipschitz} transport maps}, Electronic Communications in Probability, 30
  (2025), pp.~1--12.

\bibitem{chen2025}
{\sc S.~Chen, E.~Vanden-Eijnden, and Y.~Xu}, {\em Lipschitz-guided design of
  interpolation schedules in generative models}, arXiv:2509.01629,  (2025).

\bibitem{CKP2012}
{\sc S.~Cho, P.~Kim, and H.~Park}, {\em Two-sided estimates on dirichlet heat
  kernels for time-dependent parabolic operators with singular drifts in
  $c^{1,\alpha}$-domains}, Journal of Differential Equations, 252 (2012),
  pp.~1101--1145.

\bibitem{CDL2008}
{\sc G.~Crippa and C.~De~Lellis}, {\em Estimates and regularity results for the
  {D}i{P}erna--{L}ions flow}, Journal f{\"u}r die reine und angewandte
  Mathematik, 616 (2008), pp.~15--46.

\bibitem{lellis2008}
{\sc C.~De~Lellis}, {\em {ODE}s with {S}obolev coefficients: the {Eulerian} and
  the {Lagrangian} approach}, Discrete Contin. Dyn. Syst. Ser. S, 1 (2008),
  pp.~405--426.

\bibitem{DL1989}
{\sc R.~J. DiPerna and P.-L. Lions}, {\em Ordinary differential equations,
  transport theory and {S}obolev spaces}, Inventiones Mathematicae, 98 (1989),
  pp.~511--547.

\bibitem{Fig2008}
{\sc A.~Figalli}, {\em Existence and uniqueness of martingale solutions for
  {SDE}s with rough or degenerate coefficients}, Journal of Functional
  Analysis, 254 (2008), pp.~109--153.

\bibitem{FGP2010}
{\sc F.~Flandoli, M.~Gubinelli, and E.~Priola}, {\em Well-posedness of the
  transport equation by stochastic perturbation}, Inventiones Mathematicae, 180
  (2010), pp.~1--53.

\bibitem{gyongy1986}
{\sc I.~Gy{\"o}ngy}, {\em Mimicking the one-dimensional marginal distributions
  of processes having an {It{\^o}} differential}, Probability theory and
  related fields, 71 (1986), pp.~501--516.

\bibitem{hanxu2024}
{\sc Y.~Han, M.~Razaviyayn, and R.~Xu}, {\em Neural network-based score
  estimation in diffusion models: Optimization and generalization}, in
  International Conference on Learning Representations (ICLR), 2024,
  pp.~42520--42558.

\bibitem{haussmann1986}
{\sc U.~G. Haussmann and {\'E}.~Pardoux}, {\em Time reversal of diffusions},
  The Annals of Probability, 14 (1986), pp.~1188--1205.

\bibitem{huang2025convergence}
{\sc D.~Z. Huang, J.~Huang, and Z.~Lin}, {\em Convergence analysis of
  probability flow {ODE} for score-based generative models}, IEEE Transactions
  on Information Theory,  (2025).

\bibitem{iske2026}
{\sc M.~Iske and C.-B. Sch{\"o}nlieb}, {\em Expressivity of bi-{L}ipschitz
  normalizing flows: A score-based diffusion perspective}, arXiv preprint
  arXiv:2605.06172,  (2026).

\bibitem{kim2024}
{\sc D.~Kim, C.-H. Lai, W.~Liao, N.~Murata, Y.~Takida, T.~Uesaka, Y.~He,
  Y.~Mitsufuji, and S.~Ermon}, {\em Consistency trajectory models: Learning
  probability flow ode trajectory of diffusion}, in International Conference on
  Learning Representations, vol.~2024, 2024, pp.~44493--44525.

\bibitem{KR2005}
{\sc N.~V. Krylov and M.~R{\"o}ckner}, {\em Strong solutions of stochastic
  equations with singular time dependent drift}, Probability Theory and Related
  Fields, 131 (2005), pp.~154--196.

\bibitem{LBL2008}
{\sc C.~Le~Bris and P.-L. Lions}, {\em Existence and uniqueness of solutions to
  {F}okker--{P}lanck type equations with irregular coefficients},
  Communications in Partial Differential Equations, 33 (2008), pp.~1272--1317.

\bibitem{LBL2019}
{\sc C.~Le~Bris and P.-L. Lions}, {\em Parabolic Equations
  with Irregular Data and Related Issues}, {De Gruyter}, 2019.

\bibitem{lipman2022}
{\sc Y.~Lipman, R.~T. Chen, H.~Ben-Hamu, M.~Nickel, and M.~Le}, {\em Flow
  matching for generative modeling}, in International Conference on Learning
  Representations (ICLR), 2022.

\bibitem{mooney2025global}
{\sc C.~Mooney, Z.~Wang, J.~Xin, and Y.~Yu}, {\em Global well-posedness and
  convergence analysis of score-based generative models via sharp {Lipschitz}
  estimates}, in International Conference on Learning Representations (ICLR),
  vol.~2025, 2025, pp.~79960--79986.

\bibitem{peyre2026}
{\sc G.~Peyr\'e}, {\em Optimal and diffusion transports in machine learning},
  in Proceedings of the{ International Congress of Mathematicians} 2026,
  vol.~7: Invited Lectures (Sections 15–20), pp.~110--128.

\bibitem{shen2026}
{\sc Y.~Shen, Y.~Xi, J.~Ren, X.~Zhang, Y.~Wang, B.~Chen, Z.~Zheng, N.~Liang,
  C.~Wang, A.~Cai, et~al.}, {\em {ODE}-driven deterministic diffusion posterior
  sampling for sparse-view ct image reconstruction}, IEEE Transactions on
  Radiation and Plasma Medical Sciences,  (2026).

\bibitem{SohlDickstein2015}
{\sc J.~Sohl-Dickstein, E.~A. Weiss, N.~Maheswaranathan, and S.~Ganguli}, {\em
  Deep unsupervised learning using nonequilibrium thermodynamics}, in
  Proceedings of the 32nd International Conference on Machine Learning, F.~Bach
  and D.~Blei, eds., vol.~37 of Proceedings of Machine Learning Research,
  Lille, France, 2015, PMLR, pp.~2256--2265.

\bibitem{Song2021mle}
{\sc Y.~Song, C.~Durkan, I.~Murray, and S.~Ermon}, {\em Maximum likelihood
  training of score-based diffusion models}, in Advances in Neural Information
  Processing Systems, A.~Beygelzimer, Y.~Dauphin, P.~Liang, and J.~W. Vaughan,
  eds., vol.~34, 2021.

\bibitem{Song2021}
{\sc Y.~Song, J.~Sohl-Dickstein, D.~P. Kingma, A.~Kumar, S.~Ermon, and
  B.~Poole}, {\em Score-based generative modeling through stochastic
  differential equations}, in International Conference on Learning
  Representations (ICLR), 2021.

\bibitem{Stephanovitch2025ScoreRegularity}
{\sc A.~St{\'e}phanovitch}, {\em Regularity of the score function in generative
  models}, arXiv:2506.19559,  (2025).

\bibitem{Stephanovitch2026Lipschitz}
{\sc A.~St{\'e}phanovitch},  {\em Lipschitz regularity
  in flow matching and diffusion models: Sharp sampling rates and functional
  inequalities}, arXiv:2604.06065,  (2026).

\bibitem{Ver1981}
{\sc A.~J. Veretennikov}, {\em On strong solutions and explicit formulas for
  solutions of stochastic integral equations}, Mathematics of the USSR-Sbornik,
  39 (1981), pp.~387--403.

\bibitem{yang2023}
{\sc L.~Yang, Z.~Zhang, Y.~Song, S.~Hong, R.~Xu, Y.~Zhao, W.~Zhang, B.~Cui, and
  M.-H. Yang}, {\em Diffusion models: A comprehensive survey of methods and
  applications}, ACM computing surveys, 56 (2023), pp.~1--39.

\end{thebibliography}
